\documentclass[12pt, reqno]{amsart}

\usepackage{fullpage}
\usepackage{amsmath}
\usepackage{amssymb}
\usepackage{dsfont}
\usepackage{tikz}
\usepackage{tikz-3dplot}
\usepackage[english]{babel}
\usetikzlibrary{calc}
\usepackage{pgfplots}
\pgfplotsset{compat=1.18}
\usepackage{microtype}
\usepackage{mathtools}
\usepackage{extarrows}
\usepackage{booktabs}
\usepackage{hyperref}
\usepackage{cleveref}
\usepackage{subcaption}
\usetikzlibrary{backgrounds,datavisualization.formats.functions}

\newcommand{\KK}{\mathbb{K}}
\newcommand{\NN}{\mathbb{N}}

\newcommand{\RR}{\mathbb{R}}

\newcommand{\ZZ}{\mathbb{Z}}

\newcommand{\sphere}{\mathbb{S}}
\newcommand{\mc}{\mathcal}

\newcommand{\bs}{\boldsymbol}

\newcommand{\conv}{\operatorname{conv}}
\newcommand{\cone}{\text{cone}}
\newcommand{\Ehr}{\text{Ehr}}
\newcommand{\Hilb}{\text{Hilb}}
\newcommand{\initial}{\text{in}}
\newcommand{\link}{\text{lk}}

\newcommand{\Skel}{\text{Skel}}

\newtheorem{lemma}{Lemma}[section]
\newtheorem{theorem}[lemma]{Theorem}
\newtheorem{corollary}[lemma]{Corollary}
\newtheorem{proposition}[lemma]{Proposition}

\theoremstyle{definition}

\newtheorem{example}[lemma]{Example}

\newtheorem{remark}[lemma]{Remark}

\definecolor{brickred}{rgb}{0.8, 0.25, 0.33}

\numberwithin{equation}{section}

\title{Boundary $h^\ast$-vectors and unimodular triangulations}
\author[M.~Juhnke]{Martina Juhnke}
\address{Universit\"at Osnabr\"uck, Fakult\"at f\"ur Mathematik,
  Albrechtstraße 28a, 49076 Osnabr\"uck, Germany}
\email{martina.juhnke@uni-osnabrueck.de}

\author[S.~Schlie]{Steffen Schlie}
\address{Universit\"at Osnabr\"uck, Fakult\"at f\"ur Mathematik,
  Albrechtstraße 28a, 49076 Osnabr\"uck, Germany}
\email{steffen.schlie@uni-osnabrueck.de}

\begin{document}
\maketitle
\begin{abstract}
We study the Ehrhart $h^\ast$-polynomial of (the boundary of) a lattice polytope via regular unimodular triangulations and Gr\"obner degenerations of toric ideals. Our main result is a boundary analogue of the well-known Sturmfels correspondence. This allows us to connect the boundary $h^\ast$-polynomial to the $h$-polynomial of any regular unimodular triangulation, in analogy to the classical  Betke-McMullen Theorem. 

Providing a direct link between Ehrhart theory and the face enumeration of simplicial complexes, we then transfer structural results from the theory of simplicial polytopes to the setting of lattice polytopes. In particular, we derive general Dehn-Sommerville-type relations between $h^\ast(P)$ and $h^\ast(\partial P)$. Under the additional assumption of $\partial P$ admitting a regular unimodular triangulation, we recover old and prove new characterization results concerning symmetry or unimodality, as well as upper and lower bounds for coefficient-wise differences within $h^\ast(P)$.
\end{abstract}

%\tableofcontents

%%%--------------------------------------------------------------------------------%%%
%%%                           S E C T I O N     1                                  %%%
%%%--------------------------------------------------------------------------------%%%
\section{Introduction}

The Ehrhart polynomial $L_P(m)$ of a $d$-dimensional lattice polytope $P$ encodes the number of lattice points in its $m$\textsuperscript{th} integer dilation. It was shown by Ehrhart \cite{Ehrhart1962} that its generating series, called Ehrhart series, can be written as a rational function:
\[
\Ehr_P(z) = 1+\sum_{m\geq 1} L_P(m)z^m = \frac{h_P^\ast(z)}{(1-z)^{d+1}} = \frac{h_0^\ast + h_1^\ast z + \cdots + h_d^\ast z^d}{(1-z)^{d+1}}.
\]
A central question in Ehrhart theory asks for general combinatorial interpretations of the coefficient vector $h^\ast(P)= (h^\ast_0,h_1^\ast,\dots,h_d^\ast)$. As of today, there are only partial answers to this question. For instance, if $P$ is a lattice $d$-simplex, then $h_k^\ast$ indeed counts the number of lattice points with last coordinate equal to $k$ lying in a certain half-open parallelepiped \cite[Corollary 3.11]{Beck2010}. Another prime example, formulated by Betke and McMullen \cite{BetkeMcMullen1985}, holds for any lattice polytope admitting a unimodular triangulation, stating that $h^\ast_P(z)$ is in fact equal to the $h$-polynomial of any unimodular triangulation (see \Cref{th:betke-mcmullen} below).

\begin{example}[\textit{Lattice polygons}]\label{exa:latticePolygons}
    Let $P = \text{conv}\{(0,0),(-1,1),(1,3),(4,2),(5,1),(2,0)\}$. An immediate consequence from Pick's famous theorem \cite{Pick1899} is that every lattice polygon has a unimodular triangulation (see, for example, \cite[Proposition 1.1]{HaasePaffenholzPiechnikSantos2021}). One choice of such a triangulation $\mc{T}$ is depicted in Figure \ref{fig:Examples-Polygon-CrossPolytope}. Using the relation (\ref{eq:relation-h-f}) below, one computes 
    \[
    h_\mc{T}(z) = (1-z)^3 + 16z(1-z)^2 + 36z^2(1-z) + 22z^3 = 1+13z+8z^2.
    \]
    Once again, observing that $P$ has an area of 11, contains eight interior and eight boundary lattice points, Pick's theorem helps computing $h^\ast_P(z) = 1+13z+8z^2$. 
\end{example}

\begin{example}[\textit{Cross-polytopes}]\label{exa:Crosspolytope}
    Let $P = \{x \in \RR^3:|x_1|+|x_2|+|x_3| \leq 1\}$ be the octahedron in $\RR^3$. Its $h^\ast$-polynomial is given by $h^\ast_P(z) = (1+z)^3$ (see \cite[Theorem 2.7]{Beck2010}). One can triangulate $P$ unimodularly by coning over all facets of $P$ with the origin as apex (see Figure \ref{fig:Examples-Polygon-CrossPolytope}). The $h$-polynomial of this triangulation is $h_\mc{T}(z) = 1+3z+3z^2+z^3$.   
\end{example}

\begin{figure}[t]
    \begin{minipage}{0.45\textwidth}
        \centering
        \includegraphics[width=0.95\textwidth]{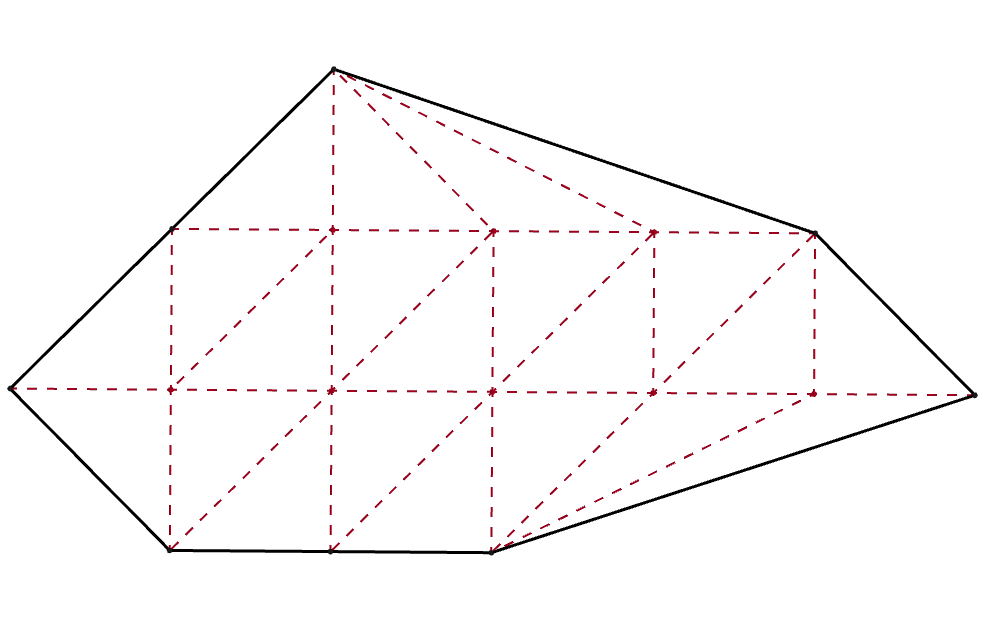}
    \end{minipage}
    \hspace{1em}
    \begin{minipage}{0.45\textwidth}
        \centering
        \includegraphics[width=1.05\textwidth]{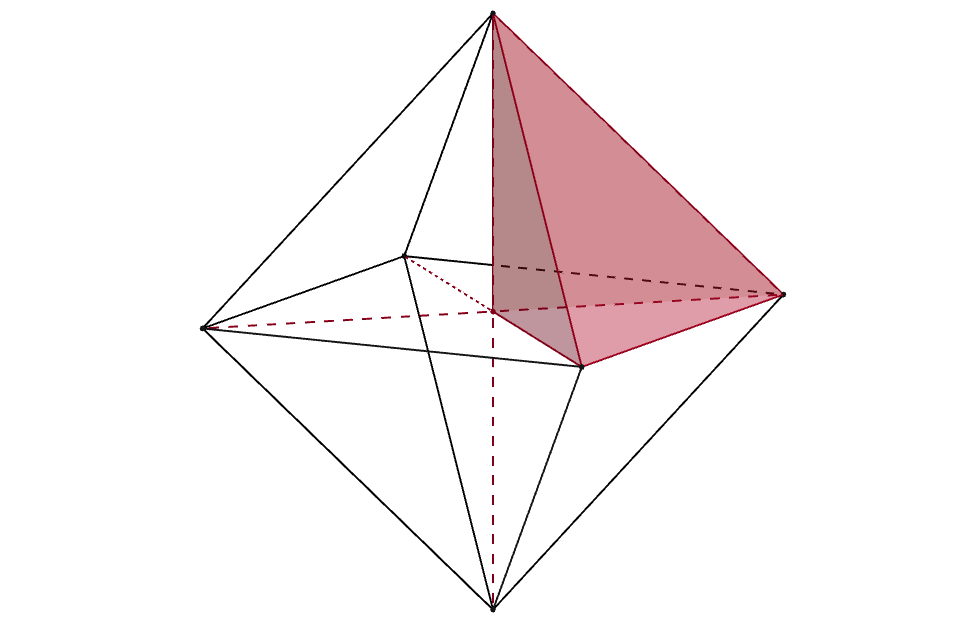}
    \end{minipage}
    \caption{Unimodular triangulations of a lattice polygon (\Cref{exa:latticePolygons}) and the $3$-dimensional octahedron (\Cref{exa:Crosspolytope}).}
    \label{fig:Examples-Polygon-CrossPolytope}
\end{figure}
\vspace{1em}
In recent years, considerable attention has been devoted to the question whether the Ehrhart $h^\ast$-polynomial has any kind of decomposition property. It was proven by Betke-McMullen \cite{BetkeMcMullen1985} as well as Stapledon \cite{Stapledon2009} for lattice polytopes (and by Beck-Braun-Vindas-Meléndez \cite{BeckBraunVindasMelendez2022} for rational polytopes) that $h_P^\ast(z)$ can be decomposed using palindromic polynomials with nonnegative coefficients. In terms of coefficients, this generalizes Stanley's famous nonnegativity theorem \cite{Stanley1980B}. Moreover, recent results by Bajo and Beck \cite{BajoBeck2023} showed that for lattice polytopes containing an interior lattice point, the $h^\ast$-polynomial of the boundary of $P$ naturally appears as a constituent part in these decompositions. In particular, $h^\ast_P(z)$ and $h^\ast_{\partial P}(z)$ coincide when $P$ is reflexive (see \cite[Corollary 5.4]{BajoBeck2023}). 

This raises the following question: What information about $h^\ast_P(z)$ is already determined by $h^\ast_{\partial P}(z)$ and which properties of $\partial P$ impose substantial restrictions on the coefficients $h_0^\ast,h_1^\ast,\dots,h_d^\ast$? Although not explicitly stated, Stapledon’s work \cite{Stapledon2009} yields, almost as a byproduct, a boundary analogue of the Betke–McMullen theorem \cite{BetkeMcMullen1985}. If $\partial P$ admits a unimodular triangulation, then $h^\ast_{\partial P}(z)$ is exactly the $h$-polynomial of this triangulation (see \Cref{th:stapledon} below). 

\begin{example}[\textit{Boundary of lattice polygons} \cite{BajoBeck2023}]
    Let $P = \text{conv}\{(0,0),(0,2),(2,0),(3,3)\}$ and denote by $\partial \mc{T}$ a unimodular triangulation of the boundary $\partial P$. Then 
    \[
    h^\ast_{\partial P}(z) = 1+4z+z^2 = (1-z)^2 + 6z(1-z)+6z^2 = h_{\partial \mc{T}}(z).
    \]
\end{example}

Both the proofs of Betke-McMullen's general theorem and Stapledon's boundary version are combinatorial in nature. They use local $h$-vectors of a triangulation, their nonnegativity and the Dehn-Sommerville relations. However, it has been known for quite some time that their results admit a natural interpretation in terms of Gr\"obner degenerations, semigroup algebras and Stanley-Reisner theory. This was mainly initiated by a prime correspondence going back to Sturmfels \cite{Sturmfels1996}. It is worth mentioning that this interpretation only works for triangulations that are regular (see \Cref{sect:EhrhartComAlg}). Our first contribution is threefold: Starting off, we prove a special case of Betke-McMullen's result through the aforementioned lens of commutative algebra. Though it might be well-known to experts, we have not found such a proof in the literature. We  are then able to prove an analogue of Sturmfels' correspondence between regular (unimodular) triangulations of boundary complexes and (square-free) initial ideals of the boundary version of usual toric ideals. While this is the main theorem of the first part of our work, it immediately implies Stapledon's theorem for regular unimodular triangulations.

In the second part of this article we aim at contributions to another unifying objective in Ehrhart theory~--~characterizing the set of vectors in $\RR^{d+1}$ that appear as coefficient vectors of $h^\ast_P(z)$ for lattice $d$-polytopes. This is a very difficult problem not yet fully understood, even for low dimensions (see \cite[Problem 10.21]{Beck2010} for $d=3$). Instead of considering the entire set of polytopes, a common approach is to study smaller classes that share certain properties. We will restrict ourselves (with \Cref{th:ds-lattice-polytopes} being the only exception) to lattice polytopes (or their boundary complexes) that have a regular unimodular triangulation. Then, one can investigate whether the $h^\ast$-vectors of these polytopes meet certain properties like (shifted) symmetry, unimodality, log-concavity or real-rootedness, among others. For further information on examples and counterexamples regarding these properties we refer to the latest work of Ferroni and Higashitani \cite{FerroniHigashitani2024}.

One prominent example that serves as a motivation for our work are the Dehn-Sommerville relations. Formulated by Klee \cite{Klee1964} for homology manifolds without boundary and later extended to homology manifolds with boundary by Gr\"abe \cite{Graebe1987}, they govern a deep structural layer in the theory of $h$-vectors of simplicial polytopes.% boundary complexes of lattice polytopes. 
%Following a proof in \cite{NovikSwartz09}, 
We are able to derive general Dehn-Sommerville relations of differences between coefficients of $h^\ast_P(z)$ and $h^\ast_{\partial P}(z)$. It should not come as a total surprise that these relations hold for any lattice $d$-polytope. However, the existence of a regular (unimodular) triangulation $\mathcal{T}$ of $\partial P$ guarantees that $\mathcal{T}$ is polytopal, i.e.\ $\partial \mathcal{T} \cong \partial Q$ (as simplicial complexes) for a simplicial polytope $Q$ (see \cite{DeLoeraRambauSantos2010}). This bridge allows us to translate results for $h$-vectors of simplicial polytopes to regular boundary triangulations of lattice polytopes. The most prominent examples are the  $g$-theorem (see \cite{BilleraLee1981,Stanley1980}) and the Generalized Lower Bound Theorem (see \cite{McMullenWalkup1971,MuraiNevo2013,Stanley1980}) for simplicial polytopes. The former imposes immediate symmetry and unimodality conditions as well as $M$-sequence properties on $h^\ast(\partial P)$, recovering some results by Stapledon \cite[Theorem\ 2.20]{Stapledon2009}. The latter allows us to characterize equality of consecutive coefficients of $h^\ast(\partial P)$ via stackedness of a regular unimodular triangulation $\mathcal{T}$ of $\partial P$, leading to symmetry of $h^\ast(P)$ between $h_r^\ast(P)$ and $h^\ast_{d-r+1}(P)$ for some $r \leq \lfloor d/2 \rfloor$. Our last contribution deals with upper and lower bounds on the differences $h^\ast_j(P) - h^\ast_{d-j+1}(P)$ by moving to the boundary of $P$. 

The structure of the paper is as follows: \Cref{sect:Prelim} contains all necessary notation and concepts from Ehrhart theory, commutative algebra and homology theory that are used throughout. \Cref{sect:EhrhartComAlg} provides the proofs of (the special cases of) Betke-McMullen's theorem as well as the boundary case going back to Stapledon. In \Cref{sect:Applic}, we discuss implications of these results for the $h^\ast$-vector of $P$. While most results require $\partial P$ or $P$ to admit a regular unimodular triangulation, some hold in broader generality for all lattice polytopes.

%%%--------------------------------------------------------------------------------%%%
%%%                           S E C T I O N     2                                  %%%
%%%--------------------------------------------------------------------------------%%%
\section{Preliminaries}\label{sect:Prelim}

\textit{Simplicial complexes and Stanley-Reisner ideals.} An (abstract) \emph{simplicial complex} $\mc{K}$ on a (finite)  set $V$ is a collection of subsets (also called \emph{faces}) of $V$ that is closed under inclusion. The \emph{dimension} of  a face $F$ is defined as $\dim F\coloneqq \#F-1$, and $\dim \mc{K}\coloneqq \max(\dim F~:~F\in \mc{K})$ is called the \emph{dimension} of $\mathcal{K}$. The \emph{link} of a face $F \in \mc{K}$ is defined as $\link_\mc{K}(F) \coloneqq \{G \in \mc{K}~:~ F \cup G = \mc{K} \text{ and } F \cap G = \emptyset\}$. 
A natural invariant of a $d$-dimensional simplicial complex $\mc{K}$ is its so-called \emph{$h$-polynomial} $h_{\mathcal{K}}(z)$ defined as
\begin{align}\label{eq:relation-h-f}
    h_\mc{K}(z) \coloneqq\sum_{k=0}^{d+1}h_k(\mc{K})z^k\coloneqq  \sum_{k=-1}^d f_k(\mc{K})z^{k+1}(1-z)^{d-k},
\end{align}
where $f_{-1}(\mc{K}) = 1$ and $f_k(\mc{K})$ is the number of $k$-dimensional faces of $\mathcal{K}$. The vector $h(\mc{K})=(h_0(\mc{K}),h_1(\mc{K}),\ldots,h_d({\mc{K}}))$ is called the \emph{$h$-vector} of $\mc{K}$.

%As already pointed out, a triangulation of a lattice polytope  $P\subseteq \mathbb{Z}^d$ or its boundary can be seen as the geometric realization of a simplicial complex on the set $P\cap \mathbb{Z}^d$. 
Let $\KK$ be a field. Given a simplicial complex  $\mc{K}$ on $V$, it is natural to consider its so-called \emph{Stanley-Reisner ideal} $I_{\mc{K}}$, which is the squarefree monomial ideal in $\KK[x_v~:~v\in V]$ whose generators correspond to non-faces of $\mc{K}$:
$$I_\mathcal{K} = \langle \bs{x}^\sigma=\prod_{v\in \sigma}x_v \;:\;\sigma \notin \mc{K}\rangle\subseteq \KK[x_v~:~v\in V].$$
%Each subset $\sigma \subseteq [n]$ is identified with its squarefree vector in $\{0,1\}^n$, which has entry $1$ in the $i$-th entry if $i \in \sigma$ and $0$ otherwise. This convention allows us to write $\bs{x}^\sigma = \prod_{i=1}^n x_i$ for a monomial in the polynomial ring $R$. We define the Stanley-Reisner ideal of the simplicial complex $\mathcal{T}$ as the squarefree monomial ideal
%$$I_\mathcal{T} = \langle \bs{x}^\sigma \;|\;\sigma \notin \mathcal{T}\rangle$$
%generated by monomials corresponding to non-faces of $\mathcal{T}$. 
The \emph{Stanley-Reisner ring} of a $d$-dimensional simplicial complex  $\mc{K}$ is the quotient ring $\KK[\mc{K}]\coloneqq \KK[x_v~:~v\in V]/I_{\mc{K}}$. Its Hilbert series is given by
\begin{align}\label{eq:hilb-series-sr}
    \Hilb_{\KK[\mc{K}]}(z) = \frac{h_{\mc{K}}(z)}{(1-z)^{d+1}} = \frac{\sum_{i=0}^d h_i(\mc{K})z^i}{(1-z)^{d+1}}
\end{align}
(see e.g., \cite[Theorem 1.13, Corollary 1.15]{MillerSturmfels2005}).

\begin{example}
    Consider the simplicial complex $\mc{K}$ on $\{1,\ldots,6\}$ consisting of all subsets of the sets $\{1,2,3\},\{2,4\},\{3,4\},\{5,6\}$. A geometric realization is
    \begin{center}
        \begin{tikzpicture}[scale=0.9, vertex/.style={circle,fill=black,inner sep=1.8pt},
                            edge/.style={line width=1pt},face/.style={fill=gray!50}]
        % Left component vertices
        \coordinate (v1) at (0,0);
        \coordinate (v2) at (2,0);
        \coordinate (v3) at (1,1.8);
        \coordinate (v4) at (3.2,1.8); 
        % Right component vertices
        \coordinate (v5) at (5.8,0);
        \coordinate (v6) at (5.8,1.8);
        % Filled 2-simplex {1,2,3}
        \fill[face] (v1) -- (v2) -- (v3) -- cycle;
        % Left component edges
        \draw[edge] (v1) -- (v2);
        \draw[edge] (v1) -- (v3);
        \draw[edge] (v2) -- (v3);
        \draw[edge] (v3) -- (v4);
        \draw[edge] (v2) -- (v4);
        % Right component edge
        \draw[edge] (v5) -- (v6);
        % Vertices
        \node[vertex,label=below left:$1$] at (v1) {};
        \node[vertex,label=below:$2$] at (v2) {};
        \node[vertex,label=above:$3$] at (v3) {};
        \node[vertex,label=above:$4$] at (v4) {};
        \node[vertex,label=below:$5$] at (v5) {};
        \node[vertex,label=above:$6$] at (v6) {};
        \end{tikzpicture}
    \end{center}
  and its Stanley-Reisner ideal is equal to 
  \[
    I_\mc{K} = \langle x_1 x_4, x_1 x_5, x_1 x_6, x_2 x_5, x_2 x_6, x_3 x_5, x_3 x_6, x_4 x_5, x_4 x_6, x_2 x_3 x_4 \rangle.
  \]
\end{example}

\textit{Homology and simplicial manifolds.} A closed manifold $M$ is called \emph{triangulable} if there exists a simplicial complex $\mc{K}$, whose geometric realization is homeomorphic to $M$ (see Figure \ref{fig:Triangulable-Manifold}). We call such $\mc{K}$ a \emph{simplicial manifold}. Readers should note that in \Cref{subsect:differences}, \Cref{claim:santos} will  allow us to focus on a specific class of simplicial manifolds: simplicial polytopes. Their boundary complexes naturally give rise to a simplicial complex and, abusing notation, we will refer to their $h$-vector as the $h$-vector of the polytope. Luckily, the next two important results, which will be central in \Cref{sect:Applic}, hold for this class of polytopes. For a broader treatment of face enumeration of simplicial manifolds we refer the interested reader to the insightful survey by Klee and Novik \cite{KleeNovik16}.

The celebrated $g$-theorem of Stanley \cite{Stanley1980} (necessity) and Billera-Lee \cite{BilleraLee1981} (sufficiency) gives a complete characterization of $h$-vectors of simplicial polytopes. 

\vspace{1em}
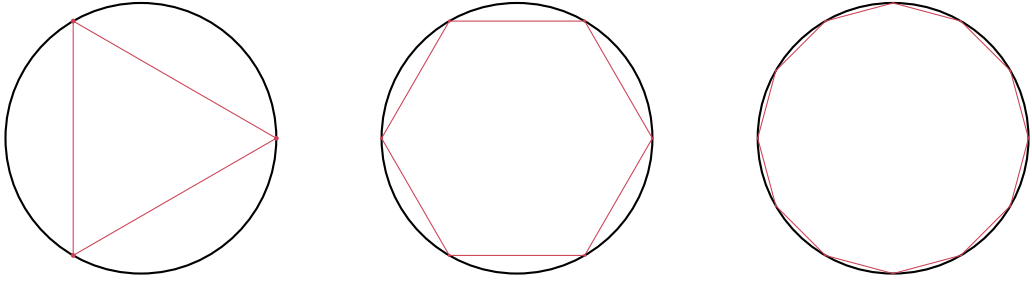
\begin{figure}[h]
    \centering
    \begin{tikzpicture}[scale=1]
        \def\r{1.8}
        % ---------------- Left: 3 points ----------------
        \begin{scope}[xshift=-5cm]
            \draw[thick] (0,0) circle (\r);
            \foreach \i in {0,...,2} {
                \coordinate (A\i) at ({\r*cos(120*\i)},{\r*sin(120*\i)});
            }
            \draw[brickred, thin] (A0)
                \foreach \i in {1,...,2} { -- (A\i) } -- cycle;
            \foreach \i in {0,...,2} {
                \fill[brickred] (A\i) circle (0.9pt);
            }
        \end{scope}
        % ---------------- Middle: 6 points ----------------
        \begin{scope}
            \draw[thick] (0,0) circle (\r);
            \foreach \i in {0,...,5} {
                \coordinate (B\i) at ({\r*cos(60*\i)},{\r*sin(60*\i)});
            }
            \draw[brickred, thin] (B0)
                \foreach \i in {1,...,5} { -- (B\i) } -- cycle;
            \foreach \i in {0,...,5} {
                \fill[brickred] (B\i) circle (0.6pt);
            }
        \end{scope}
        % ---------------- Right: 12 points ----------------
        \begin{scope}[xshift=5cm]
            \draw[thick] (0,0) circle (\r);
            \foreach \i in {0,...,11} {
                \coordinate (C\i) at ({\r*cos(30*\i)},{\r*sin(30*\i)});
            }
            \draw[brickred, thin] (C0)
                \foreach \i in {1,...,11} { -- (C\i) } -- cycle;
            \foreach \i in {0,...,11} {
                \fill[brickred] (C\i) circle (0.4pt);
            }
        \end{scope}
    \end{tikzpicture}
    \caption{Triangulations of the closed manifold $\sphere^1$ through polygons with $n=3,6 \text{ and } 12$ vertices.}
    \label{fig:Triangulable-Manifold}
\end{figure}

\begin{theorem}[The $g$-theorem \cite{BilleraLee1981,Stanley1980}]\label{th:g-theorem}
    A vector $\bs{h} = (h_0,h_1,\dots,h_d) \in \ZZ^{d+1}$ is the $h$-vector of a simplicial $d$-polytope $\mc{K}$ if and only if
    \begin{enumerate}
        \item[(i)] $h_j = h_{d-j}$ for all $j$,
        \item[(ii)] $1 = h_0 \leq h_1 \leq \cdots \leq h_{\lfloor d/2 \rfloor}$, and
        \item [(iii)] the numbers $\bs{g} \coloneqq (g_0,g_1,\dots,g_{\lfloor d/2\rfloor})$, where $g_j \coloneqq h_j - h_{j-1}$, form an $M$-sequence.
    \end{enumerate}
\end{theorem}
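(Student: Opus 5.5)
This is the classical $g$-theorem, which the paper cites rather than reproves. I would follow the standard two-part strategy: necessity via Stanley's Hard Lefschetz argument, sufficiency via the Billera--Lee construction.

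\emph{Necessity.} Let $Q$ be a simplicial $d$-polytope with boundary complex $\mc{K}$.

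Condition (i) is the Dehn--Sommerville relations. I would derive them from the Euler relation applied to links: every link of a face in $\partial Q$ is a sphere. Alternatively, they follow from the palindromicity of the $h$-polynomial of a shelling, where reversing a line shelling swaps restriction sizes $j \leftrightarrow d-j$.

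For (ii) and (iii) I would first perturb $Q$ so that its vertices are rational. This does not change the face lattice. I then pass to the projective toric variety $X_Q$ associated with the normal fan of $Q$. It is an orbifold, so its rational cohomology satisfies Poincar\'e duality and the Hard Lefschetz theorem. Moreover,
\[
H^\ast(X_Q;\mathbb{Q}) \cong \mathbb{Q}[\mc{K}]/(\theta_1,\dots,\theta_d),
\]
where $\theta_1,\dots,\theta_d$ is a linear system of parameters given by the vertex coordinates, and $\dim H^{2j} = h_j$. Let $\omega$ be an ample class of degree one. Hard Lefschetz gives that multiplication by $\omega$ is injective from degree $j-1$ to degree $j$ for $j \le \lfloor d/2 \rfloor$. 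This yields (ii). It also shows that the quotient by $\omega$ is a standard graded algebra, generated in degree one, with Hilbert function $g_j$ in degrees $j\le \lfloor d/2\rfloor$. Macaulay's theorem then gives (iii).

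\emph{Sufficiency.} Given $\bs{h}$ satisfying (i)--(iii), I would use Macaulay's theorem to realize $\bs g$ as the $f$-vector data of a multicomplex, or equivalently of an order ideal of monomials. From it I would build, following Billera--Lee, a shellable ball inside the boundary of a cyclic polytope $C(n,d+1)$, with $n = h_1+d+1$. The ball's facets are chosen via Gale's evenness condition so that its shelling restriction numbers are exactly $g_0,\dots,g_{\lfloor d/2\rfloor}$. Its boundary is then a $(d-1)$-sphere whose $h$-vector is $\bs h$. Finally, I would show this sphere is polytopal. The argument places a point just beyond exactly the facets of the ball in $C(n,d+1)$, and projects the resulting polytope from that point to obtain a $d$-polytope with the correct boundary.

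\emph{Main obstacle.} The hardest step is Hard Lefschetz for the toric variety, or a combinatorial substitute such as McMullen's polytope algebra or Lefschetz properties of Stanley--Reisner rings. In the context of this paper, I would simply invoke \cite{Stanley1980,BilleraLee1981}, since the statement is used only as a black box in \Cref{sect:Applic}.
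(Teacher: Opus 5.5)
Your outline is the classical proof that the paper uses only as a black box. The paper gives no argument of its own; it cites Stanley for necessity (Hard Lefschetz for the associated projective toric orbifold, then Macaulay's theorem) and Billera--Lee for sufficiency (a shellable ball in $\partial C(n,d+1)$ with $n=h_1+d+1$, and a point beyond exactly its facets), which is the route you sketch, so the approach agrees.

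One slip needs correcting. The toric variety must be built from the fan of cones over the faces of $Q$, with the origin moved into the interior of $Q$; this is the normal fan of the simple polar polytope of $Q$, not the normal fan of $Q$ itself. For a simplicial $Q$ the normal fan is generally not simplicial, so neither the orbifold Hard Lefschetz theorem nor the identification $H^\ast(X;\mathbb{Q})\cong\mathbb{Q}[\mathcal{K}]/(\theta_1,\dots,\theta_d)$, with the $\theta_i$ taken from vertex coordinates, would apply to it.
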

We will provide the definition of an $M$-sequence in \Cref{subsect:differences}. 
\noindent The Generalized Lower Bound Theorem characterizes the equality cases  in (ii) of \Cref{th:g-theorem} as follows:
\begin{theorem}[GLBT \cite{McMullenWalkup1971,MuraiNevo2013,Stanley1980}]\label{th:glbt}
    Let $\mc{K}$ be a simplicial $d$-polytope. Then
    \begin{enumerate}
        \item[(i)] $h_0(\partial\mc{K}) \leq h_1(\partial\mc{K}) \leq \cdots \leq h_{\lfloor d/2 \rfloor}(\partial\mc{K})$, and 
        \item[(ii)] $h_r(\partial\mc{K}) = h_{r-1}(\partial\mc{K})$ for some $r \leq \lfloor d/2 \rfloor$ if and only if $\mc{K}$ is $(r-1)$-stacked. 
    \end{enumerate}
\end{theorem}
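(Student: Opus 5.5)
The Generalized Lower Bound Theorem as stated is a cited classical result, so the plan is to assemble it from the standard ingredients rather than prove it from scratch. Part (i) follows from the $g$-theorem (\Cref{th:g-theorem}). Part (ii) splits into the McMullen--Walkup direction and the Murai--Nevo direction.

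\emph{Step 1: part (i).} Since $\mc{K}$ is a simplicial $d$-polytope, its boundary complex $\partial\mc{K}$ has an $h$-vector satisfying condition (ii) of \Cref{th:g-theorem}. Following Stanley, this comes from the hard Lefschetz theorem for the cohomology ring of the projective toric variety of a rational realization of the polar polytope. Concretely, after perturbing the vertices to rational position, which preserves the combinatorics, $\KK[\partial\mc{K}]/(\theta_1,\dots,\theta_d)$ admits a linear form $\omega$ such that multiplication $\omega\colon A_{j-1}\to A_j$ is injective for $j\le\lfloor d/2\rfloor$. Comparing dimensions gives $h_{j-1}\le h_j$. This step needs nothing beyond the cited theorem.

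\emph{Step 2: ``if'' in (ii).} Assume $\mc{K}$ is $(r-1)$-stacked. Then it admits a triangulation $\Delta$ with no interior faces of dimension $\le d-r$. I would apply the McMullen--Walkup argument: express $h(\partial\mc{K})$ through the $h$-vector of the homology ball $\Delta$ and its interior faces via the Dehn--Sommerville relations for manifolds with boundary (Gräbe/Klee). The relation $h_j(\Delta)-h_{d-j+1}(\Delta)=h_j(\partial\Delta)-h_{j-1}(\partial\Delta)$ reduces the claim to showing $h_{d-r+1}(\Delta)=0$. That vanishing is read off from the counts of interior faces, since none exist in low dimension, using the $h$-to-$f$ conversion \eqref{eq:relation-h-f} restricted to interior faces.

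\emph{Step 3: ``only if'' in (ii), the main obstacle.} This is the Murai--Nevo theorem. Given $h_r=h_{r-1}$, I would take $\Delta$ to be the complex of all faces whose $(\le r)$-skeleton lies in $\partial\mc{K}$, and show that $\Delta$ is a triangulated ball with boundary $\partial\mc{K}$. The tool is again the Lefschetz element from Step 1. Equality $h_{r-1}=h_r$ makes $\omega\colon A_{r-1}\to A_r$ bijective, and Murai--Nevo use this to deduce that the Stanley--Reisner ideal of $\partial\mc{K}$ is generated in degrees $\le r$ modulo linear forms. They then show, via generic initial ideals, that $\Delta$ is Cohen--Macaulay with the required boundary. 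Geometric realizability as a triangulation of $\mc{K}$ itself requires an additional convexity argument, and this step is where I expect the real difficulty. Since the result is classical, I would cite \cite{MuraiNevo2013} here rather than redo it.
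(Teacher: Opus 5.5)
The paper gives no proof of this theorem. It is quoted as a black box from Stanley for (i) and from McMullen--Walkup and Murai--Nevo for (ii). Your plan of assembling it from exactly these sources therefore matches the paper's treatment. Steps~1 and~3 are fair summaries, and you are right to defer Step~3 to \cite{MuraiNevo2013}. Note that Step~1 is Stanley's characteristic-zero argument, so take $\KK=\RR$ there. The one place where your sketch is wrong as written is the bookkeeping in Step~2.

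\emph{Step~2, first problem.} At $j=r$, the ball relation $h_j(\Delta)-h_{d-j+1}(\Delta)=h_j(\partial\Delta)-h_{j-1}(\partial\Delta)$ reduces $h_r(\partial\Delta)=h_{r-1}(\partial\Delta)$ to $h_r(\Delta)=h_{d-r+1}(\Delta)$. It does not reduce it to $h_{d-r+1}(\Delta)=0$ alone.

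\emph{Step~2, second problem.} Applying the $h$-to-$f$ conversion to the interior faces computes the \emph{relative} $h$-vector $h(\Delta,\partial\Delta)$, not $h(\Delta)$. The missing link is the reciprocity $h_j(\Delta,\partial\Delta)=h_{d+1-j}(\Delta)$ for a homology $d$-ball.

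\emph{The repair.} With the reciprocity the argument closes.
\begin{enumerate}
\item Every interior face of $\Delta$ has at least $d-r+2$ vertices, so $\sum_k f_k^\circ z^{k+1}(1-z)^{d-k}$ is divisible by $z^{d-r+2}$.
\item Hence $h_j(\Delta,\partial\Delta)=0$ for $j\le d-r+1$, and by reciprocity $h_k(\Delta)=0$ for all $k\ge r$.
\item Since $r\le d-r+1$, both $h_r(\Delta)$ and $h_{d-r+1}(\Delta)$ vanish, and the ball relation gives $h_r(\partial\Delta)=h_{r-1}(\partial\Delta)$.
\end{enumerate}

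\emph{An alternative repair.} Apply the $h$-to-$f$ conversion to all of $\Delta$. This gives $h_j(\Delta)=h_j(\partial\Delta)-h_{j-1}(\partial\Delta)$ for $j\le d-r+1$. Your reduction to $h_{d-r+1}(\Delta)=0$ is then valid, but it comes from this count at $j=d-r+1$ together with the symmetry $h_i(\partial\Delta)=h_{d-i}(\partial\Delta)$. The vanishing is then supplied by the ball relation at $j=r$, not by the interior-face count.

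Either repair gives the standard McMullen--Walkup direction.
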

We will provide the definition of \emph{stackedness} in \Cref{subsect:differences} as well. \\

\textit{Lattice polytopes, triangulations and Ehrhart theory.} Let $P \subset \RR^d$ be a lattice polytope, i.e., the convex hull of finitely many points in $\ZZ^d$. The \emph{dimension} of $P$ is defined as the dimension of its affine hull and we assume that $P$ is full-dimensional. A $d$-dimensional lattice polytope will be referred to as a \emph{$d$-polytope}. The union of all facets of $P$ forms the \emph{boundary} $\partial P$ and $P^\circ \coloneqq P\setminus \partial P$ is the \emph{interior} of $P$. We write 
$$\mathcal{A} = \{\bs{\alpha}_1,\dots,\bs{\alpha}_n\} = P \cap \ZZ^d$$ 
for the set of lattice points contained in $P$ and $V \subseteq \mathcal{A}$ for the subset of vertices.
Moreover, we denote by $\cone(P)$ the  \emph{homogenization} of $P$ which is obtained by embedding $P$ in $\RR^{d+1}$ at height $x_{d+1}=1$ and taking the cone over $P\times \{1\}$ with apex the origin, i.e.,
\[
\cone(P)\coloneqq \left\{\sum_{\bs{v}\in V}\lambda_{\bs{v}} (\bs{v},1)~:~\lambda_{\bs{v}}\geq 0 \text{ for }\bs{v}\in V \right\}.
\]
Frequently, we will also refer to $\cone(P)$ simply as the cone over $P$.
A \emph{subdivision} of $P$ is a finite collection $\mathcal{S}$ of polytopes $P_1,\dots,P_m$ such that vertices of each $P_i$ are drawn from $V$, the union of these polytopes is equal to $P$ and $P_i \cap P_j$ is a common face of both $P_i$ and $P_j$ for $i\neq j$. If each $P_i$ is a simplex, i.e., its vertices are affinely independent and $\mathcal{S}$ is closed under taking faces, then $\mathcal{S}$ is called a \emph{triangulation}. We usually use  $\mathcal{T}$ to denote a triangulation. Note that the second condition in the definition of a triangulation $\mathcal{T}$ ensures that $\mathcal{T}$ can be interpreted as the geometric realization of the simplicial complex whose faces are the vertex sets of the simplices in $\mathcal{T}$. In order to emphasize which polytope or face $\mc{S}$ subdivides, we occasionally write $\mc{S}(P)$ and $\mc{S}(F)$, respectively. Subdivisions that can be obtained by lifting the vertices of $P$ to $\RR^{d+1}$ are called \emph{regular}. More precisely, the vertices $\bs{v}_1,\dots,\bs{v}_n \in V$ are lifted to $(\bs{v}_1,\omega(\bs{v}_1)),\dots,(\bs{v}_n,\omega(\bs{v}_n)) \in \RR^{d+1}$ by a weight function $\omega:V \to \RR$. The subdivision $\mathcal{S}$ is then induced by projecting the lower facets (i.e., the facets whose outer normal vector has a negative last coordinate) of $\text{conv}((\bs{v}_1,\omega(\bs{v}_1)),\dots,(\bs{v}_n,\omega(\bs{v}_n)))$ to $\RR^d \times \{0\}$. A weight function $\omega$ is called \emph{generic} if the induced regular subdivision is a triangulation. Note that different weight functions may induce the same subdivision, in fact, regular subdivisions are naturally parameterized by equivalence classes of weight functions. Since $\omega$ can be also viewed as a vector, we sometimes write $\omega \in \RR^{|V|}$ in short. Given a subdivision $\mathcal{S}$, we call a polytope $F\in \mathcal{S}$ a \emph{boundary face} if $F\subseteq G$ for a $(d-1)$-polytope $G\in \mathcal{S}$  that is contained in a unique $d$-polytope $H\in \mathcal{S}$. The restriction of $\mathcal{S}$ to $\partial P$ gives the \emph{boundary complex}
$$\partial \mathcal{S} = \{F\in \mathcal{S} ~ : ~F \text{ is a boundary face of } \mathcal{S}\}.$$
As such, it is natural to consider its so-called \emph{$h$-polynomial} $h_{\mathcal{T}}(z)$ defined as 
\begin{align}\label{eq:relation-h-f}
    h_\mathcal{T}(z) \coloneqq\sum_{k=0}^{d+1}h_k(\mathcal{T})z^k\coloneqq  \sum_{k=-1}^d f_k(\mathcal{T})z^{k+1}(1-z)^{d-k},
\end{align}
where $f_{-1}(\mathcal{T}) = 1$ and $f_k(\mathcal{T})$ is the number of $k$-dimensional faces of $\mathcal{T}$. A triangulation $\mathcal{T}$ is \emph{unimodular} if each $d$-simplex has normalized volume $1$. In this case, $P$ is known to have the \emph{integer decomposition property} (IDP, for short) which means that  for every $\bs{z} \in mP \cap \ZZ^d$ there exist  $\bs{y}_1,\dots,\bs{y}_m \in P \cap \ZZ^d$ such that $\bs{z} = \bs{y}_1+\cdots+\bs{y}_m$.

For a positive integer $m$, let $L_P(m)$ denote the number of integer points in $mP$. 
Ehrhart's theorem \cite{Ehrhart1962} says that the generating series of $L_P(m)$, the so-called \emph{Ehrhart series}, is given by the following rational function:
$$\Ehr_P(z) \coloneqq 1+ \sum_{m \geq 1} L_P(m)z^m = \frac{h^\ast_P(z)}{(1-z)^{d+1}}.$$
The numerator polynomial, which is the so-called \emph{$h^\ast$-polynomial} of $P$, has degree at most $d$ and nonnegative coefficients \cite{Stanley1980}. Similarly, we define the $h^\ast$-polynomials of $\partial P$ and $P^\circ$ via 
\begin{align*}
    \Ehr_{\partial P}(z) \coloneqq 1+ \sum_{m \geq 1} L_{\partial P}(m)z^m = \frac{h^\ast_{\partial P}(z)}{(1-z)^d}
\end{align*} 
and
\begin{align*}
    \Ehr_{P^\circ}(z) \coloneqq \sum_{m \geq 1} L_{P^\circ}(m)z^m = \frac{h^\ast_{P^\circ}(z)}{(1-z)^{d+1}},
\end{align*}
where $L_{\partial P}(m)=|\partial (m P)\cap \mathbb{Z}^d|$ and $L_{P^\circ}(m)=|(mP)^\circ\cap \mathbb{Z}^d|$. 
Note that $h^\ast_{\partial P}(z)$ is a polynomial of degree less than or equal to $d-1$. For more information on decompositions of $h^\ast_P(z)$ into palindromic polynomials with nonnegative coefficients and the interplay between $h^\ast_P(z)$, $h^\ast_{\partial P}(z)$ and $h^\ast_{P^\circ}(z)$ we refer to the works of Bajo-Beck \cite{BajoBeck2023} and Stapledon \cite{Stapledon2009}.

Because we will refer to the following result several times, we state it here.
\begin{theorem}[Betke-McMullen \cite{BetkeMcMullen1985}]\label{th:betke-mcmullen}
    Let $P$ be a lattice $d$-polytope that admits a unimodular triangulation $\mc{T}$. Then $h^\ast_P(z) = h_\mc{T}(z)$. 
\end{theorem}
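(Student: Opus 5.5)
The most direct route is to count lattice points via the half-open decomposition of the dilates induced by $\mc{T}$, and then compare generating functions with the expansion \eqref{eq:relation-h-f} of $h_\mc{T}(z)$ in terms of the $f$-vector.

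\textbf{Step 1: decompose the dilates.} Every point $\bs{x}\in mP$ lies in the relative interior of a unique face $\sigma$ of $\mc{T}$, namely the smallest simplex containing it. Since $\mc{T}$ triangulates $P$, the relative interiors of its faces partition $P$, so
\[
L_P(m)=\sum_{\sigma\in\mc{T}} \#\bigl((m\sigma)^\circ\cap\ZZ^d\bigr),
\]
where $(\cdot)^\circ$ denotes the relative interior. The term $m=0$ is accounted for by the empty face, which contributes the constant $1$ in $\Ehr_P(z)$.

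\textbf{Step 2: count in a unimodular simplex.} Let $\sigma$ be a $k$-simplex of $\mc{T}$. Unimodularity of the full-dimensional simplices implies that every face is unimodular within its own affine lattice. This is the one point needing care: the vertices $\bs{v}_0,\dots,\bs{v}_d$ of a full-dimensional simplex $\Delta$ containing $\sigma$ satisfy the condition that $\bs{v}_1-\bs{v}_0,\dots,\bs{v}_d-\bs{v}_0$ form a lattice basis of $\ZZ^d$. Hence any subset of these differences is part of a basis, and $\operatorname{aff}(\sigma)\cap\ZZ^d$ is generated by the corresponding differences.

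After a lattice isomorphism, $\sigma$ therefore becomes the standard simplex $\{\bs{y}\in\RR_{\ge0}^{k+1}:\sum y_i=1\}$. The lattice points of $(m\sigma)^\circ$ then correspond to compositions of $m$ into $k+1$ positive parts, giving $\binom{m-1}{k}$ points. Summing over $m$,
\[
\sum_{m\ge1}\binom{m-1}{k}z^m=\frac{z^{k+1}}{(1-z)^{k+1}}.
\]

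\textbf{Step 3: assemble.} Combining Steps 1 and 2,
\[
\Ehr_P(z)=\sum_{k=-1}^{d} f_k(\mc{T})\frac{z^{k+1}}{(1-z)^{k+1}}=\frac{\sum_{k=-1}^d f_k(\mc{T})z^{k+1}(1-z)^{d-k}}{(1-z)^{d+1}}.
\]
By \eqref{eq:relation-h-f}, the numerator is exactly $h_\mc{T}(z)$. Comparing with the definition of $h^\ast_P(z)$ yields $h^\ast_P(z)=h_\mc{T}(z)$.

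\textbf{Main obstacle.} I expect the only step requiring real care to be the claim in Step 2 that faces of unimodular simplices are unimodular relative to their affine lattices. Everything else is bookkeeping.

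\textbf{Alternative for regular triangulations.} If $\mc{T}$ is additionally regular, one could instead use the Gr\"obner picture. Unimodularity makes the toric ideal $I_\mathcal{A}$ have squarefree initial ideal $I_\mc{T}$ (Sturmfels). IDP identifies $\KK[x]/I_\mathcal{A}$ with the Ehrhart ring, so its Hilbert series is $\Ehr_P(z)$. Since passing to an initial ideal preserves Hilbert series, \eqref{eq:hilb-series-sr} then gives the result. However, the combinatorial proof above covers all unimodular triangulations, so I would present that one.
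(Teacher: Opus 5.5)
Your proof is correct, but it takes a different route from the paper.

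The paper never proves this theorem in the stated generality. It cites Betke--McMullen and only reproves the case of a \emph{regular} unimodular triangulation (Corollary~\ref{cor:betke-mcmullen-spec}), using exactly the argument you sketch as your ``alternative'':
\begin{itemize}
\item IDP makes $\varphi_P$ surjective, so $R_P/I_P\cong\KK[P]$ has Hilbert series $\Ehr_P(z)$;
\item passing to $\initial_{\prec_\omega}(I_P)$ preserves the Hilbert series;
\item Theorem~\ref{th:sturmfels-corr} and Corollary~\ref{cor:sturmfels-unimod} identify that initial ideal with $I_\mc{T}$.
\end{itemize}

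Your main argument instead partitions each dilate $mP$ into the relative interiors of the dilated faces $m\sigma$. Two small fixes are worth making here. In Step~1 the unique face containing $\bs{x}$ in its relative interior is $m\sigma$, not $\sigma$. In Step~2, an affine lattice isomorphism of $\operatorname{aff}(\sigma)$ does not literally commute with dilation. The clean justification of the count $\binom{m-1}{k}$ is homogenization: unimodularity makes $(\bs{v}_0,1),\dots,(\bs{v}_d,1)$ a basis of $\ZZ^{d+1}$. Hence the lattice points of $m\sigma$ are exactly the points $\sum\mu_i\bs{v}_i$ with $\mu\in\NN^{k+1}$ and $\sum\mu_i=m$.

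What your approach buys is that it is elementary and needs no regularity, so it proves the theorem exactly as stated. The same partition, restricted to the faces of a unimodular triangulation $\Delta$ of $\partial P$, gives
$\Ehr_{\partial P}(z)=\sum_{k=-1}^{d-1}f_k(\Delta)z^{k+1}/(1-z)^{k+1}=h_\Delta(z)/(1-z)^d$.
That is Theorem~\ref{th:stapledon} in full, without the regularity hypothesis of Corollary~\ref{cor:stapledon-spec}.

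What the paper's approach buys is structural rather than numerical. It exhibits $\KK[\mc{T}]$ as a Gr\"obner degeneration of the Ehrhart ring. That is the template Section~3 then extends to the toric boundary ideal $I_{\partial P}$ and its degeneration to $\KK[\Delta]$ (Theorem~\ref{th:sturmfels-bd-corr}). There the $h^\ast$-identities are byproducts, and regularity is the price paid for having a term order.
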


Since the boundary case was implicitly contained but not explicitly stated in Stapledon’s work \cite{Stapledon2009}, we are doing so here for the first time. To distinguish a triangulation of $\partial P$ from a triangulation of $P$, we use $\Delta$ instead of $\mc{T}$.
\begin{theorem}\label{th:stapledon}
    Let $P$ be a lattice $d$-polytope whose boundary $\partial P$ admits a unimodular triangulation $\Delta$. Then $h^\ast_{\partial P}(z) = h_\Delta(z)$. 
\end{theorem}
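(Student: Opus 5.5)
Since $\Delta$ is unimodular, every simplex of $\Delta$ is a unimodular simplex. Each such simplex lies in some facet of $P$, so its dimension is at most $d-1$, and unimodular means that its vertices form an affine lattice basis of the lattice in its affine hull. My plan is to count lattice points of $m\,\partial P$ by decomposing $\partial P$ into relatively open simplices of $\Delta$. Because $\Delta$ is a triangulation of $\partial P$, the relative interiors of its faces partition $\partial P$, and dilating gives a partition of $\partial(mP)=m\,\partial P$ into the sets $m\sigma^\circ$. Hence for $m\geq 1$,
\[
L_{\partial P}(m)=\sum_{\emptyset\neq\sigma\in\Delta} \#\bigl(m\sigma^\circ\cap\ZZ^d\bigr).
\]

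The key local step is to count the lattice points in the relative interior of the $m$-th dilate of a unimodular $k$-simplex $\sigma$. There is an affine lattice isomorphism from $\operatorname{aff}(\sigma)\cap\ZZ^d$ onto $\ZZ^k$ sending $\sigma$ to the standard simplex. For that I must check that $\operatorname{aff}(\sigma)\cap\ZZ^d$ is exactly the affine lattice generated by the vertices, which is what unimodularity in the facet's lattice means. Lattice points of $m\sigma^\circ$ then correspond to tuples $(\lambda_0,\dots,\lambda_k)$ of positive integers summing to $m$. There are $\binom{m-1}{k}$ of these, with generating function
\[
\sum_{m\geq1}\binom{m-1}{k}z^m=\frac{z^{k+1}}{(1-z)^{k+1}}.
\]

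Summing over faces and adding the constant $1$ (which plays the role of $f_{-1}=1$) gives
\[
\Ehr_{\partial P}(z)=\sum_{k=-1}^{d-1} f_k(\Delta)\frac{z^{k+1}}{(1-z)^{k+1}}
=\frac{1}{(1-z)^{d}}\sum_{k=-1}^{d-1} f_k(\Delta)z^{k+1}(1-z)^{d-1-k}.
\]
The numerator is exactly $h_\Delta(z)$ by \eqref{eq:relation-h-f} applied to the $(d-1)$-dimensional complex $\Delta$. Comparing with the definition of $h^\ast_{\partial P}(z)$, whose denominator is $(1-z)^d$, gives $h^\ast_{\partial P}(z)=h_\Delta(z)$.

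The main point needing care is the disjoint decomposition together with the local count. First, one must make sure that lattice points of $m\,\partial P$ are counted exactly once. This follows from $\Delta$ being a polyhedral complex with union $\partial P$, since every point lies in the relative interior of exactly one face. Second, one must make sure that unimodularity of the maximal $(d-1)$-simplices passes to all their faces. This holds because a face of a lattice-basis simplex is again one, relative to the lattice in its own affine hull (a saturated sublattice).
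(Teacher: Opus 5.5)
Your proof is correct, and it takes a genuinely different route from the paper.

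\textbf{Your route.} You compute $\Ehr_{\partial P}(z)$ directly. The relatively open simplices of $\Delta$ partition $\partial P$, each relatively open unimodular $k$-simplex contributes $\bigl(z/(1-z)\bigr)^{k+1}$, and summing gives $h_\Delta(z)/(1-z)^d$. This is the boundary analogue of the classical elementary proof of Betke--McMullen in the unimodular case. It never uses regularity, so it proves \Cref{th:stapledon} exactly as stated.

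\textbf{The paper's route.} The paper never proves the general statement. It attributes it to Stapledon's arguments via local $h$-vectors and proves only the special case of a \emph{regular} unimodular $\Delta$ (\Cref{cor:stapledon-spec}), using commutative algebra:
\begin{enumerate}
\item[(1)] It identifies $\Ehr_{\partial P}(z)$ with the Hilbert series of $\KK[P]/J_{P^\circ}$.
\item[(2)] It uses the integer decomposition property of the facets to write this ring as $R_P/I_{\partial P}$ with $I_{\partial P}=I_P+M_P$.
\item[(3)] It extends $\omega_\partial$ to a compatible weight $\omega$ on all of $P\cap\ZZ^d$.
\item[(4)] It shows that $\sqrt{\initial_{\prec_\omega}(I_{\partial P})}=I_\Delta$, and that this initial ideal is squarefree exactly when $\Delta$ is unimodular.
\item[(5)] Since Hilbert series are invariant under Gr\"obner degeneration, the Hilbert series equals $h_\Delta(z)/(1-z)^d$.
\end{enumerate}

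\textbf{Comparison.} Your approach is shorter, fully elementary, and strictly more general. The paper's approach yields additional structure: a Gr\"obner degeneration of the boundary ring $\KK[P]/J_{P^\circ}$ to the Stanley--Reisner ring $\KK[\Delta]$, and a boundary version of Sturmfels' correspondence, which is the paper's actual objective. The price is the regularity hypothesis, which the paper needs anyway later for polytopality (\Cref{claim:santos}).
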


\textit{Semigroups and semigroup algebras.} Let $\KK$ be a field and let $\mathcal{A}\subseteq \ZZ^d$ be a set of lattice points. We consider a polynomial ring $R\coloneqq \KK[\bs{x}]=\KK[x_{\bs{\alpha}}~:~\bs{\alpha}\in \mathcal{A}]$, where each variable corresponds to a lattice point in $\mathcal{A}$.  The \emph{affine semigroup} associated to $\mathcal{A}$ is the  finitely generated subsemigroup $S = \NN\{\mathcal{A}\} \subset \ZZ^d$ that contains $\mathbf{0}$. % Throughout this paper we will always assume that we have the same number of variables in $R$ as lattice points in $\mathcal{A}$. 
The \emph{semigroup ring} $\KK[S]$ is the $\KK$-algebra with $\KK$-basis $\{\bs{t}^{\bs{a}} = t_1^{a_1}\cdots t_d^{a_d}~:~ \bs{a} \in S\}$ and multiplication defined by $\bs{t}^{\bs{a}} \cdot \bs{t}^{\bs{b}} = \bs{t}^{\bs{a}+\bs{b}}$. It   is a subring of the ring of Laurent polynomials $\KK[\bs{t}^{\pm 1}]=\KK[t_1^{\pm 1},\ldots,t_d^{\pm 1}]$. % and $\KK[S]$ inherits a natural grading from any linear functional that is nonnegative on $S$. 

We can relate $R$ to  $\KK[S]$ via the $\KK$-algebra homomorphism
$$\varphi_{\mathcal{A}} : R \longrightarrow \KK[S], \hspace{1.5em} x_{\bs{\alpha}} \longmapsto \bs{t}^{\bs{{\alpha}}}.$$
We will take a more general look at this setting at the beginning of the next section. \\

\textit{Weight functions, term orders and initial ideals.} Let $P\subseteq \RR^d$ be a lattice polytope with set of lattice points $\mathcal{A}=\{\bs{\alpha}_1,\ldots,\bs{\alpha}_n\}=P\cap \ZZ^d$. 
%Let $\bs{x}^{\bs{b}} \coloneqq x_1^{b_1}\cdots x_n^{b_n} \in R$ be a monomial and associate a lattice point $\bs{a}_i \in P$ with $x_i \in R$.

A \emph{term order} $\prec$ on $R_P\coloneqq\KK[x_{\bs{\alpha}}:\bs{\alpha}\in \mathcal{A}]$ is a total order on the monomials in $R_P$ with (1) $1 \prec \bs{x}^{\bs{b}}$ for all non-unit monomials in $R_P$  and (2)  $\bs{x}^{\bs{b}+\bs{d}} \prec \bs{x}^{\bs{c}+\bs{d}}$ if $\bs{x}^{\bs{b}} \prec \bs{x}^{\bs{c}}$ and $\bs{d}\in \mathbb{N}^n$. 
For a polynomial $f \in R_P$, we define its \emph{initial monomial} $\initial_\prec(f)$ as the $\prec$-maximal monomial occurring in $f$ with non-zero coefficient.  

Let $I \subseteq R_P$ be any ideal. The \emph{initial ideal} with respect to the weighted term order $\prec_\omega$ is the monomial ideal
$$\initial_{\prec_\omega}(I) \coloneqq \langle \initial_{\prec_\omega}(f)~:~f\in I\rangle.$$

It is well-known that for every term order $\prec$ on $R_P$, there exists a weight function $\omega : \mc{A} \to \RR$ such that
\begin{align*}
    \bs{x}^{\bs{b}} \prec_\omega \bs{x}^{\bs{c}} \; \; \text{ if and only if } 
        \omega(\bs{x}^{\bs{b}}) < \omega(\bs{x}^{\bs{c}}), 
\end{align*}
where 
$$\omega(\bs{x}^{\bs{b}}) \coloneqq \sum_{i=1}^n b_i \omega(\bs{\alpha}_i)$$
is the so-called \emph{$\omega$-weight} of a monomial $\bs{x}^{\bs{b}}=\prod_{i=1}^nx_{\bs{\alpha}_i}^{b_i}\in R_P$.
We will write $\prec_{\omega}$ for the term order induced by $\omega$. 

The following well-known result, which is true for any homogeneous ideal in a polynomial ring and can be found, among others, in \cite{CoxLittleOShea2020}, is used in the upcoming sections:

\begin{corollary}\label{cor:hilb-series-hom-ideal}
    Suppose $I \subset R_P$ is a homogeneous ideal and $\prec$ is any term order. Then
    $$\Hilb_{R_P/I}(z) = \Hilb_{R_P/\initial_\prec(I)}(z).$$
\end{corollary}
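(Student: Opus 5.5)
The plan is to follow the standard Gröbner-basis argument, as in Cox--Little--O'Shea. The key input is Macaulay's theorem. If $I\subseteq R_P$ is any ideal and $\prec$ is a term order, then the monomials not lying in $\initial_\prec(I)$ (the standard monomials) project to a $\KK$-basis of $R_P/I$.

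I would first establish this basis statement.
\begin{itemize}
\item \emph{Linear independence.} Suppose a nontrivial $\KK$-combination $f$ of standard monomials lies in $I$. Then $\initial_\prec(f)$ is itself a standard monomial, yet it lies in $\initial_\prec(I)$, a contradiction.
\item \emph{Spanning.} Argue by contradiction. Suppose some polynomial is not congruent modulo $I$ to a combination of standard monomials, and among all such choose $f$ with $\prec$-minimal leading monomial. A term order is a well-ordering, by Dickson's lemma together with condition (1), so this minimum exists.
\item If $\initial_\prec(f)$ is standard, subtract its term. If $\initial_\prec(f)\in\initial_\prec(I)$, then $\initial_\prec(f)=\initial_\prec(g)$ for some $g\in I$. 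This holds because $\initial_\prec(I)$ is spanned by the initial monomials of elements of $I$, and by multiplicativity of $\prec$ the set of such monomials is closed under multiplication by monomials.
\item In either case we subtract a scalar multiple and obtain a polynomial with strictly smaller leading monomial. This contradicts minimality.
\end{itemize}

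Next, use homogeneity. Since $I$ is homogeneous with respect to the standard grading, so is $R_P/I$, and $(R_P/I)_k = (R_P)_k/I_k$. I want the basis statement to hold degree by degree, which requires the reduction above to preserve degree. If $f$ is homogeneous of degree $k$, I may take $g\in I$ homogeneous of degree $k$: replace $g$ by its degree-$k$ component, which lies in $I$ by homogeneity and has the same leading monomial. Each reduction step then stays in degree $k$, and within $(R_P)_k$ only finitely many monomials occur, so the reduction terminates without appealing to well-ordering. Hence the standard monomials of degree $k$ form a $\KK$-basis of $(R_P/I)_k$, and
\[
\dim_\KK (R_P/I)_k = \#\{\text{monomials of degree } k \text{ not in } \initial_\prec(I)\}.
\]

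Finally, $\initial_\prec(I)$ is a monomial ideal, so the monomials outside it of degree $k$ form a $\KK$-basis of $(R_P/\initial_\prec(I))_k$. The two quotients therefore have equal Hilbert functions in every degree, and summing the generating series gives $\Hilb_{R_P/I}(z)=\Hilb_{R_P/\initial_\prec(I)}(z)$.

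The only delicate point is the degree-wise spanning argument, namely ensuring that the reducer $g$ can be chosen homogeneous of the right degree. This is exactly where homogeneity of $I$ enters.
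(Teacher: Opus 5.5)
Your argument is correct. It is the standard Macaulay standard-monomial basis argument, made degree-wise using the homogeneity of $I$, and this is exactly the well-known fact the paper relies on when it cites Cox--Little--O'Shea instead of giving its own proof.
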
 

\vspace{1 em}

%%%--------------------------------------------------------------------------------%%%
%%%                           S E C T I O N     3                                  %%%
%%%--------------------------------------------------------------------------------%%%
\section{Ehrhart theory via commutative algebra}\label{sect:EhrhartComAlg} In this section we present an algebraic approach that relates (boundary) $h^\ast$-polynomials of lattice polytopes to regular (unimodular) triangulations (of the boundary) via the theory of initial ideals and Gr\"obner degenerations. The key ingredient will be Sturmfels' correspondence (see \cite[Theorem 8.3, Corollary 8.9]{Sturmfels1996}), which can be extended to triangulations of the boundary complex of $P$ (see \Cref{th:sturmfels-bd-corr}). Both these correspondences allow us to prove \Cref{th:betke-mcmullen} and \Cref{th:stapledon} in the special case where $P$, and $\partial P$ respectively, admit regular unimodular triangulations.

%%------------------------------ Subsection ---------------------------------------%
\subsection{Gr\"obner degenerations and triangulations} 
To keep this article self-contained, we start by recalling the famous correspondence between (squarefree) Gr\"obner degenerations of toric ideals and (unimodular) regular triangulations (see \cite[Sections 4 and 8]{Sturmfels1996} for more details).
%Given a regular triangulation $\mathcal{T}$ of $P$, recall from \Cref{sect:Prelim} that there exists a generic $\omega$ inducing $\mathcal{T}$ through lifting. 
Let $\mathcal{A} = \{\bs{\alpha}_1,\dots,\bs{\alpha}_n\}$ be a set of lattice points in $\ZZ^d$. Using the notation from \Cref{sect:Prelim}, we let $S=\mathbb{N}\{\mathcal{A}\}$ and $R=\KK[x_{\bs{\alpha}}:\bs{\alpha}\in \mathcal{A}]$. The kernel of the $\mathbb{K}$-algebra homomorphism 
\[
\varphi_{\mathcal{A}}:R \longrightarrow \KK[S], \hspace{1.5 em} x_{\bs{\alpha}} \longmapsto \bs{t}^{\bs{\alpha}}
\]
is called the \emph{toric ideal} of $\mathcal{A}$, denoted by $I_{\mathcal{A}}$. It is spanned as a vector-space by the set of binomials 
\[
\{ \bs{x}^{\bs{u}} - \bs{x}^{\bs{v}} : \bs{u},\bs{v} \in \NN^n,\; u_1 \bs{\alpha}_1 +\cdots+u_n \bs{\alpha}_n = v_1 \bs{\alpha}_1 +\cdots+v_n \bs{\alpha}_n\}.
\]

Let $I \subset R$ be any ideal and let $\prec$ be a term order. The \emph{initial complex} of $I$ is defined as the simplicial complex $\Delta_\prec(I)$ whose Stanley-Reisner ideal is equal to the radical of $\initial_\prec(I)$. This leads to Sturmfels' correspondence (see Theorem 8.3 and Corollary 8.9 in \cite{Sturmfels1996}):

\begin{theorem}\label{th:sturmfels-corr}
    The regular triangulations of $\conv(\mathcal{A})$ are the initial complexes of the toric ideal $I_{\mathcal{A}}$. More precisely, if $\omega \in \RR^{|\mc{A}|}$ represents $\prec$ for $I_{\mathcal{A}}$, then $\Delta_\prec(I_{\mathcal{A}}) = \Delta_{\omega}$.
\end{theorem}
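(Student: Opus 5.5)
The plan is to prove the equality $\Delta_\prec(I_{\mathcal{A}})=\Delta_\omega$ face by face. I will show that a subset $\sigma\subseteq\mathcal{A}$ is a face of $\Delta_\omega$ if and only if no power of $\bs{x}^\sigma$ lies in $\initial_{\prec}(I_{\mathcal{A}})$, i.e.\ $\bs{x}^\sigma\notin\sqrt{\initial_\prec(I_{\mathcal{A}})}$. Throughout, $\mathcal{A}$ is understood homogenized, as the lattice points $(\bs{\alpha},1)$ at height one, so that $I_{\mathcal{A}}$ is homogeneous. Since $\omega$ represents $\prec$ on $I_{\mathcal{A}}$, we have $\initial_\prec(I_{\mathcal{A}})=\initial_\omega(I_{\mathcal{A}})$, and $\omega$ is generic, so $\Delta_\omega$ is a triangulation.

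\textbf{Step 1 (standard monomials).} Because $I_{\mathcal{A}}$ is spanned by the binomials $\bs{x}^{\bs{u}}-\bs{x}^{\bs{v}}$ with $\sum u_i(\bs{\alpha}_i,1)=\sum v_i(\bs{\alpha}_i,1)$, a monomial $\bs{x}^{\bs{u}}$ lies in $\initial_\omega(I_{\mathcal{A}})$ exactly when there is some $\bs{v}\neq\bs{u}$ with the same $\mathcal{A}$-degree and $\omega(\bs{x}^{\bs{v}})<\omega(\bs{x}^{\bs{u}})$. Equivalently, the standard monomials are the $\omega$-minimal representatives of each fibre $\{\bs{v}:\sum v_i(\bs{\alpha}_i,1)=\bs{b}\}$. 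To prove this, I would use that the reduced Gröbner basis of a toric ideal consists of binomials, so reducing modulo it keeps each fibre fixed.

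\textbf{Step 2 (faces give standard monomials).} Let $\sigma\in\Delta_\omega$ and $\operatorname{supp}(\bs{u})\subseteq\sigma$. Since $\sigma$ is a face of the lower hull of the lifted points, there is a linear functional $c$ on $\RR^{d+1}$ with
\[
c\cdot(\bs{\alpha},1)=\omega(\bs{\alpha}) \text{ for } \bs{\alpha}\in\sigma, \qquad c\cdot(\bs{\alpha},1)<\omega(\bs{\alpha}) \text{ for } \bs{\alpha}\notin\sigma .
\]
Take any $\bs{v}$ in the fibre of $\bs{b}=\sum u_i(\bs{\alpha}_i,1)$. Then $\omega(\bs{x}^{\bs{v}})\ge c\cdot\bs{b}=\omega(\bs{x}^{\bs{u}})$, with equality only if $\operatorname{supp}(\bs{v})\subseteq\sigma$. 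In the equality case, affine independence of $\sigma$ forces $\bs{v}=\bs{u}$. Hence every monomial supported on $\sigma$ is standard, in particular no power of $\bs{x}^\sigma$ lies in the initial ideal, and so $\sigma\in\Delta_\prec(I_{\mathcal{A}})$.

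\textbf{Step 3 (non-faces give nilpotent monomials).} This is the step I expect to be the main obstacle. Let $\sigma\notin\Delta_\omega$ and set $\bs{b}=\sum_{\bs{\alpha}\in\sigma}(\bs{\alpha},1)$. The lower-hull height at $\bs{b}$ is $\min\{\sum\lambda_i\omega(\bs{\alpha}_i):\lambda\ge 0,\ \sum\lambda_i(\bs{\alpha}_i,1)=\bs{b}\}$, and it is attained by $\lambda$ supported on the unique cell $\tau$ of $\Delta_\omega$ containing $\bs{b}$ in its relative interior. Since $\sigma\not\subseteq\tau$ (otherwise $\sigma$ would be a face), genericity gives the strict inequality $\sum_\tau\lambda_i\omega(\bs{\alpha}_i)<\sum_{\bs{\alpha}\in\sigma}\omega(\bs{\alpha})$.

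The delicate point is rationality. The coefficients $\lambda$ are the unique solution of a linear system with integer data, namely $\tau$ affinely independent with integer coordinates, so $\lambda\in\mathbb{Q}_{\ge0}^n$. Choose $N$ clearing denominators and put $\bs{v}=N\lambda\in\NN^n$. Then $\bs{v}$ lies in the fibre of $N\bs{b}$ and $\omega(\bs{x}^{\bs{v}})<\omega\bigl((\bs{x}^\sigma)^N\bigr)$. By Step 1, $(\bs{x}^\sigma)^N\in\initial_\omega(I_{\mathcal{A}})$, so $\sigma$ is a non-face of $\Delta_\prec(I_{\mathcal{A}})$.

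Steps 2 and 3 together identify the two complexes, which proves the statement.
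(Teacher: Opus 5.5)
Your argument is correct and is essentially the standard proof of \cite[Theorem 8.3]{Sturmfels1996}, which the paper only cites. In that proof, faces of $\Delta_\omega$ are handled by the witness functional of a lower face, and non-faces by a rational optimum of the height linear program followed by clearing denominators. The one claim you assert rather than prove is that $\omega$ representing $\prec$ forces $\Delta_\omega$ to be a triangulation. This follows from your Step~1: an affine dependency $\sum_i u_i(\bs{\alpha}_i,1)=\sum_i v_i(\bs{\alpha}_i,1)$ with $\bs{u}\neq\bs{v}\in\NN^n$ supported on a lower cell would make $\bs{x}^{\bs{u}}$ and $\bs{x}^{\bs{v}}$ both $\omega$-minimal in their fibre, hence both standard. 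But $\bs{x}^{\bs{u}}-\bs{x}^{\bs{v}}\in I_{\mathcal{A}}$ forces the $\prec$-larger of the two into $\initial_\prec(I_{\mathcal{A}})$, a contradiction.
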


On the level of ideals this means that the Stanley-Reisner ideal of the regular triangulation induced by $\omega\in \RR^{|\mc{A}|}$ equals the radical of the initial ideal of $I_{\mathcal{A}}$ with respect to the term order $\prec$ induced by $\omega \in \RR^{|\mc{A}|}$. Hence, their Hilbert series also coincide.

\begin{corollary}\label{cor:sturmfels-unimod}
    The initial ideal $\initial_\prec(I_{\mathcal{A}})$ is squarefree (i.e., it is equal to its radical ideal) if and only if the corresponding regular triangulation $\Delta_\prec$ of $\conv(\mathcal{A})$ is unimodular.
\end{corollary}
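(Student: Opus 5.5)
The plan is to prove both directions through a single comparison of multiplicities, namely the $\omega$-weighted degree of $\initial_\prec(I_{\mathcal{A}})$ against the normalized volume of $\conv(\mathcal{A})$. Throughout I take $\mathcal{A}$ homogenized, i.e.\ lifted to height $1$, so that $I_{\mathcal{A}}$ is homogeneous and $\KK[S]$ is graded by the last coordinate. I use \Cref{th:sturmfels-corr} in its finer form, with multiplicities: for each maximal simplex $\sigma$ of $\Delta_\omega$, the multiplicity of the minimal prime $P_\sigma=\langle x_{\bs\alpha}:\bs\alpha\notin\sigma\rangle$ in $\initial_\prec(I_{\mathcal{A}})$ equals the normalized volume $\operatorname{vol}(\sigma)$. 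This is the content of \cite[Theorem 8.8]{Sturmfels1996}, and I will sketch why it holds.

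First I compare degrees. By \Cref{cor:hilb-series-hom-ideal}, $R/\initial_\prec(I_{\mathcal{A}})$ and $R/I_{\mathcal{A}}\cong\KK[S]$ have the same Hilbert series. Hence their degrees, i.e.\ the leading coefficients of the Hilbert polynomials, agree. The degree of $\KK[S]$ equals the normalized volume of $\conv(\mathcal{A})$, because its Hilbert function counts points of $S$ at height $m$, and $S$ has finite index in $\cone(P)\cap\ZZ^{d+1}$ intersected with the group generated by $\mathcal{A}$. So the top-degree growth is $\operatorname{vol}(P)\,m^d/d!$, assuming $\mathcal{A}$ generates $\ZZ^{d+1}$, which holds since $\mathcal{A}=P\cap\ZZ^d$ contains a unimodular simplex only if\dots{} In general I normalize volume with respect to the lattice generated by $\mathcal{A}$. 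On the other side, the associativity formula gives
\[
\deg R/\initial_\prec(I_{\mathcal{A}})=\sum_{\sigma}\operatorname{mult}_{P_\sigma}\bigl(\initial_\prec(I_{\mathcal{A}})\bigr),
\]
summed over the facets of $\Delta_\omega$, while $\sum_\sigma\operatorname{vol}(\sigma)=\operatorname{vol}(P)$. To get each multiplicity at least $1$ and more precisely equal to $\operatorname{vol}(\sigma)$, I localize at $P_\sigma$, i.e.\ invert the $x_{\bs\alpha}$ with $\bs\alpha\in\sigma$. The standard monomials supported near $\sigma$ then correspond to lattice points of the half-open fundamental parallelepiped of $\sigma$, and there are exactly $\operatorname{vol}(\sigma)$ of them. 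This is the step I expect to be the main obstacle, since it requires identifying the standard monomials of $\initial_\prec(I_{\mathcal{A}})$ with the points $\bs\beta\in S$ written via their $\omega$-minimal representation, which lies in the cone of a single simplex of $\Delta_\omega$.

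With the multiplicity formula in hand, both directions are short. If $\initial_\prec(I_{\mathcal{A}})$ is squarefree, it is radical, so every associated prime is minimal with multiplicity $1$. Thus $\operatorname{vol}(\sigma)=1$ for all $\sigma$, i.e.\ $\Delta_\omega$ is unimodular. Conversely, if $\Delta_\omega$ is unimodular, the Stanley--Reisner ring $\KK[\Delta_\omega]$ has Hilbert series $h_{\Delta_\omega}(z)/(1-z)^{d+1}$ by \eqref{eq:hilb-series-sr}. Every lattice point of $\cone(P)$ then lies in $\NN\sigma$ for some simplex, uniquely written there, so the Hilbert function of $\KK[S]$ counts exactly the monomials supported on faces of $\Delta_\omega$. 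Hence $R/\sqrt{\initial_\prec(I_{\mathcal{A}})}$ and $R/\initial_\prec(I_{\mathcal{A}})$ have equal Hilbert series, using \Cref{th:sturmfels-corr} and \Cref{cor:hilb-series-hom-ideal}. Since $\initial_\prec(I_{\mathcal{A}})\subseteq\sqrt{\initial_\prec(I_{\mathcal{A}})}$ and the quotients have equal dimension in every degree, the two ideals coincide, so $\initial_\prec(I_{\mathcal{A}})$ is squarefree.
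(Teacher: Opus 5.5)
The paper gives no argument of its own beyond citing Sturmfels' Corollary~8.9, and your proposal is correct and reconstructs that standard route. You use the multiplicity formula (Sturmfels' Theorem~8.8) for the \emph{only if} direction. For the \emph{if} direction you compare the Hilbert functions of $\KK[S]$ and $\KK[\Delta_\omega]$ and use $\initial_\prec(I_{\mathcal{A}})\subseteq\sqrt{\initial_\prec(I_{\mathcal{A}})}=I_{\Delta_\omega}$; that direction is complete as written.

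Two remarks. First, the localization step you flag as the main obstacle can be bypassed. A squarefree initial ideal equals $I_{\Delta_\omega}$, so your degree comparison alone gives $\sum_\sigma\operatorname{vol}(\sigma)=\operatorname{vol}(P)=f_d(\Delta_\omega)$, which forces $\operatorname{vol}(\sigma)=1$ for every facet. Second, the sentence that breaks off after ``contains a unimodular simplex only if'' should be deleted, because $P\cap\ZZ^d$ need not generate $\ZZ^{d+1}$. For a Reeve tetrahedron with $r\geq 2$, $I_{\mathcal{A}}=0$ although its only triangulation has normalized volume $r$ in $\ZZ^3$. So normalizing volumes with respect to $\ZZ\mathcal{A}$ is not a convenience: it is what makes the \emph{only if} direction true.
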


Now we come back to our polytope setting and let $\mathcal{A} = \{\bs{\alpha}_1,\dots,\bs{\alpha}_n\} = P \cap \ZZ^d$ for a lattice polytope $P\subseteq \RR^d$ (see \Cref{sect:Prelim}). We consider the semigroup $S_P \coloneqq \cone(P) \cap \ZZ^{d+1}$ which is finitely generated (see Gordan's Lemma \cite[Theorem\ 7.16]{MillerSturmfels2005}). The \emph{Ehrhart ring} of $P$ is the semigroup algebra 
\[
\KK[P] \coloneqq \KK[S_P]= \KK[\bs{t}^{(\bs{b},m)}\; :\; (\bs{b},m) \in S_P],
\]
which carries a natural $\NN$-grading given by $\deg\bs{t}^{(\bs{b},m)} = m$. Consequently, for each $m \geq 0$, the graded component $(\KK[P])_m$ has dimension $\dim_\KK(\KK[P])_m = |mP \cap \ZZ^d| = L_P(m).$
It immediately follows, without any further assumptions on $P$, that 
\begin{align}\label{eq:hilb-ehrhart-P}
    \Hilb_{\KK[P]}(z) = \sum_{m \geq 0} \dim_\KK(\KK[P])_m z^m = \sum_{m \geq 0} L_P(m) z^m = \Ehr_P(z).
\end{align}

In the case of $P$ admitting a regular unimodular triangulation $\mathcal{T}$, we are already able to prove a special case of \Cref{th:betke-mcmullen}.

\begin{corollary}\label{cor:betke-mcmullen-spec}
    If $P \subset \RR^d$ is a lattice $d$-polytope that admits a regular unimodular triangulation $\mathcal{T}$, then $h^\ast_P(z) = h_\mathcal{T}(z)$.
\end{corollary}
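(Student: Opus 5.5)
The argument passes through commutative algebra, with the homogenized point configuration as the central object. Let $\mathcal{A}' = \{(\bs{\alpha},1) : \bs{\alpha} \in P\cap\ZZ^d\}\subseteq \ZZ^{d+1}$ and $R_P=\KK[x_{\bs{\alpha}}:\bs{\alpha}\in P\cap\ZZ^d]$ with standard grading. The toric ideal $I_{\mathcal{A}'}$ is homogeneous, since every binomial $\bs{x}^{\bs{u}}-\bs{x}^{\bs{v}}$ in it has $\sum u_i=\sum v_i$ by the last coordinate. Moreover $\conv(\mathcal{A}')\cong P$, so regular triangulations of $P$ and of $\mathcal{A}'$ coincide.

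First, identify $R_P/I_{\mathcal{A}'}$ with $\KK[P]$. By construction $R_P/I_{\mathcal{A}'}\cong \KK[\NN\{\mathcal{A}'\}]$, the subalgebra of $\KK[S_P]$ generated in degree one. Since $P$ has a unimodular triangulation, it has IDP (as recalled in \Cref{sect:Prelim}), so every $(\bs{z},m)\in S_P$ is a sum of $m$ elements of $\mathcal{A}'$. Hence $\NN\{\mathcal{A}'\}=S_P$ and $R_P/I_{\mathcal{A}'}\cong\KK[P]$ as graded algebras. By \eqref{eq:hilb-ehrhart-P}, $\Hilb_{R_P/I_{\mathcal{A}'}}(z)=\Ehr_P(z)$.

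Second, degenerate. Choose a generic weight $\omega$ inducing $\mathcal{T}$ and let $\prec$ be the term order it represents; if $\omega$ does not give a total order, refine it by any tie-breaking order, which leaves the initial complex unchanged in Sturmfels' setup. By \Cref{cor:hilb-series-hom-ideal}, $\Hilb_{R_P/I_{\mathcal{A}'}}=\Hilb_{R_P/\initial_\prec(I_{\mathcal{A}'})}$. Since $\mathcal{T}$ is unimodular, \Cref{cor:sturmfels-unimod} gives that $\initial_\prec(I_{\mathcal{A}'})$ is squarefree, hence equal to its radical. By \Cref{th:sturmfels-corr}, this radical is the Stanley-Reisner ideal $I_{\mathcal{T}}$. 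Therefore $R_P/\initial_\prec(I_{\mathcal{A}'})=\KK[\mathcal{T}]$.

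Third, compare numerators. $\mathcal{T}$ is a $d$-dimensional simplicial complex, so \eqref{eq:hilb-series-sr} gives $\Hilb_{\KK[\mathcal{T}]}(z)=h_{\mathcal{T}}(z)/(1-z)^{d+1}$. Combining the steps yields $\Ehr_P(z)=h_{\mathcal{T}}(z)/(1-z)^{d+1}$. Comparing with the definition of $h^\ast_P$, which has the same denominator, gives $h^\ast_P(z)=h_{\mathcal{T}}(z)$.

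The main obstacle is the first step: keeping track of the homogenization. The toric ideal must be taken for the lifted points $(\bs{\alpha},1)$, not for $\bs{\alpha}$ itself, so that it is homogeneous and \Cref{cor:hilb-series-hom-ideal} applies. The argument also needs IDP so that the degree-one generated algebra is all of the Ehrhart ring; without it one only gets a subalgebra. A smaller point is that the vertex set of $\mathcal{T}$ should be all of $P\cap\ZZ^d$. This holds because a unimodular simplex contains no lattice points besides its vertices, so $\mathcal{T}$ uses every lattice point and no variable is forced into the radical as a non-vertex.
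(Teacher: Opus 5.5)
Your proposal is correct and follows essentially the same route as the paper's proof. Both arguments identify $R_P/I_P$ with the Ehrhart ring $\KK[P]$ via IDP, pass to the squarefree initial ideal for a weight inducing $\mathcal{T}$ (which Sturmfels' correspondence identifies with $I_{\mathcal{T}}$), and compare Hilbert series. Your extra remarks (homogeneity via the last coordinate, tie-breaking the weight, and every lattice point of $P$ being a vertex of $\mathcal{T}$) are correct and fill in details the paper leaves implicit.
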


\begin{proof}
    To simplify notation, we set $R_P\coloneqq  \KK[x_{\bs{\alpha}}:\bs{\alpha}\in \mc{A}]$. Consider the homomorphism of graded $\KK$-algebras
    \begin{align}\label{eq:homomorphism}
        \varphi_P: R_P \longrightarrow \KK[P], \hspace{1.5 em} x_{\bs{\alpha}} \longmapsto \bs{t}^{(\bs{\alpha},1)}.
    \end{align}
    Its kernel $\ker(\varphi_P) =: I_P$ is the toric ideal associated with $\{(\bs{\alpha},1) \; : \; \bs{\alpha} \in \mathcal{A}\}$. Since $\mathcal{T}$ is unimodular, $P$ is IDP. Hence, $\varphi_P$ is surjective and
    \begin{align}\label{eq:isomorphic-P}
        R_P/I_P \cong \KK[P]
    \end{align}
    by the first isomorphism theorem. Note that $I_P$ is homogeneous and encodes the algebraic relations among the elements in $\mathcal{A}$ through binomials $\bs{x}^{\bs{u}} - \bs{x}^{\bs{v}} \in R_P$. 

    By \Cref{cor:hilb-series-hom-ideal},
    \[
    \Hilb_{R/I_P}(z) = \Hilb_{R/\initial_{\prec_w}(I_P)}(z),
    \]
    where $\prec_\omega$ is the weighted term order with respect to $\omega : \mc{A} \to \RR$ corresponding to $\mathcal{T}$. 
    
    Applying \Cref{th:sturmfels-corr}, one obtains that the radical of $\initial_{\prec_\omega}(I_P)$ equals the Stanley-Reisner ideal $I_\mathcal{T}$. In particular, by \Cref{cor:sturmfels-unimod}, unimodularity of $\mathcal{T}$ implies that $\initial_{\prec_\omega}(I_P)$ is squarefree. Hence, 
    \[
    \Hilb_{R/\initial_{\prec_\omega}(I_P)}(z) = \Hilb_{R/I_\mathcal{T}}(z) = \frac{h_\mathcal{T}(z)}{(1-z)^{d+1}},
    \]
    which, together with (\ref{eq:hilb-ehrhart-P}) and (\ref{eq:isomorphic-P}), completes the proof.
\end{proof}

%%%------------------------------ Subsection ----------------------------------------%
\subsection{Toric ideals of boundary complexes of lattice polytopes} For the boundary analogue of \Cref{cor:betke-mcmullen-spec}, we again start with no assumptions on $P$ besides being a lattice $d$-polytope in $\RR^d$. As in the proof of \Cref{cor:betke-mcmullen-spec}, we set $R_P= \KK[x_{\bs{\alpha}}:\bs{\alpha}\in \mc{A}]$ and write $\varphi_P$ for the homomorphism in \eqref{eq:homomorphism}. Moreover, throughout this section, let $n=|P\cap \ZZ^d|$. Translating the setting of the previous subsection, let $S_P^\circ \coloneqq (\cone(P)^\circ \cap \ZZ^{d+1})$ be the monoid  of all lattice points in the interior $\cone(P)^\circ$ of  $\cone(P)$. 
In order to characterize monomials in $\KK[P]$ corresponding to lattice points on the boundary, we consider the following homogeneous \emph{interior} semigroup ideal
\[
J_{P^\circ} \coloneqq \langle \bs{t}^{(\bs{b},m)} \; : \; (\bs{b},m) \in S_P^\circ\rangle \subseteq \KK[P]
\]
together with the composition of homomorphisms
\[
R_P \xrightarrow[]{\varphi_P} \KK[P] \xrightarrow[]{\pi} \KK[P]/J_{P^\circ}.
\]
Note that $\KK[P]/J_{P^\circ}$ is not a semigroup algebra, but the monoid algebra of the Rees quotient monoid $S_P/S_P^\circ$. Since $(\KK[P])_m$ has basis $\{\bs{t}^{(\bs{b},m)}\; : \; \bs{b} \in mP \cap \ZZ^d\}$ and $(J_{P^\circ})_m$ has basis $\{\bs{t}^{(\bs{b},m)}\; :\; \bs{b} \in (mP)^\circ \cap \ZZ^d\}$, it follows that
\[
\dim_\KK(\KK[P]/J_{P^\circ})_m = |mP \cap \ZZ^d| - |(mP)^\circ\cap\ZZ^d| = |\partial(mP) \cap \ZZ^d| = L_{\partial P}(m).
\]
Again, we immediately obtain
\begin{align}\label{eq:hilb-ehrhart-bdP}
    \Hilb_{\KK[P]/J_{P^\circ}}(z) = \Ehr_{\partial P}(z).
\end{align}

Our next objective is to compute the  kernel of $\pi \circ \varphi_P$ and show that it has certain properties, allowing us to use the quotient ring of $R_P$ modulo this ideal as in the non-boundary case. Define $\varphi_P^{-1}(J_{P^\circ}) \subseteq R_P$ to be the pull-back ideal of $J_{P^\circ}$. We need the following definition throughout the rest of the section: for a monomial $\bs{x}^{\bs{c}} \in R_P$ with $\bs{c} \in \NN^n$ define $\gamma(\bs{c})\coloneqq \sum_{\bs{\alpha}\in \mc{A}} c_{\bs{\alpha}}(\bs{\alpha},1)$ to be the exponent of $\varphi_P(\bs{x}^{\bs{c}})$ (as a monomial in $\bs{t}$).

\begin{proposition}\label{prop:general-prop}
    The ideal $\varphi_P^{-1}(J_{P^\circ})$ has the following properties:
    \begin{enumerate}
        \item[(i)] $I_P \subseteq \varphi_P^{-1}(J_{P^\circ}).$
        \item[(ii)] For $\bs{c} \in \NN^n$ we have: $\bs{x}^{\bs{c}} \in \varphi^{-1}_P(J_{P^\circ}) \Longleftrightarrow \gamma(\bs{c}) \in S_P^\circ$.
        \item[(iii)] $\varphi_P^{-1}(J_{P^\circ})$ is homogeneous and $\varphi_P^{-1}(J_{P^\circ}) = I_P + M_P$ with $M_P = \langle \bs{x}^{\bs{c}}: \gamma(\bs{c}) \in S_P^\circ\rangle$.
    \end{enumerate}
\end{proposition}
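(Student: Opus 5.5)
Part (i) is immediate: $I_P=\ker\varphi_P=\varphi_P^{-1}(0)\subseteq\varphi_P^{-1}(J_{P^\circ})$, since $0\in J_{P^\circ}$. For (ii), I first record the structure of $J_{P^\circ}$ as a monomial ideal of $\KK[P]$. Because $S_P^\circ$ is an ideal of the monoid $S_P$ (if $\bs{a}\in\cone(P)^\circ$ and $\bs{b}\in\cone(P)$, then $\bs{a}+\bs{b}\in\cone(P)^\circ$), the $\KK$-span of $\{\bs{t}^{\bs{a}}:\bs{a}\in S_P^\circ\}$ is already closed under multiplication by $\KK[P]$. It therefore equals $J_{P^\circ}$. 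In particular, a monomial $\bs{t}^{\bs{a}}$ with $\bs{a}\in S_P$ lies in $J_{P^\circ}$ if and only if $\bs{a}\in S_P^\circ$, because the monomials $\bs{t}^{\bs{a}}$ form a $\KK$-basis of $\KK[P]$. Since $\varphi_P(\bs{x}^{\bs{c}})=\bs{t}^{\gamma(\bs{c})}$, statement (ii) follows.

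For (iii), homogeneity holds because $\varphi_P$ is a graded homomorphism and $J_{P^\circ}$ is a homogeneous ideal; the preimage of a homogeneous ideal under a graded map is homogeneous, since each graded component of an element in the preimage maps into $J_{P^\circ}$. For the equality, the inclusion $I_P+M_P\subseteq\varphi_P^{-1}(J_{P^\circ})$ follows from (i) and (ii). For the reverse inclusion, take $f=\sum_{\bs{c}}\lambda_{\bs{c}}\bs{x}^{\bs{c}}$ with $\varphi_P(f)\in J_{P^\circ}$, and split $f=f_1+f_2$. Here $f_1$ collects the terms with $\gamma(\bs{c})\in S_P^\circ$, so $f_1\in M_P$, and $f_2$ collects the remaining terms. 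Then $\varphi_P(f_2)=\varphi_P(f)-\varphi_P(f_1)\in J_{P^\circ}$. On the other hand, $\varphi_P(f_2)$ is a linear combination of monomials $\bs{t}^{\bs{a}}$ with $\bs{a}\in S_P\setminus S_P^\circ$. By the basis description of $J_{P^\circ}$, this forces $\varphi_P(f_2)=0$, so $f_2\in I_P$.

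The main point requiring care is the description of $J_{P^\circ}$ as the span of the interior monomials, since everything else is formal. That description rests on $S_P^\circ$ being closed under adding elements of $S_P$, which is the standard fact that the interior of a convex cone plus the cone stays in the interior.
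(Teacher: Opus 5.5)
Your proof is correct and follows the paper's argument. Part (i) uses $I_P=\ker\varphi_P=\varphi_P^{-1}(0)$, part (ii) rests on $\cone(P)^\circ+\cone(P)\subseteq\cone(P)^\circ$, and part (iii) uses the same split of $f$ according to whether $\gamma(\bs{c})\in S_P^\circ$, together with the fact that preimages of homogeneous ideals under graded maps are homogeneous. The one organizational difference is that you first prove $J_{P^\circ}$ is exactly the $\KK$-span of the interior monomials; the paper relies on this only implicitly (``by what we observed above'') when it concludes $\varphi_P(f_\partial)=0$, so your version makes that step explicit.
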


\begin{proof}
    We first prove (i). Take any $f \in I_P = \ker(\varphi_P)$. Then $\varphi_P(f) = 0 \in \KK[P]$. Since $J_{P^\circ}$ is an ideal of $\KK[P]$, it contains $0$. Hence, $\varphi_P(f) \in J_{P^\circ}$, implying $f \in \varphi_P^{-1}(J_{P^\circ})$. \\

    Next, we show (ii). $\Rightarrow:$ Assume $\bs{x}^{\bs{c}} \in \varphi^{-1}_P(J_{P^\circ})$. Then $\varphi_P(\bs{x}^{\bs{c}})= \bs{t}^{\gamma(\bs{c})} \in J_{P^\circ}$. Since $J_{P^\circ}$ is generated by $\{\bs{t}^{\bs{u}}:\bs{u} \in S_P^\circ\}$, it follows that 
    \[
    \gamma(\bs{c}) = \bs{s} + \bs{s}^\circ
    \]
    for some $\bs{s} \in S_P, \bs{s}^\circ \in S_P^\circ$ (see \cite{MillerSturmfels2005}).

    Since for any rational polyhedral $(d+1)$-dimensional cone $C \subset \RR^{d+1}$, one has
    \[
    C^\circ + C \subseteq C^\circ,
    \]
    we conclude that $\gamma(\bs{c}) = \bs{s}+\bs{s}^\circ \in \cone(P)^\circ$. Since $\gamma(\bs{c})$ is integral, $\gamma(\bs{c}) \in S_P^\circ$.

    \noindent $\Leftarrow:$ Assume $\gamma(\bs{c}) \in S_P^\circ$. Then $\varphi_P(\bs{x}^{\bs{c}})=\bs{t}^{\gamma(\bs{c})}\in J_{P^\circ}$, because $\bs{t}^{\gamma(\bs{c})}$ is one of the generators of $J_{P^\circ}$. Hence, $\bs{x}^{\bs{c}} \in \varphi^{-1}_P(J_{P^\circ})$.\\

    For (iii), we begin with the decomposition of $\varphi_P^{-1}(J_{P^\circ})$:

    The inclusion $\supseteq$ directly follows from (i) and the observation that $\bs{x}^{\bs{c}}\in\varphi_P^{-1}(J_{P^\circ})$  for every generator $\bs{x}^{\bs{c}}$ of $M_P$.

    The inclusion   $\subseteq$ is a bit more involved. Let $f \in \varphi^{-1}_P(J_{P^\circ})$. We write $f$ as a finite $\KK$-linear combination of monomials and apply $\varphi_P$:
    \begin{align}\label{eq:decomposition-image}
        \varphi_P(f) = \sum_{j=1}^r \lambda_j \bs{t}^{\gamma(\bs{c}^{(j)})} \in J_{P^\circ}, \hspace{1.5 em} \lambda_j \in \KK, \; \bs{c}^{(j)} \in \NN^n.
    \end{align}
    Grouping together terms with equal exponents, we can rewrite (\ref{eq:decomposition-image}) as
    \[
    \varphi_P(f) = \sum_{\bs{u}\in S_P} \alpha_{\bs{u}} \bs{t}^{\bs{u}}
    \]
    with only finitely many nonzero $\alpha_{\bs{u}} \in \KK$.  Split $f$ into two parts: $f_{\circ}$ is the sum of those terms $\lambda_j\bs{x}^{\bs{c}^{(j)}}$ with $\gamma(\bs{c}^{(j)})\in S_P^\circ$ and $f_\partial$ denotes the sum of those terms $\lambda_j\bs{x}^{\bs{c}^{(j)}}$ with $\gamma(\bs{c}^{(j)})\notin S_P^\circ$.
    By definition, $f_\circ \in M_P$. It remains to show $f_\partial \in I_P$. Applying $\varphi_P$ we get:
    \[
    \varphi_P(f_\partial) = \sum_{\gamma(\bs{c}^{(j)})\notin S_P^\circ} \lambda_j \bs{t}^{\gamma(\bs{c}^{(j)})}.
    \]

    By what we observed above, all coefficients $\alpha_{\bs{u}}$ of monomials $\bs{t}^{\bs{u}}$ with $\bs{u} \notin S_P^\circ$ must be zero. Since $\varphi_P(f_\circ) \in J_{P^\circ}$ contributes only monomials with exponents in $S_P^\circ$, it cannot affect those coefficients. Hence, every coefficient of $\varphi_P(f_\partial)$ must be zero, i.e.\ $\varphi_P(f_\partial) = 0$. Thus $f_\partial \in \ker(\varphi_P) = I_P$. Therefore $f = f_\circ + f_\partial \in M_P + I_P$, proving the inclusion $\subseteq$. \\
    
    Finally, we complete the proof of (iii): Since $\varphi_P$ is a graded ring homomorphism and $J_{P^\circ}$ is homogeneous, the homogeneity of $\varphi_P^{-1}(J_{P^\circ})$ follows from the general fact that pullbacks of homogeneous ideals under graded homomorphisms remain homogeneous.
\end{proof}

To move from $\KK[P]/J_{P^\circ}$ to $R_P/\varphi^{-1}_P(J_{P^\circ})$ requires $\pi \circ \varphi_P$ to be surjective. This holds, as the next lemma will show, if $\partial P$ admits a unimodular triangulation $\Delta$, enabling us to follow up on (\ref{eq:hilb-ehrhart-bdP}) in the following way:

\begin{lemma}\label{lem:unimod-prop}
    Let $P\subseteq \RR^d$ be a lattice $d$-polytope. If $\partial P$ admits a unimodular triangulation $\Delta$, then, $\pi \circ \varphi_P$ is surjective (onto the boundary quotient $\KK[P]/J_{P^\circ}$) with $\varphi_P^{-1}(J_{P^\circ}) = \ker(\pi \circ \varphi_P)$ and $R_P/\varphi_P^{-1}(J_{P^\circ}) \cong \KK[P]/J_{P^\circ}$.
\end{lemma}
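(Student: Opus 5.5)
The plan is to reduce surjectivity of $\pi\circ\varphi_P$ to a lattice-point decomposition statement on the boundary. Then the identification of the kernel and the isomorphism follow formally from the first isomorphism theorem. As noted before \eqref{eq:hilb-ehrhart-bdP}, for each $m$ the graded piece $(\KK[P]/J_{P^\circ})_m$ has $\KK$-basis the residue classes of $\bs{t}^{(\bs{b},m)}$ with $\bs{b}\in\partial(mP)\cap\ZZ^d$. Since $\pi\circ\varphi_P$ is a graded $\KK$-algebra map, it suffices to show that each such basis element lies in the image. Concretely, for every $\bs{b}\in \partial(mP)\cap \ZZ^d$ we need lattice points $\bs{y}_1,\dots,\bs{y}_m\in \mc{A}$ with $\bs{b}=\bs{y}_1+\cdots+\bs{y}_m$; then $\prod_i x_{\bs{y}_i}$ maps to $\bs{t}^{(\bs{b},m)}$.

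The key step is this boundary IDP statement, and it is where the unimodular triangulation $\Delta$ of $\partial P$ enters. Since $\bs{b}\in\partial(mP)$, the point $\bs{b}/m$ lies in $\partial P$. Hence it lies in some maximal simplex $\sigma=\conv(\bs{v}_0,\dots,\bs{v}_{d-1})$ of $\Delta$, which is contained in a facet $F$ of $P$. Unimodularity of $\sigma$ means the vectors $\bs{v}_1-\bs{v}_0,\dots,\bs{v}_{d-1}-\bs{v}_0$ form a basis of the lattice $\mathrm{aff}(F)\cap\ZZ^d$ (taken relative to the affine lattice). Equivalently, the lifted points $(\bs{v}_i,1)$ form a $\ZZ$-basis of the rank-$d$ lattice $\mathrm{lin}(F\times\{1\})\cap\ZZ^{d+1}$. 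Therefore $(\bs{b},m)$, which lies in $\cone(\sigma)\cap \ZZ^{d+1}$, has the form $\sum_i \lambda_i(\bs{v}_i,1)$ with the $\lambda_i$ simultaneously nonnegative (because it lies in the cone) and integral (because it lies in the lattice). Comparing last coordinates gives $\sum\lambda_i=m$, so $\bs{b}$ is a sum of $m$ vertices of $\sigma$, all of which lie in $\mc{A}$. This yields surjectivity.

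It remains to prove the two identities. We have $\ker(\pi\circ\varphi_P)=\varphi_P^{-1}(\ker\pi)=\varphi_P^{-1}(J_{P^\circ})$; this holds for any composite and does not need surjectivity. The first isomorphism theorem then gives $R_P/\varphi_P^{-1}(J_{P^\circ})\cong \KK[P]/J_{P^\circ}$, as graded algebras since all maps are graded.

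The main obstacle I expect is the lattice step: checking carefully that a unimodular $(d-1)$-simplex in the facet $F$ yields a $\ZZ$-basis of $\mathrm{lin}(F\times\{1\})\cap\ZZ^{d+1}$, and not merely of a sublattice. This requires interpreting ``normalized volume $1$'' relative to the affine lattice of the facet, and noting that $\mathrm{aff}(F)$ is a rational affine hyperplane. Once that is settled, the remainder is formal.
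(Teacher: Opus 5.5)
Your proposal is correct and follows the paper's proof. Both reduce surjectivity to writing each $\bs{b}\in\partial(mP)\cap\ZZ^d$ as a sum of $m$ lattice points of $\partial P$, and both obtain the kernel identity and the isomorphism formally from the first isomorphism theorem. The only difference is that you verify the integer-decomposition step directly inside a unimodular simplex $\sigma\ni\bs{b}/m$ of $\Delta$, and your lattice-basis argument is sound, whereas the paper simply cites that the unimodularly triangulated facet $F$ is IDP.
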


\begin{proof}
    The equality $\varphi_P^{-1}(J_{P^\circ}) = \ker(\pi \circ \varphi_P)$ holds by definition. For surjectivity, fix $m\geq 0$. Recall that by construction, $(\KK[P]/J_{P^\circ})_m$ is spanned by the residue classes of monomials $\bs{t}^{(\bs{b},m)}$ with $\bs{b} \in \partial(mP) \cap \ZZ^d$. Let $\bs{b} \in \partial(mP)\cap \ZZ^d$. Then $\bs{b} \in mF$ for a (not necessarily unique) facet $F$ of $P$. Since $\Delta$ is unimodular, its restriction $\Delta_{|F}$ is a unimodular triangulation of the lattice $(d-1)$-polytope $F$. Hence, $F$ is IDP in its affine lattice, i.e., there exist $\bs{y}_1, \dots,\bs{y}_m \in mF \cap \ZZ^d \subseteq \partial P \cap \ZZ^d$ with
    \[
    \bs{b} = \bs{y}_1+\cdots+\bs{y}_m.
    \]
    Therefore, $\bs{t}^{(\bs{b},m)} = \bs{t}^{(\bs{y}_1,1)} \cdots \bs{t}^{(\bs{y}_m,1)}$ in $\KK[P]$, so the class of $\bs{t}^{(\bs{b},m)}$ in $\KK[P]/J_{P^\circ}$ lies in the image of $\pi \circ \varphi_P$. Since these classes span $(\KK[S]/J_{P^\circ})_m$, the map $\pi \circ \varphi_P$ is surjective in each degree, hence surjective. The last part then follows by the first isomorphism theorem.
\end{proof}

If $\partial P$ admits a unimodular triangulation, we can conclude from \Cref{lem:unimod-prop} the following identity of Hilbert series:
\begin{align}\label{eq:hilb-series-hom-ideal-P}
    \Hilb_{\KK[P]/J_{P^\circ}}(z) = \Hilb_{R_P/\varphi^{-1}_P(J_{P^\circ})}(z).
\end{align}
We call $I_{\partial P}\coloneqq\varphi^{-1}_P(J_{P^\circ})$ the \emph{toric boundary ideal} of  the point configuration $P$. Note that such an ideal can be associated to any point configuration  $\mathcal{A}$ and its boundary configuration. We also want to note that the name \emph{toric} might be misleading since $I_{\partial P}$ is not a toric ideal in the usual sense but rather consists of a toric and a monomial component. 

%%%------------------------------ Subsection ----------------------------------------%
\subsection{The arithmetic and face structure of $\partial P$} Before we can prove the main result of our work, we have to show that a regular triangulation $\Delta$ of $\partial P$ can be extended to a regular triangulation $\mc{T}$ of $P$, allowing us to work with initial ideals with respect to a weighted term order induced by this extension. If the boundary triangulation $\Delta$ is induced by a weight function $\omega_\partial : \mc{B} \to \RR$ with $\mc{B} \coloneqq \partial P \cap \ZZ^d$, $\mc{B} \subseteq \mc{A}=P\cap \ZZ^d$, the following two results are essential.%
%The proofs are tedious and the reader might want to skip them on the first read-through and come back to them later.

\begin{lemma}\label{lem:subdivision-facets}\cite[Lemma 2.3.15]{DeLoeraRambauSantos2010}
    Let $P \subset \RR^d$ be a $d$-polytope, $\mathcal{A} \subset P$ be a finite point configuration containing all vertices of $P$  and $\omega:\mathcal{A}\to \RR$ be any weight function. Let $\mc{S}_\omega(P)$ be the regular subdivision of $P$ induced by $\omega$ via the lifting construction. Then, for every face $F$ of $P$, the induced subdivision on $F$ is exactly the regular subdivision $\mc{S}_{\omega_{|\mathcal{A}\cap F}}(F)$ induced by $\omega_{|\mathcal{A}\cap F}$. 
\end{lemma}

%\begin{proof}
 %   Consider the lifted point set $\Tilde{\mathcal{A}} \coloneqq \{(\bs{a},\omega(\bs{a})):\bs{a}\in A\} \subset \RR^{d+1}$ and set $Q\coloneqq \text{conv}(\Tilde{\mathcal{A}})$. The regular subdivision $\mc{S}_\omega(P)$ consists of the projections of the lower faces of $Q$ to $\RR^d$. Now fix a face $F$ of $P$ and choose a supporting hyperplane $H \subset \RR^d$. Then the lifted points with base in $F$ all lie in the affine subspace $H \times \RR \subset \RR^{d+1}$. Moreover,
  %  \[
    %\text{conv}\{(\bs{a},\omega(\bs{a})):\bs{a}\in \mathcal{A} \cap F\} = Q \cap (H \times \RR),
   % \]
   %since  intersecting $\text{conv}(\Tilde{\mathcal{A}})$ with the vertical affine subspace over $H$ removes exactly those vertices whose base point is not in $H$.
%
    %A face $G$ of $Q \cap (H \times \RR)$ is a lower face if and only if it is the intersection of a lower face of $Q$  with $H \times \RR$  Therefore, if one first projects lower faces of $Q$ to $\RR^d$ and  then restricts to $F$, one gets precisely the projections of lower faces of $\text{conv}\{\bs{a},\omega(\bs{a})):\bs{a}\in \mathcal{A} \cap F\}$. This is exactly the regular subdivision induced on $F$ by the restricted weight function $\omega_{|A \cap F}$. 
%\end{proof}

Consequently, if $\omega:\mc{A}\to \RR$ extends $\omega_\partial:\mc{B}\to\RR$, then for every facet $F$ of $P$,
\begin{align}
    \mc{S}_\omega(P)_{|F} = \mc{S}_{\omega_\partial}(F)=\Delta_{|F},
\end{align}
i.e., boundary regularity is preserved by any extension. 
The second lemma is constructive in extending $\omega_\partial$ to $\omega$ such that the induced triangulation $\mc{T}$ of $P$ is regular, while $\Delta$ remains unchanged under $\omega$.

\begin{lemma}\label{lem:extension-weight-fct}
    Let $P \subset \RR^d$ be a  lattice  $d$-polytope with $\mathcal{A}=P\cap \ZZ^d$ and $\mathcal{B}=\partial P\cap \ZZ^d$. Let $\omega_\partial :\mc{B} \to \RR$ be a weight function and let $\Delta$ be the induced regular triangulation  of $\partial P$. Then there exists an extension $\omega:\mc{A}\to\RR$ such that the boundary complex of the regular subdivision $\mc{S}_\omega(P)$ of $P$ equals $\Delta$, i.e.,  $\partial \mc{S}_\omega(P) = \Delta$. 
\end{lemma}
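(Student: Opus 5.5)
The plan is to choose the extension $\omega$ so that interior lattice points are lifted very high. Then they cannot disturb the lower hull near the boundary.

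Concretely, set $\omega(\bs{b}) = \omega_\partial(\bs{b})$ for $\bs{b}\in\mc{B}$ and $\omega(\bs{a}) = K$ for every $\bs{a}\in\mc{A}\setminus\mc{B}$, with $K$ large to be fixed later. By \Cref{lem:subdivision-facets}, for every facet $F$ of $P$ the subdivision $\mc{S}_\omega(P)$ restricts on $F$ to $\mc{S}_{\omega|_{\mc{A}\cap F}}(F)$. Since $\mc{A}\cap F\subseteq\mc{B}$, this is $\mc{S}_{\omega_\partial|_{F}}(F)=\Delta_{|F}$. So every cell of $\mc{S}_\omega(P)$ contained in $\partial P$ is a simplex of $\Delta$, and these cells cover $\partial P$.

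The remaining step is to identify $\partial\mc{S}_\omega(P)$, as defined in the preliminaries, with the set of cells lying in $\partial P$. A $(d-1)$-cell $G$ of a subdivision of the convex body $P$ lies in exactly one $d$-cell if and only if $G\subseteq\partial P$. If $G$ meets the interior of $P$, points just on either side of its relative interior belong to $d$-cells, giving two such cells. Conversely, if $G\subseteq\partial P$, then $G$ lies in a facet of $P$ and all $d$-cells are on one side of it. So the boundary faces are exactly the faces of cells contained in $\partial P$, and these are the simplices of $\Delta_{|F}$ over all facets $F$. Gluing over facets, which is consistent because restrictions to ridges agree, gives $\partial\mc{S}_\omega(P)=\Delta$.

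I expect the main obstacle to be that, for \emph{any} extension, \Cref{lem:subdivision-facets} already gives the restriction to facets. The real content therefore lies in the convention that weight functions are defined on all of $\mc{A}$ but only vertices of the lower hull appear in cells. One must check that some interior point does not end up as a vertex of a cell contained in $\partial P$. This is impossible, since such a cell lies in a face of $P$ and its vertices come from $\mc{A}\cap F$.

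Choosing $K$ large is not needed for the statement as written. I would still keep it, since it makes the interior points non-vertices of low cells near the boundary. It also sets up a later perturbation of $\omega$ on interior points to a generic function, so that $\mc{S}_\omega(P)$ becomes a triangulation $\mc{T}$. Such a perturbation does not change $\Delta$, again by \Cref{lem:subdivision-facets}.
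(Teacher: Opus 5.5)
Your argument is correct and is essentially the paper's: any extension of $\omega_\partial$ works, because \Cref{lem:subdivision-facets} forces $\mc{S}_\omega(P)_{|F}=\Delta_{|F}$ on every facet $F$. The paper puts weight $0$ rather than $K$ on interior points, and you additionally check that the boundary faces in the sense of the preliminaries are exactly the cells lying in $\partial P$, which the paper leaves implicit.

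One caveat on your closing aside, which goes beyond the statement but matters for how the paper uses the lemma later (to get a triangulation $\mc{T}$ with $\partial\mc{T}=\Delta$): perturbing $\omega$ only on interior points does not in general produce a triangulation. For $P=[0,1]^2$ with $\omega_\partial\equiv 0$ there are no interior lattice points, and $\mc{S}_\omega(P)$ is the whole square. Moreover, with $K$ large the interior points are unused, so small changes to their weights change nothing. This is why the paper instead refines by a generic perturbation of all weights \cite[Corollary 2.3.18]{DeLoeraRambauSantos2010}; this leaves $\Delta$ intact because each $\Delta_{|F}$ is already a triangulation.
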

Indeed, it is true that such an extension can be chosen from an open dense set in $\{\omega_\partial\}\times \RR^{\mathcal{A} \setminus \mathcal{B}}$. An extension as in the lemma will be referred to as $\omega_{\partial}$-\emph{compatible} extension. 
%Though we assume that this lemma is well-known, we could not find it in the literature and therefore include a proof.
Though we assume the statement to be well-known, we include an easy proof since we could not find it in the literature.
\begin{proof}
    Start with any extension $\omega^{(0)}:\mc{A} \to \RR$ for $\omega_\partial$. For instance, set $\omega^{(0)}(\bs{a}) = \omega_\partial(\bs{a})$ for $\bs{a} \in \mc{B}$ and $\omega^{(0)}(\bs{a}) = 0$ for $\bs{a} \in \mc{A}\setminus \mc{B}$. 
    
    $\omega^{(0)}$ induces a regular subdivision $\mc{S}_{\omega^{(0)}}$ of $P$, by definition and, by \Cref{lem:subdivision-facets}, the restriction of $\mc{S}_{\omega^{(0)}}$ to any facet $F$ of $P$ equals $\Delta_{|F}$. In particular, the subdivision induced on $\partial P$ is $\Delta$ (since $\Delta$ is exactly the collection of these facet restrictions glued compatibly).
    The claim follows directly, if $\mc{S}_{\omega^{(0)}}$ is already a triangulation. If not, we can refine $\mc{S}_{\omega^{(0)}}$ to a regular triangulation (see e.g., \cite[Corollary 2.3.18]{DeLoeraRambauSantos2010}).
\end{proof}

Now we can continue with (\ref{eq:hilb-series-hom-ideal-P}) in order to prove the boundary analogue of \Cref{th:sturmfels-corr} in our setting of lattice $d$-polytopes. Let $\Delta$ be the regular triangulation of $\partial P$ induced by the weight function $\omega_\partial :\mc{B} \to \RR$. Let $\omega:\mc{A}\to \RR$ be a global weight function  extending $\Delta$ to a regular triangulation $\mc{T}$ of $P$ with $\partial \mc{T} = \Delta$ (see \Cref{lem:extension-weight-fct}) and let $\prec_\omega$ be the corresponding term order. We consider 
\[
\initial_{\prec_{\omega}}(I_{\partial P})= \initial_{\prec_\omega}(I_P + M_P),
\]
where the identity follows by \Cref{prop:general-prop} (iii). Since monomials are fixed by taking initial ideals and initial ideals commute with sums when one summand is monomial (see \cite{CoxLittleOShea2020,MillerSturmfels2005,Sturmfels1996}), we get
\begin{align}\label{eq:decomposition-J}
    \initial_{\prec_{\omega}}(I_{\partial P}) = \initial_{\prec_\omega}(I_P) + M_P.
\end{align}

The next lemma tells us what restricting to a facet $F$ of $P$ means for the initial ideal $\initial_{\prec_{\omega}}(I_{\partial P})$.

\begin{lemma}\label{lem:restriction-J-facets}
    Let $\Delta$ be the regular triangulation of $\partial P$ induced by $\omega_\partial$ and let $\omega$ be a $\omega_\partial$-compatible extension. Further, for a facet $F$ of $P$, let $\mc{A}_F \coloneqq F \cap \ZZ^d$, $R_F \coloneqq \KK[x_{\bs{\alpha}}:\bs{\alpha} \in \mc{A}_F] \subseteq R_P$ and let 
    \[
    \varphi_F:R_F \to \KK[S_F], \hspace{1.5 em} x_{\bs{\alpha}}\mapsto \bs{t}^{(\bs{\alpha}_i,1)}
    \]
    be the semigroup map associated to $F$, where $S_F\coloneqq \cone(F) \cap \ZZ^{d+1}$. Let $I_F\coloneqq\ker(\varphi_F) \subseteq R_F$ denote the toric ideal of $\mc{A}_F$ and let $\omega_F$ be the restriction of $\omega$ to the lattice points in $\mathcal{A}_F$. %variables of $R_F$. 
    Then 
    \[
    \initial_{\prec_{\omega}}(I_{\partial P}) \cap R_F = \initial_{\prec_{\omega_F}}(I_F).
    \]
\end{lemma}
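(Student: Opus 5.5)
The plan is to reduce the statement to a fiber-by-fiber comparison of monomials. Start from the decomposition \eqref{eq:decomposition-J}, $\initial_{\prec_\omega}(I_{\partial P}) = \initial_{\prec_\omega}(I_P) + M_P$. Both summands are monomial ideals, so their sum is a monomial ideal. Its intersection with the subring $R_F$ is therefore the $\KK$-span of those monomials $\bs{x}^{\bs{c}}$ with $\operatorname{supp}(\bs{c}) \subseteq \mc{A}_F$ that lie in one of the two summands. Likewise, $\initial_{\prec_{\omega_F}}(I_F)$ is a monomial ideal in $R_F$. It thus suffices to prove, for every $\bs{c}$ supported on $\mc{A}_F$,
\[
\bs{x}^{\bs{c}} \in \initial_{\prec_\omega}(I_P) + M_P \iff \bs{x}^{\bs{c}} \in \initial_{\prec_{\omega_F}}(I_F).
\]

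\emph{Step 1: no monomial of $M_P$ lies in $R_F$.} Let $\ell$ be a linear functional defining the facet $\cone(F)$ of $\cone(P)$, i.e.\ $\ell \geq 0$ on $\cone(P)$ with $\ell^{-1}(0)\cap \cone(P) = \cone(F)$. If $\bs{c}$ is supported on $\mc{A}_F$, then $\ell(\gamma(\bs{c})) = 0$. Hence $\gamma(\bs{c}) \in \partial\,\cone(P)$, so $\gamma(\bs{c}) \notin S_P^\circ$. By \Cref{prop:general-prop}(ii), $\bs{x}^{\bs{c}}\notin M_P$. Since a monomial lies in a monomial ideal only if it is divisible by a generator, and all divisors of $\bs{x}^{\bs{c}}$ are again supported on $\mc{A}_F$, we get $M_P\cap R_F = 0$ on the level of monomials. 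It remains to show that $\initial_{\prec_\omega}(I_P)\cap R_F = \initial_{\prec_{\omega_F}}(I_F)$.

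\emph{Step 2: fiber description of initial ideals of toric ideals.} Since $I_P$ is spanned by the binomials $\bs{x}^{\bs{u}}-\bs{x}^{\bs{v}}$ with $\gamma(\bs{u})=\gamma(\bs{v})$, I will use the standard fact (\cite{Sturmfels1996}) that a monomial $\bs{x}^{\bs{u}}$ lies in $\initial_\prec(I_P)$ if and only if it is not the $\prec$-minimal monomial in its fiber $\{\bs{x}^{\bs{v}} : \gamma(\bs{v}) = \gamma(\bs{u})\}$. The same holds for $I_F$, with fibers taken inside $R_F$.

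\emph{Step 3: the fibers agree.} If $\bs{u}$ is supported on $\mc{A}_F$ and $\gamma(\bs{v})=\gamma(\bs{u})$, then
\[
0=\ell(\gamma(\bs{u}))=\sum_{\bs{\alpha}} v_{\bs{\alpha}}\,\ell(\bs{\alpha},1).
\]
Every term in this sum is nonnegative, and $\ell(\bs{\alpha},1)>0$ for $\bs{\alpha}\notin F$. Hence $\bs{v}$ is also supported on $\mc{A}_F$. Thus the $R_P$-fiber of $\bs{x}^{\bs{u}}$ coincides with its $R_F$-fiber. Moreover, $\prec_{\omega}$ restricted to monomials of $R_F$ is exactly $\prec_{\omega_F}$, since the weights are computed from $\omega_F=\omega_{|\mc{A}_F}$; any tie-breaking refinement is likewise restricted. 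So $\bs{x}^{\bs{u}}$ is non-minimal in one fiber iff it is non-minimal in the other. By Step 2, this gives the claimed equality.

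The main obstacle is Step 2 together with the genericity of the order. One must ensure that $\prec_\omega$ is a genuine term order (refining $\omega$ by a fixed tie-breaker if $\omega$ is not generic), and that the restriction of this refined order to $R_F$ is the order used to define $\initial_{\prec_{\omega_F}}(I_F)$. I would handle this by fixing one tie-breaking term order on $R_P$ once and for all and using its restriction on $R_F$. For the fiber characterization itself, I would argue directly: if $\bs{x}^{\bs{u}}$ is non-minimal in its fiber, the binomial $\bs{x}^{\bs{u}}-\bs{x}^{\bs{v}}$ with minimal $\bs{x}^{\bs{v}}$ has leading term $\bs{x}^{\bs{u}}$. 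Conversely, standard monomials are linearly independent modulo $I_P$, and the fiber-minimal monomials map to distinct basis elements $\bs{t}^{\bs{s}}$.
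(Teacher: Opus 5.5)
Your proposal is correct and follows essentially the same route as the paper. Both arguments first show that no $F$-supported monomial lies in $M_P$, because $\gamma(\bs{c})$ lies in the proper face $\cone(F)$, and then use the facet functional to show that every monomial in the same $\gamma$-fiber as an $F$-supported monomial is itself $F$-supported. The difference is only in packaging: you use the characterization of standard monomials as fiber-minimal monomials, which also makes explicit the paper's step ``by minimality of $f$, all $\gamma(\bs{c}^{(j)})$ agree'' and the tie-breaking issue, whereas the paper works with a minimal-support polynomial $f\in I_P$ and the identity $I_P\cap R_F=I_F$.
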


\begin{proof}
    By (\ref{eq:decomposition-J}) it is enough to understand the intersections of the two summands of $\initial_{\prec_{\omega}}(I_{\partial P})$ with $R_F$. We proceed in several steps.\\

    \noindent\textit{Step 1:} $M_P \cap R_F = 0$. Let $\bs{x}^{\bs{c}} \in R_F$ be a monomial. Then all variables appearing in $\bs{x}^{\bs{c}}$ correspond to lattice points of $F$. Hence
    \[
    \gamma(\bs{c}) = \sum_{\bs{\alpha}\in \mathcal{A}_F} c_{(\bs{\alpha},1)} \in\cone(F).
    \]
    Since $\cone(F)$ is a proper face of $\cone(P)$, we have  $\gamma(\bs{c}) \notin S_P^\circ$ and hence $\bs{x}^{\bs{c}} \notin M_P$. Consequently, $\initial_{\prec_{\omega}}(I_{\partial P}) \cap R_F = \initial_{\prec_\omega}(I_P) \cap R_F$. \\

    \noindent\textit{Step 2:} $I_P \cap R_F = I_F$. The inclusion $I_F \subseteq I_P \cap R_F$ is immediate since $\varphi_P|_{R_F}$ agrees with $\varphi_F$. For the reverse inclusion, let $f \in I_P \cap R_F$. Since $f \in R_F$, every monomial of $f$ is supported on variables corresponding to lattice points of $F$. This implies $\varphi_F(f) = \varphi_P(f)=0$ and hence, $f \in I_F$.\\

    \noindent\textit{Step 3:} $\initial_{\prec_\omega}(I_P) \cap R_F = \initial_{\prec_{\omega_F}}(I_F)$. To prove $\supseteq$, let $p \in \initial_{\prec_{\omega_F}}(I_F)$. Then there exists $f \in I_F$ with $\initial_{\prec_{\omega_F}}(f) = p_0$ such that  $p_0$ divides $p$. Since $I_F \subseteq I_P$ and all monomials of $f$ lie in $R_F$, we have 
    \[
    \initial_{\prec_\omega}(f) = \initial_{\prec_{\omega_F}}(f) = p_0 \in \initial_{\prec_\omega}(I_P)
    \]
    and, since $\initial_{\prec_\omega}(I_P)$ is a monomial ideal, it follows that $p \in \initial_{\prec_\omega}(I_P)$. Clearly, $p \in R_F$, so $p \in \initial_{\prec_\omega}(I_P) \cap R_F$.
    
    To prove the inclusion $\subseteq$, let $\bs{x}^{\bs{c}} \in \initial_{\prec_\omega}(I_P) \cap R_F$ be a monomial. By definition of the initial ideal, there exists $f \in I_P$ be such that  $\initial_{\prec_{\omega}}(f) = \bs{x}^{\bs{c}}$. Choose $f$ minimal with this property, i.e., there is no polynomial $g\in I_P$ whose support is strictly contained in the support of $f$ such that $\initial_{\prec_{\omega}}(g)=\bs{x}^{\bs{c}}$. 
    %Since $ \bs{x}^{\bs{c}}\in R_F$, every variable appearing in $\bs{x}^{\bs{c}}$ corresponds to a lattice point in $F$.  
    We show that  every monomial in the support of $f$ is supported on $\mathcal{A}_F$. Note that this is true for  $\bs{x}^{\bs{c}}$, by assumption. Write 
    \[
    f = \bs{x}^{\bs{c}}+\sum_{j=1}^r \lambda_j \bs{x}^{\bs{c}^{(j)}},
    \]
    with  $\bs{c}^{(j)} \in \NN^n$. Since $f \in I_P$, we have $\varphi_P(f)=0$ and, due to minimality of $f$, it follows that $\gamma(\bs{c}^{(j)}) = \gamma(\bs{c})$ for all $1\leq j\leq r$. Since $\bs{x}^{\bs{c}} \in R_F$, we conclude that $\gamma(\bs{c}) \in \cone(F)$.% But $\gamma(\bs{c})$ is exactly the common semigroup degree $\bs{v}$, so $\bs{v} \in \cone(F)$. 

    Now we choose a supporting affine functional $\ell:\RR^d \to \RR$ for the facet $F$, normalized so that $\ell(\bs{x}) \leq 1$ for all $\bs{x} \in P$ and $\ell(\bs{x}) = 1$ for all $\bs{x} \in F$. Define the associated linear functional on $\RR^{d+1}$:
    \[
    \Tilde{\ell}(\bs{x},s) = \ell(\bs{x}) - s.
    \]
    Then $\Tilde{\ell}(\bs{\alpha},1) \leq 0$ for every $\bs{\alpha} \in \mc{A}$ and $\Tilde{\ell}(\bs{\alpha},1) = 0$ if and only if $\bs{\alpha} \in F$. Moreover, $\cone(F) = \cone(P) \cap \{\Tilde{\ell} = 0\}$. 
    In particular, since $\gamma(\bs{c}^{(j)})=\gamma(\bs{c})\in \cone(F)$, it follows that
    
    \[
    0=\Tilde{\ell}(\gamma(\bs{c}^{(j)})) = \sum_{\bs{\alpha}\in\mathcal{A}} c_{\bs{\alpha}}^{(j)} \Tilde{\ell}(\bs{\alpha},1),
    \]
    %Take any monomial $\bs{x}^{\bs{c}^{(j)}}$ appearing in $f$. Since its semigroup degree is $\bs{v} \in \cone(F)$, we have
    %\[
   % \gamma(\bs{c}^{(j)}) = \bs{v} \hspace{1em} \Longrightarrow \hspace{1em} \Tilde{\ell}(\gamma(\bs{c}^{(j)}) = 0.
   % \]
   Each summand in the last sum satisfies $c_{\bs{\alpha}}^{(j)} \cdot \Tilde{\ell}(\bs{\alpha},1) \leq 0$, because $c_{\bs{\alpha}}^{(j)} \geq 0$ and $\Tilde{\ell}(\bs{\alpha},1) \leq 0$. Using that the whole sum is zero, this implies that  every summand with $c_{\bs{\alpha}}^{(j)} > 0$ must satisfy $\Tilde{\ell}(\bs{\alpha},1) = 0$. Therefore, every variable appearing in $\bs{x}^{\bs{c}^{(j)}}$ corresponds to a lattice point $\bs{\alpha} \in F$. It follows that $\bs{x}^{\bs{c}^{(j)}}\in R_F$, for $1\leq j\leq r$ and hence $f \in R_F$. Since $f \in I_P$, \emph{Step 2} implies that  $f \in I_P \cap R_F = I_F$.

   Finally, since all monomials in $f$ lie in $R_F$, we conclude
   \[
   \bs{x}^{\bs{c}}= \initial_{\prec_\omega}(f)=\initial_{\prec_{\omega_F}}(f)\in  \initial_{\prec_{\omega_F}}(I_F),
   \]
   finishing the proof of the second inclusion.
\end{proof}

\noindent We are now prepared to prove our first  main result of this section.

\begin{theorem}\label{th:sturmfels-bd-corr}
    Let $P \subset \RR^d$ be a lattice $d$-polytope with $\mathcal{A}=P\cap \ZZ^d$ and $\mathcal{B}=\partial P\cap \ZZ^d$.   The regular triangulations of the boundary complex $\partial P$ are the initial complexes of $I_{\partial P}$. More precisely, if $\omega_\partial\in \RR^{|\mathcal{B}|}$ induces the regular triangulation $\Delta$ of $\partial P$, and $\omega\in \RR^{|\mathcal{A}|}$ is a $\omega_\partial$-compatible extension, then
    \[
    \sqrt{\initial_{\prec_\omega}(I_{\partial P})} = I_\Delta.
    \]
\end{theorem}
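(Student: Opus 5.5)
The plan is to compare the two radical monomial ideals through their squarefree monomials, i.e., through faces and non-faces. Both $\sqrt{\initial_{\prec_\omega}(I_{\partial P})}$ and $I_\Delta$ are squarefree monomial ideals in $R_P$. Here $I_\Delta$ is the Stanley--Reisner ideal of $\Delta$, viewed as a complex on the vertex set $\mathcal{A}$, so every interior lattice point is a non-face. It therefore suffices to show, for every subset $\sigma\subseteq\mathcal{A}$, that $\bs{x}^\sigma\in\sqrt{\initial_{\prec_\omega}(I_{\partial P})}$ if and only if $\sigma\notin\Delta$. Throughout I use the decomposition \eqref{eq:decomposition-J}, which gives $\sqrt{\initial_{\prec_\omega}(I_{\partial P})}=\sqrt{\initial_{\prec_\omega}(I_P)+M_P}$. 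I also use \Cref{th:sturmfels-corr}, which gives $\sqrt{\initial_{\prec_\omega}(I_P)}=I_{\mc{T}}$ for the regular triangulation $\mc{T}$ induced by $\omega$ with $\partial\mc{T}=\Delta$. If $\mc{S}_\omega(P)$ is not already a triangulation, I first perturb $\omega$ generically in the interior coordinates, as allowed by the remark after \Cref{lem:extension-weight-fct}.

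\emph{Faces of $\Delta$ are not in the radical.} Let $\sigma\in\Delta$. Then $\sigma\subseteq F$ for some facet $F$, so $\bs{x}^\sigma\in R_F$. If a power $\bs{x}^{k\sigma}$ lay in $\initial_{\prec_\omega}(I_{\partial P})$, then, being a monomial of $R_F$, it would lie in $\initial_{\prec_\omega}(I_{\partial P})\cap R_F=\initial_{\prec_{\omega_F}}(I_F)$ by \Cref{lem:restriction-J-facets}. So $\bs{x}^\sigma\in\sqrt{\initial_{\prec_{\omega_F}}(I_F)}$. By \Cref{th:sturmfels-corr} applied to $\mathcal{A}_F$, this radical is the Stanley--Reisner ideal of the regular triangulation $\mc{S}_{\omega_F}(F)=\Delta_{|F}$; the equality uses \Cref{lem:subdivision-facets}. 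But $\sigma$ is a face of $\Delta_{|F}$, which is a contradiction. Hence $\bs{x}^\sigma\notin\sqrt{\initial_{\prec_\omega}(I_{\partial P})}$.

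\emph{Non-faces of $\Delta$ are in the radical.} Let $\sigma\notin\Delta$. If $\sigma\notin\mc{T}$, then $\bs{x}^\sigma\in I_{\mc{T}}=\sqrt{\initial_{\prec_\omega}(I_P)}$, and this ideal is contained in the radical we want. If instead $\sigma\in\mc{T}\setminus\partial\mc{T}$, then the relative interior of the simplex $\conv(\sigma)$ meets the interior of $P$, because the faces of $\mc{T}$ lying in $\partial P$ are exactly those of $\Delta$. Then $\gamma(\bs{1}_\sigma)=\sum_{\bs{\alpha}\in\sigma}(\bs{\alpha},1)$ is a positive combination of the points of $\sigma$, so it lies in the relative interior of $\cone(\conv\sigma)$ and hence in $\cone(P)^\circ$. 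By \Cref{prop:general-prop}(ii) this gives $\bs{x}^\sigma\in M_P$. The case $\sigma$ consisting of a single interior point is included here.

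The main obstacle is making the two geometric facts precise. First, a simplex of $\mc{T}$ not contained in $\partial P$ must have its relative interior in $P^\circ$. This holds because a convex set contained in $\partial P$ lies in a single facet, and $\mc{T}$ restricted to that facet is $\Delta_{|F}$ by \Cref{lem:subdivision-facets}. Second, the restriction step requires \Cref{th:sturmfels-corr} to apply to $\mathcal{A}_F$ inside the affine hyperplane of $F$, which amounts to a change of lattice coordinates. Once both points are settled, the two directions above combine to give the claimed equality of squarefree monomial ideals.
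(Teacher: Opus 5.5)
Your proof is correct. The inclusion $I_\Delta\subseteq\sqrt{\initial_{\prec_\omega}(I_{\partial P})}$ is exactly the paper's argument: $\bs{x}^\sigma\in I_{\mc{T}}=\sqrt{\initial_{\prec_\omega}(I_P)}$ when $\sigma\notin\mc{T}$, and $\bs{x}^\sigma\in M_P$ when $\sigma\in\mc{T}\setminus\partial\mc{T}$. You prove the reverse inclusion (faces of $\Delta$ avoid the radical) by a different route.

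\emph{The paper's route.} It stays in $R_P$ and rewrites the radical as $\sqrt{I_{\mc{T}}+M_P}$. A squarefree monomial in this ideal is divisible by a squarefree generator of $I_{\mc{T}}$ or of $\sqrt{M_P}$. Both cases are ruled out for faces of $\Delta$ using $\Delta=\partial\mc{T}\subseteq\mc{T}$ and an elementary convexity claim: for $\sigma\in\mc{T}$, $\bs{x}^\sigma\notin\sqrt{M_P}$ if and only if $\sigma$ lies in $\partial P$, which is checked with the facet functionals $\tilde{\ell}$.

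\emph{Your route.} You localize to a facet. \Cref{lem:restriction-J-facets} turns $\bs{x}^{k\sigma}\in\initial_{\prec_\omega}(I_{\partial P})$ into $\bs{x}^{k\sigma}\in\initial_{\prec_{\omega_F}}(I_F)$. Then \Cref{th:sturmfels-corr} applied to $\mathcal{A}_F$ identifies the radical of the latter with $I_{\Delta_{|F}}$.

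\emph{Comparison.} The paper needs Sturmfels' correspondence only once, for $P$, plus a short geometric check, and it handles both inclusions uniformly through $\sqrt{I_{\mc{T}}+M_P}$. Yours reduces this inclusion directly to the classical correspondence on each facet, reusing a lemma the paper only invokes later for \Cref{cor:sturmfels-bd-unimod}. The cost is that you depend on the more delicate Step~3 of that lemma and on applying \Cref{th:sturmfels-corr} to the lower-dimensional configurations $\mathcal{A}_F$.

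\emph{One small point.} You should not perturb the given $\omega$. Doing so can change $\prec_\omega$ and hence the ideal in the statement; the paper simply takes $\omega$ to induce a triangulation $\mc{T}$. If $\mc{S}_\omega(P)$ is not a triangulation, apply \Cref{th:sturmfels-corr} to the term order $\prec_\omega$ itself. This gives a regular triangulation refining $\mc{S}_\omega(P)$ whose boundary is still $\Delta$, and your argument then goes through verbatim.
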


\begin{proof}
    Let $\mathcal{T}$ be the triangulation of $P$ induced by $\omega$. 
    %We need to prove that
    %\begin{align*}
     %   \sqrt{J} = I_{\partial \mc{T}} = I_\Delta.
    % \end{align*}
    By (\ref{eq:decomposition-J}), taking the radical of $\initial_{\prec_\omega}(I_{\partial P})$ gives
    \[
        \sqrt{\initial_{\prec_\omega}(I_{\partial P})}=\sqrt{\initial_{\prec_\omega}(I_P)+M_P} =  \sqrt{\sqrt{\initial_{\prec_\omega}(I_P)}+M_P},
    \]
    where the second identity is a basic fact for radicals. Using \Cref{th:sturmfels-corr}, the radical of $\initial_{\prec_\omega}(I_P)$ is equal to the Stanley-Reisner ideal of $\mc{T}$ and it thus remains to prove that 
    \begin{align}\label{eq:to-prove}
        \sqrt{I_\mc{T}+M_P} = I_\Delta.
    \end{align}
    Let $\sigma \in \mc{T}$ be a face. First, we prove the following claim.

    \noindent\textit{Claim:}  $\sigma\subseteq \partial P$ if and only if  $\bs{x}^\sigma \notin \sqrt{M_P}$.
    %\begin{enumerate}
     %   \item[(i)] if $\text{conv}\{\bs{a}_i:i\in \sigma\}\subseteq \partial P$, then $\bs{x}^\sigma \notin \sqrt{M_P}$
      %  \item[(ii)] if $\text{conv}\{\bs{a}_i:i\in \sigma\}\nsubseteq \partial P$, then $\bs{x}^\sigma \in \sqrt{M_P}.$
   % \end{enumerate}
    
    Set 
    %In order to characterize the interpretation of $\bs{x}^\sigma \in \sqrt{M_P}$, let 
    \[
    \bs{u}_\sigma\coloneqq \sum_{\bs{\alpha} \in \sigma} (\bs{\alpha},1) = \left( \sum_{\bs{\alpha}\in \sigma}\bs{\alpha},|\sigma| \right).
    \]
    Observe that 
    \[
    (\bs{x}^\sigma)^k \in M_P \Longleftrightarrow k\bs{u}_\sigma \in S_P^\circ \Longleftrightarrow \bs{u}_{\sigma} \in \cone(P)^\circ.
    \]
    
    Suppose first that $\sigma \subseteq \partial P$. Then $\sigma$ is contained in some (not necessarily unique) facet $F$ of $P$. Let $\ell$ and $\Tilde{\ell}$ be chosen as in the proof of \Cref{lem:restriction-J-facets}. Then $\Tilde{\ell} \leq 0$ on $\cone(P)$ with equality precisely on $\cone(F)$. Since $\Tilde{\ell}(\bs{u}_\sigma) = 0$, we have $\bs{u}_\sigma\in\partial( \cone(P))$ , i.e., $\bs{u}_\sigma \notin \cone(P)^\circ$ and hence $\bs{x}^\sigma \notin \sqrt{M_P}$.
 
    Conversely, suppose that $\sigma \nsubseteq \partial P$. Then for every facet functional $\ell_j$ on $P$, $\ell_j(\bs{\alpha}) \leq 1$ for all $\bs{\alpha} \in \sigma$ and for each $j$ there exists $\bs{\alpha} \in \sigma$ such that $\ell_j(\bs{\alpha}) < 1$. Hence
    $$\ell_j\left( \sum_{\bs{\alpha}\in \sigma}\bs{\alpha} \right) < |\sigma|.$$ 
    %Recall that if $P = \{\bs{x}:\ell_j(\bs{x})\leq 1, \; j=1,\ldots,r\}$ is given by its facets inequalities, then 
    %\[
    %\cone(P) = \{(\bs{x},m):m \geq 0, \ell_j(\bs{x})\leq m \text{ for all } j \}
    %\]
    %and 
    %\[
    %\reli(\cone(P)) = \{(\bs{x},m):m \geq 0, \ell_j(\bs{x})< m \text{ for all } j \}.
    %\]
    Thus, by linearity of $\ell_j$, $\bs{u}_\sigma$ satisfies all strict facet inequalities of $\cone(P)^\circ$, implying $\bs{u}_\sigma \in S_P^\circ$. Therefore $\bs{x}^\sigma \in \sqrt{M_P}$. The claim follows.

    Now we can connect $ \sqrt{\initial_{\prec_\omega}(I_{\partial P})}$ to the Stanley-Reisner ideal $I_\Delta$ of $\Delta$. Since both $\sqrt{I_\mc{T}+M_P}$ and $I_\Delta$ are squarefree monomial ideals, it suffices to compare the squarefree monomials they contain.

    Let $\sigma \subseteq \mathcal{A}$. If $\bs{x}^\sigma \in \sqrt{I_\mc{T}+M_P}$, then $\bs{x}^\sigma$ is divisible by some squarefree monomial generator of $\sqrt{I_\mc{T}+M_P}$. Equivalently, some subset $\sigma^\prime \subseteq\sigma$ satisfies either $\bs{x}^{\sigma^\prime} \in I_\mc{T}$ or $\bs{x}^{\sigma^\prime} \in \sqrt{M_P}$. In the first case,  $\sigma$ is not a face of $\mc{T}$, in the second case,  one obtains $\sigma \nsubseteq \partial P$ by the above claim. In either case $\sigma$ is not a face of $\partial \mc{T}$, so $\bs{x}^\sigma \in I_{\partial \mc{T}} = I_\Delta$. 

    Conversely, if $\sigma$ is not a face of $\partial \mc{T}$, then either $\sigma$ is not a face of $\mc{T}$ (so $\bs{x}^\sigma \in I_\mc{T}$) or $\sigma$ is a face of $\mc{T}$ not contained in $\partial P$, in which case $\bs{x}^\sigma \in \sqrt{M_P}$. Hence, $\bs{x}^\sigma \in \sqrt{I_\mc{T}+M_P}$, which proves (\ref{eq:to-prove}).
\end{proof}

The final ingredient for the boundary analogue of \Cref{cor:betke-mcmullen-spec} is the following correspondence between regular unimodular triangulations of $\partial P$ and squarefree initial ideals $\initial_{\prec_\omega}(I_{\partial P})$. It is our second main contribution in this section.

\begin{corollary}\label{cor:sturmfels-bd-unimod}
    Let $\Delta$ be a regular triangulation of $\partial P$ induced by $\omega_\partial$, and let $\omega$ be a $\omega_\partial$-compatible extension as in \Cref{th:sturmfels-bd-corr}. Then $\Delta$ is unimodular if and only if $\initial_{\prec_\omega}(I_{\partial P})$ is squarefree.
\end{corollary}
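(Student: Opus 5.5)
The plan is to compare Hilbert series. By \eqref{eq:decomposition-J} and \Cref{th:sturmfels-bd-corr}, the monomial ideal $\initial_{\prec_\omega}(I_{\partial P})=\initial_{\prec_\omega}(I_P)+M_P$ has radical $I_\Delta$. It is therefore squarefree if and only if $\initial_{\prec_\omega}(I_{\partial P})=I_\Delta$. Since $\initial_{\prec_\omega}(I_{\partial P})\subseteq I_\Delta$ always holds, and both ideals are homogeneous, this equality is equivalent to the equality of the Hilbert series of $R_P/\initial_{\prec_\omega}(I_{\partial P})$ and $R_P/I_\Delta$. By \Cref{cor:hilb-series-hom-ideal} and \Cref{prop:general-prop}(iii), the first series equals $\Hilb_{R_P/I_{\partial P}}(z)$. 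The second equals $h_\Delta(z)/(1-z)^d$ by \eqref{eq:hilb-series-sr}; note that the variables indexed by interior points lie in $I_\Delta$ and do not contribute.

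For ``$\Rightarrow$'', assume $\Delta$ is unimodular. Then \Cref{lem:unimod-prop} and \eqref{eq:hilb-ehrhart-bdP} give $\Hilb_{R_P/I_{\partial P}}(z)=\Ehr_{\partial P}(z)$. It remains to show $\Hilb_{R_P/I_\Delta}(z)=\Ehr_{\partial P}(z)$, and I would do this by counting directly rather than invoking \Cref{th:stapledon}. A $\KK$-basis of $(R_P/I_\Delta)_m$ is given by the monomials of degree $m$ whose support is a face $\sigma\in\Delta$. Send such a monomial $\bs{x}^{\bs{c}}$ to $\gamma(\bs{c})$, a lattice point of $\partial(mP)\times\{m\}$. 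The cones $\cone(\sigma)$ for $\sigma\in\Delta$ have relatively open parts partitioning $\partial\cone(P)\setminus\{0\}$. Unimodularity of each simplex of $\Delta$ (in the lattice of its facet hyperplane) means every lattice point in $\cone(\sigma)$ is a unique nonnegative integer combination of the vectors $(\bs{\alpha},1)$ for $\bs{\alpha}\in\sigma$. Together these facts make the map a bijection onto $\partial(mP)\cap\ZZ^d$, which gives the desired equality of Hilbert series. Hence $\initial_{\prec_\omega}(I_{\partial P})=I_\Delta$ is squarefree.

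For ``$\Leftarrow$'', assume $\initial_{\prec_\omega}(I_{\partial P})$ is squarefree. Then it equals $I_\Delta$, so
\[
\Hilb_{R_P/I_{\partial P}}(z)=\frac{h_\Delta(z)}{(1-z)^d}.
\]
The main obstacle is that, without unimodularity, we do not know whether $\pi\circ\varphi_P$ is surjective, so $\Hilb_{R_P/I_{\partial P}}$ might be strictly smaller than $\Ehr_{\partial P}$. What we always have is the coefficientwise inequality $\Hilb_{R_P/I_{\partial P}}(z)\le \Ehr_{\partial P}(z)$, because $R_P/I_{\partial P}$ injects into $\KK[P]/J_{P^\circ}$. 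I would compare the growth rates, i.e. the leading coefficients of the Hilbert polynomials in degree $d-1$. On the left this coefficient is $h_\Delta(1)/(d-1)!$, and $h_\Delta(1)$ is the number of $(d-1)$-simplices of $\Delta$. On the right it is $\sum_F \mathrm{Vol}(F)/(d-1)!$, where $\mathrm{Vol}(F)$ is the normalized volume of the facet $F$ in its own lattice, and this sum equals $\sum_{\tau}\mathrm{Vol}(\tau)$ over the maximal simplices $\tau$ of $\Delta$. The inequality of series forces $\#\{\tau\}\le\sum_\tau \mathrm{Vol}(\tau)$, which does not by itself give equality.

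To close this gap I would instead work facet by facet. By \Cref{lem:restriction-J-facets}, the ideal $\initial_{\prec_{\omega_F}}(I_F)=\initial_{\prec_\omega}(I_{\partial P})\cap R_F$ is squarefree, since intersecting with a subring generated by variables preserves being squarefree. By \Cref{lem:subdivision-facets}, $\omega_F$ induces $\Delta_{|F}$. Now run the Hilbert series argument of \Cref{cor:betke-mcmullen-spec} in reverse for $F$. Squarefreeness gives $\Hilb_{R_F/I_F}(z)=h_{\Delta_{|F}}(z)/(1-z)^{d}$, whose leading growth counts simplices. Meanwhile $R_F/I_F$ is the semigroup ring of $\{(\bs{\alpha},1):\bs{\alpha}\in\mc{A}_F\}$. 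The key extra input is that squarefreeness makes this semigroup ring normal, because a squarefree initial ideal implies normality (Sturmfels). A normal semigroup containing all degree-one lattice points of $\cone(F)$ generates the full lattice of $\cone(F)$, so it equals $\cone(F)\cap\ZZ^{d+1}$. Hence its Hilbert series is $\Ehr_F(z)$, whose leading growth is $\mathrm{Vol}(F)$. Equating the two growth rates gives $\#\{\tau\subseteq F\}=\sum_{\tau\subseteq F}\mathrm{Vol}(\tau)$. Since each $\mathrm{Vol}(\tau)\ge1$, every simplex of $\Delta_{|F}$ is unimodular, and therefore $\Delta$ is unimodular.
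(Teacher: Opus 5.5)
Your ``$\Rightarrow$'' direction is correct but argued differently from the paper. You show by a lattice-point bijection that $\KK[\Delta]$ and $\KK[P]/J_{P^\circ}$ have the same Hilbert function, and then the inclusion $\initial_{\prec_\omega}(I_{\partial P})\subseteq I_\Delta$ forces equality. So you derive squarefreeness from the identity $\Ehr_{\partial P}(z)=h_\Delta(z)/(1-z)^d$. The paper goes the other way. It checks minimal generators one by one: those supported in a facet are handled by \Cref{lem:restriction-J-facets} together with Sturmfels' criterion for $I_F$, and the others are the squarefree generators $\bs{x}^\sigma$ of $M_P$. Only afterwards does it deduce the Hilbert series identity in \Cref{cor:stapledon-spec}.

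The ``$\Leftarrow$'' direction has a genuine gap at the sentence ``a normal semigroup containing all degree-one lattice points of $\cone(F)$ generates the full lattice of $\cone(F)$''. Normality of $\NN\{(\bs{\alpha},1):\bs{\alpha}\in\mc{A}_F\}$ only means it is saturated in the lattice it generates. That lattice can be a proper sublattice of $\ZZ^{d+1}\cap\operatorname{span}(\cone(F))$. Take the Reeve tetrahedron $T_r=\conv\{\bs{0},\bs{e}_1,\bs{e}_2,(1,1,r)\}$ with $r\geq 2$. Its only lattice points are its vertices, and the semigroup generated by the four lifted vertices is free, hence normal. But it has index $r$ in $\ZZ^4$, and its ring has Hilbert series $1/(1-z)^4$, not $\Ehr_{T_r}(z)=(1+(r-1)z^2)/(1-z)^4$.

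This cannot be patched, because the implication is false as stated. Let
\[
P=\conv\{\bs{0},\bs{e}_1,\bs{e}_2,\bs{e}_1+\bs{e}_2+r\bs{e}_3,\bs{e}_4\}\subset\RR^4,
\]
the lattice pyramid over $T_r$. Its only lattice points are its five vertices, so $I_P=0$ and $M_P=\langle x_1x_2x_3x_4x_5\rangle$. Hence $\initial_{\prec_\omega}(I_{\partial P})=M_P$ is squarefree for every $\omega$. Yet $\Delta$, the boundary complex of the simplex $P$, contains the facet $T_r\times\{0\}$ of normalized volume $r$. Consistently, $h^\ast_{\partial P}(z)=1+z+rz^2+z^3+z^4\neq h_\Delta(z)=1+z+z^2+z^3+z^4$.

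The paper's proof of ``$\Leftarrow$'' has the same hole, hidden in its appeal to \Cref{cor:sturmfels-unimod}. Sturmfels' criterion measures normalized volume in the lattice affinely generated by $\mc{A}_F$, not in $\operatorname{aff}(F)\cap\ZZ^d$. The implication becomes true if you add the hypothesis that $\mc{A}_F$ affinely generates $\operatorname{aff}(F)\cap\ZZ^d$ for every facet $F$, or if unimodularity is measured in those generated lattices. Under that hypothesis your growth-rate comparison works as written, though applying Sturmfels' criterion directly to each $I_F$ is then shorter.
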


\begin{proof}
   $\Leftarrow$: Let $F$ be any facet of $P$. By \Cref{lem:restriction-J-facets},
   \[
   \initial_{\prec_\omega}(I_{\partial P})\cap R_F = \initial_{\prec_{\omega_F}}(I_F)
   \]
   and since $\initial_{\prec_\omega}(I_{\partial P})$ is squarefree, so is  $\initial_{\prec_{\omega_F}}(I_F)$. Now $\omega_F$ induces the regular triangulation $\Delta_{|F}$ of the lattice polytope $F$ and \Cref{cor:sturmfels-unimod} implies that $\Delta_{|F}$ is unimodular. Since this holds for every facet $F$ of $P$, every maximal simplex of $\Delta$ is unimodular. Thus $\Delta$ is unimodular.

   $\Rightarrow$: We show that every minimal monomial generator of $  \initial_{\prec_\omega}(I_{\partial P})$ is squarefree. Let $\bs{x}^{\bs{c}}=\prod_{\alpha\in \mathcal{A}}x_{\bs{\alpha}}^{c_{\bs{\alpha}}}$ be such a generator, and let $\sigma \coloneqq \{\bs{\alpha}:c_{\bs{\alpha}}\neq 0\} \subseteq \mathcal{A}$. We distinguish two cases:

   \noindent\textit{Case 1:} $\sigma$ is contained in a facet $F$ of $P$. Then $\bs{x}^{\bs{c}} \in \initial_{\prec_\omega}(I_{\partial P})\cap R_F$ and, by \Cref{lem:restriction-J-facets}, $\bs{x}^{\bs{c}}\in \initial_{\prec_{\omega_F}}(I_F)$. Since $\Delta_{|F}$ is unimodular, the classical \Cref{cor:sturmfels-unimod} implies that $\initial_{\prec_{\omega_F}}(I_F)$ is squarefree. Therefore $\bs{x}^{\bs{c}} $ is squarefree.

   \noindent\textit{Case 2:} $\sigma$ is not contained in any facet of $P$. Let $\bs{x}^\sigma = \prod_{\bs{\alpha}\in \sigma} x_{\bs{\alpha}}$. Since $\sigma$ is not contained in any facet, the argument from \Cref{lem:subdivision-facets} shows that 
   $$\bs{u}_\sigma = \sum_{\bs{\alpha} \in \sigma} (\bs{\alpha},1) \in S_P^\circ.$$
   Hence, $\bs{x}^\sigma \in M_P \subseteq \initial_{\prec_\omega}(I_{\partial P})$. Since $\bs{x}^\sigma$ divides $\bs{x}^{\bs{c}}$ by construction and since the latter is a minimal monomial generator of $\initial_{\prec_\omega}(I_{\partial P})$, we conclude $\bs{x}^{\bs{c}}= \bs{x}^\sigma$, which is squarefree.

   It follows that $\initial_{\prec_\omega}(I_{\partial P})$ is a squarefree monomial ideal.
\end{proof}

\noindent The proof of the special case of \Cref{th:stapledon} now follows like a breeze:

\begin{corollary}\label{cor:stapledon-spec}
    Let $P \subset \RR^d$ be a lattice $d$-polytope whose boundary complex $\partial P$ admits a regular unimodular triangulation $\Delta$. Then $h^\ast_{\partial P}(z) = h_\Delta(z)$.
\end{corollary}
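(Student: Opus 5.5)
The plan is to mirror the proof of \Cref{cor:betke-mcmullen-spec}, replacing the toric ideal $I_P$ by the toric boundary ideal $I_{\partial P}=\varphi_P^{-1}(J_{P^\circ})$. Let $\Delta$ be induced by $\omega_\partial:\mc{B}\to\RR$. By \Cref{lem:extension-weight-fct} I fix an $\omega_\partial$-compatible extension $\omega:\mc{A}\to\RR$. If the induced subdivision is not yet a triangulation, I perturb generically on the interior points; this keeps $\partial\mc{S}_\omega(P)=\Delta$ and makes $\prec_\omega$ a genuine term order on $R_P$.

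The chain of Hilbert series identities is as follows. First, \eqref{eq:hilb-ehrhart-bdP} gives $\Hilb_{\KK[P]/J_{P^\circ}}(z)=\Ehr_{\partial P}(z)$. Second, since $\Delta$ is unimodular, \Cref{lem:unimod-prop} yields the graded isomorphism $R_P/I_{\partial P}\cong \KK[P]/J_{P^\circ}$, which is \eqref{eq:hilb-series-hom-ideal-P}. Third, $I_{\partial P}$ is homogeneous by \Cref{prop:general-prop}(iii), so \Cref{cor:hilb-series-hom-ideal} gives
\[
\Hilb_{R_P/I_{\partial P}}(z)=\Hilb_{R_P/\initial_{\prec_\omega}(I_{\partial P})}(z).
\]
Fourth, \Cref{cor:sturmfels-bd-unimod} says that $\initial_{\prec_\omega}(I_{\partial P})$ is squarefree, hence equal to its radical. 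By \Cref{th:sturmfels-bd-corr} this radical is $I_\Delta$. Therefore the last Hilbert series is $\Hilb_{\KK[\Delta]}(z)$, where we regard $\Delta$ as a complex on the vertex set $\mc{A}$.

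To conclude, I apply \eqref{eq:hilb-series-sr}. The points of $\mc{A}\setminus\mc{B}$ are not vertices of $\Delta$, so their variables lie in $I_\Delta$ and do not affect the quotient ring; hence $\KK[x_{\bs\alpha}:\bs\alpha\in\mc{A}]/I_\Delta\cong\KK[\Delta]$. Since $\Delta$ has dimension $d-1$, this gives $\Hilb_{\KK[\Delta]}(z)=h_\Delta(z)/(1-z)^{d}$. Comparing with $\Ehr_{\partial P}(z)=h^\ast_{\partial P}(z)/(1-z)^d$ gives $h^\ast_{\partial P}(z)=h_\Delta(z)$.

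The main point needing care is the degree-zero term. The definition of $\Ehr_{\partial P}$ uses the constant $1$, while $L_{\partial P}(0)$ would literally count $\partial(0P)\cap\ZZ^d$. On the algebraic side, $(\KK[P]/J_{P^\circ})_0=\KK$ because the origin is not in $\cone(P)^\circ$, so both sides have constant term $1$ and the identification is consistent. Everything else is bookkeeping with the earlier results.
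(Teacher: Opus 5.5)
Your proposal is correct and is essentially the paper's proof. It uses the same chain $\Ehr_{\partial P}(z)=\Hilb_{\KK[P]/J_{P^\circ}}(z)=\Hilb_{R_P/I_{\partial P}}(z)=\Hilb_{R_P/\initial_{\prec_\omega}(I_{\partial P})}(z)=\Hilb_{\KK[\Delta]}(z)$, justified by \Cref{lem:unimod-prop}, \Cref{prop:general-prop}(iii), \Cref{cor:hilb-series-hom-ideal}, \Cref{th:sturmfels-bd-corr} and \Cref{cor:sturmfels-bd-unimod}, and your remarks on the interior variables lying in $I_\Delta$ and on the constant term are correct details the paper leaves implicit. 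The one inaccuracy is the aside about perturbing only the interior weights, which need not produce a triangulation or a term order (e.g.\ when $P$ has no interior lattice points); the step is unnecessary anyway, since the compatible extension from \Cref{lem:extension-weight-fct} is already taken to induce a regular triangulation, and a term order is obtained by refining $\omega$ with any tie-breaking order.
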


\begin{proof}
    Let $\omega_\partial\in \RR^{|\partial P\cap \ZZ^d|}$ be a weight vector that induces the triangulation $\Delta$. 
    Since we proved that $I_{\partial P}$ is homogeneous (see \Cref{prop:general-prop} (iii)), we can apply \Cref{cor:hilb-series-hom-ideal} to obtain
    \begin{align}\label{eq:hilb-series-initial}
        \Hilb_{R_P/I_{\partial P}}(z) = \Hilb_{R_P/\initial_{\prec_\omega}(I_{\partial P})}(z),
    \end{align}
    where $\omega$ is a $\omega_\partial$-compatible extension. 
    \noindent In a final step, applying \Cref{th:sturmfels-bd-corr} and \Cref{cor:sturmfels-bd-unimod} leads to
    \[
    \Hilb_{R_P/\initial_{\prec_\omega}(I_{\partial P})}(z) = \Hilb_{\KK[\Delta]}(z) = \frac{h_\Delta(z)}{(1-z)^d}.
    \]
    Combining this with (\ref{eq:hilb-ehrhart-bdP}), (\ref{eq:hilb-series-hom-ideal-P}) and (\ref{eq:hilb-series-initial}) proves the statement.
\end{proof}

\begin{remark}
    Every  boundary-supported monomial in $M_P$ is divisible by a quadratic monomial $x_{\bs{\alpha}} x_{\bs{\beta}} \in M_P$, where $\bs{\alpha},\bs{\beta}$ do not lie in a common facet of $P$. Hence, the ideal $M_P$ is generated (up to saturation, radical and combinatorial control) by degree-$2$ generators of this form  as well as variables coming from interior lattice points.
\end{remark}

\begin{example}
    Consider $P = \conv\{(0,0),(2,0),(2,1),(1,2),(0,2)\}$. The contained lattice points are $a_1 = (0,0),\;a_2 = (1,0),\;a_3 = (2,0),\;a_4 = (2,1),\;a_5 = (1,2),\;a_6 = (0,2),\;a_7 = (0,1),\;a_8 = (1,1)$, with $a_8$ being the only interior lattice point. The toric component $I_P$ is generated by
    \begin{align*}
        I_P = \langle &x_1 x_3 - x_2^2, x_1 x_6 - x_7^2, \\ &x_1x_8-x_2x_7, x_2x_4-x_3x_8,x_5x_7-x_6x_8,x_2x_5-x_8^2,x_3x_6-x_8^2,x_4x_7-x_8^2, \\ &x_1x_4 -x_3x_7, x_2x_8-x_3x_7, x_3x_5-x_4x_8, x_4x_6-x_5x_8\rangle.
    \end{align*}
    Some of these relations are illustrated in \Cref{fig:Example-ToricBoundaryIdeal}. One can easily identify the quadratic generators of $M_P$ being
    \begin{align*}
        M_P = \langle x_8,x_1x_4, x_1x_5, x_2x_4,x_2x_5,x_2x_6,x_2x_7,x_3x_5,x_3x_6,x_3x_7,x_4x_6,x_4x_7,x_5x_7 \rangle.
    \end{align*}

    \begin{figure}[h]
        \centering
        
        % Row 1
        \begin{subfigure}{0.45\textwidth}
            \centering
            \begin{tikzpicture}[scale=1.8]
              % polygon fill
              \fill[gray!25] 
                (0,0) -- (2,0) -- (2,1) -- (1,2) -- (0,2) -- cycle;
              % polygon boundary
              \draw[thin,gray] 
                (0,0) -- (2,0) -- (2,1) -- (1,2) -- (0,2) -- cycle;
              % boundary lattice points
              \fill[black] (0,0) circle (1pt) node[left, font=\tiny] {$(0,0)$};
              \fill[black] (1,0) circle (1pt);
              \fill[black] (2,0) circle (1pt);
              \fill[gray] (2,1) circle (1pt);
              \fill[gray] (1,2) circle (1pt);
              \fill[gray] (0,2) circle (1pt);
              \fill[gray] (0,1) circle (1pt);
              % interior lattice point
              \fill[gray] (1,1) circle (0.9pt);

              % relations
              \draw[thin,brickred]
                (0,0) -- (1,0);
              \draw[thin,brickred]  
                (1,0) -- (2,0);
            \end{tikzpicture}
            \caption{$x_1 x_3 - x_2^2 \in I_P$.}
        \end{subfigure}
        \hfill
        \begin{subfigure}{0.45\textwidth}
            \centering
            \begin{tikzpicture}[scale=1.8]
              % polygon fill
              \fill[gray!25] 
                (0,0) -- (2,0) -- (2,1) -- (1,2) -- (0,2) -- cycle;
              % polygon boundary
              \draw[thin,gray] 
                (0,0) -- (2,0) -- (2,1) -- (1,2) -- (0,2) -- cycle;
              % boundary lattice points
              \fill[black] (0,0) circle (1pt) node[left, font=\tiny] {$(0,0)$};
              \fill[gray] (1,0) circle (1pt);
              \fill[gray] (2,0) circle (1pt);
              \fill[gray] (2,1) circle (1pt);
              \fill[gray] (1,2) circle (1pt);
              \fill[black] (0,2) circle (1pt);
              \fill[black] (0,1) circle (1pt);
              % interior lattice point
              \fill[gray] (1,1) circle (0.9pt);

              % relations
              \draw[thin,brickred]
                (0,0) -- (0,1);
              \draw[thin,brickred]  
                (0,1) -- (0,2);
            \end{tikzpicture}
            \caption{$x_1 x_6 - x_7^2 \in I_P$}
        \end{subfigure}
        
        \vspace{0.5cm}
        
        % Row 2
        \begin{subfigure}{0.45\textwidth}
            \centering
            \begin{tikzpicture}[scale=1.8]
              % polygon fill
              \fill[gray!25] 
                (0,0) -- (2,0) -- (2,1) -- (1,2) -- (0,2) -- cycle;
              % polygon boundary
              \draw[thin,gray] 
                (0,0) -- (2,0) -- (2,1) -- (1,2) -- (0,2) -- cycle;
              % boundary lattice points
              \fill[black] (0,0) circle (1pt) node[left, font=\tiny] {$(0,0)$};
              \fill[black] (1,0) circle (1pt);
              \fill[gray] (2,0) circle (1pt);
              \fill[gray] (2,1) circle (1pt);
              \fill[gray] (1,2) circle (1pt);
              \fill[gray] (0,2) circle (1pt);
              \fill[black] (0,1) circle (1pt);
              % interior lattice point
              \fill[black] (1,1) circle (0.9pt);

              % relations
              \draw[thin,brickred]
                (0,0) -- (1,1);
              \draw[thin,brickred]  
                (0,1) -- (1,0);
              \draw[thin,black]
                (0,0) -- (1,0) -- (1,1) -- (0,1) -- cycle;
            \end{tikzpicture}
            \caption{$x_1 x_8 - x_2 x_7 \in I_P$.}
        \end{subfigure}
        \hfill
        \begin{subfigure}{0.45\textwidth}
            \centering
            \begin{tikzpicture}[scale=1.8]
              % polygon fill
              \fill[gray!25] 
                (0,0) -- (2,0) -- (2,1) -- (1,2) -- (0,2) -- cycle;
              % polygon boundary
              \draw[thin,gray] 
                (0,0) -- (2,0) -- (2,1) -- (1,2) -- (0,2) -- cycle;
              % boundary lattice points
              \fill[gray] (0,0) circle (1pt) node[left, font=\tiny, black] {$(0,0)$};
              \fill[black] (1,0) circle (1pt);
              \fill[black] (2,0) circle (1pt);
              \fill[black] (2,1) circle (1pt);
              \fill[gray] (1,2) circle (1pt);
              \fill[gray] (0,2) circle (1pt);
              \fill[gray] (0,1) circle (1pt);
              % interior lattice point
              \fill[black] (1,1) circle (0.9pt);

              % relations
              \draw[thin,brickred]
                (1,0) -- (2,1);
              \draw[thin,brickred]  
                (1,1) -- (2,0);
              \draw[thin,black]
                (1,0) -- (2,0) -- (2,1) -- (1,1) -- cycle;
            \end{tikzpicture}
            \caption{$x_2 x_4 - x_3 x_8 \in I_P$}
        \end{subfigure}
        
        \vspace{0.5cm}
        
        % Row 3
        \begin{subfigure}{0.45\textwidth}
            \centering
            \begin{tikzpicture}[scale=1.8]
              % polygon fill
              \fill[gray!25] 
                (0,0) -- (2,0) -- (2,1) -- (1,2) -- (0,2) -- cycle;
              % polygon boundary
              \draw[thin,gray] 
                (0,0) -- (2,0) -- (2,1) -- (1,2) -- (0,2) -- cycle;
              % boundary lattice points
              \fill[black] (0,0) circle (1pt) node[left, font=\tiny] {$(0,0)$};
              \fill[gray] (1,0) circle (1pt);
              \fill[black] (2,0) circle (1pt);
              \fill[black] (2,1) circle (1pt);
              \fill[gray] (1,2) circle (1pt);
              \fill[gray] (0,2) circle (1pt);
              \fill[black] (0,1) circle (1pt);
              % interior lattice point
              \fill[gray] (1,1) circle (0.9pt);

              % relations
              \draw[thin,brickred]
                (0,0) -- (2,1);
              \draw[thin,brickred]  
                (0,1) -- (2,0);
              \draw[thin,black]
                (0,0) -- (2,0) -- (2,1) -- (0,1) -- cycle;
            \end{tikzpicture}
            \caption{$x_1 x_4 - x_3 x_7 \in I_P$}
        \end{subfigure}
        \hfill
        \begin{subfigure}{0.45\textwidth}
            \centering
            \begin{tikzpicture}[scale=1.8]
              % polygon fill
              \fill[gray!25] 
                (0,0) -- (2,0) -- (2,1) -- (1,2) -- (0,2) -- cycle;
              % polygon boundary
              \draw[thin,gray] 
                (0,0) -- (2,0) -- (2,1) -- (1,2) -- (0,2) -- cycle;
              % boundary lattice points
              \fill[gray] (0,0) circle (1pt) node[left, font=\tiny, black] {$(0,0)$};
              \fill[gray] (1,0) circle (1pt);
              \fill[gray] (2,0) circle (1pt);
              \fill[black] (2,1) circle (1pt);
              \fill[black] (1,2) circle (1pt);
              \fill[black] (0,2) circle (1pt);
              \fill[gray] (0,1) circle (1pt);
              % interior lattice point
              \fill[black] (1,1) circle (0.9pt);

              % relations
              \draw[thin,brickred]
                (0,2) -- (2,1);
              \draw[thin,brickred]  
                (1,1) -- (1,2);
              \draw[thin,black]
                (1,1) -- (2,1) -- (1,2) -- (0,2) -- cycle;
            \end{tikzpicture}
            \caption{$x_4 x_6 - x_5 x_8 \in I_P$}
        \end{subfigure}
        
        \caption{Illustration of binomial relations eliminated by component $I_P$ of toric boundary ideal $I_{\partial P}$ of $P = \conv\{(0,0),(2,0),(2,1),(1,2),(0,2)\}$.}
        \label{fig:Example-ToricBoundaryIdeal}
    \end{figure}
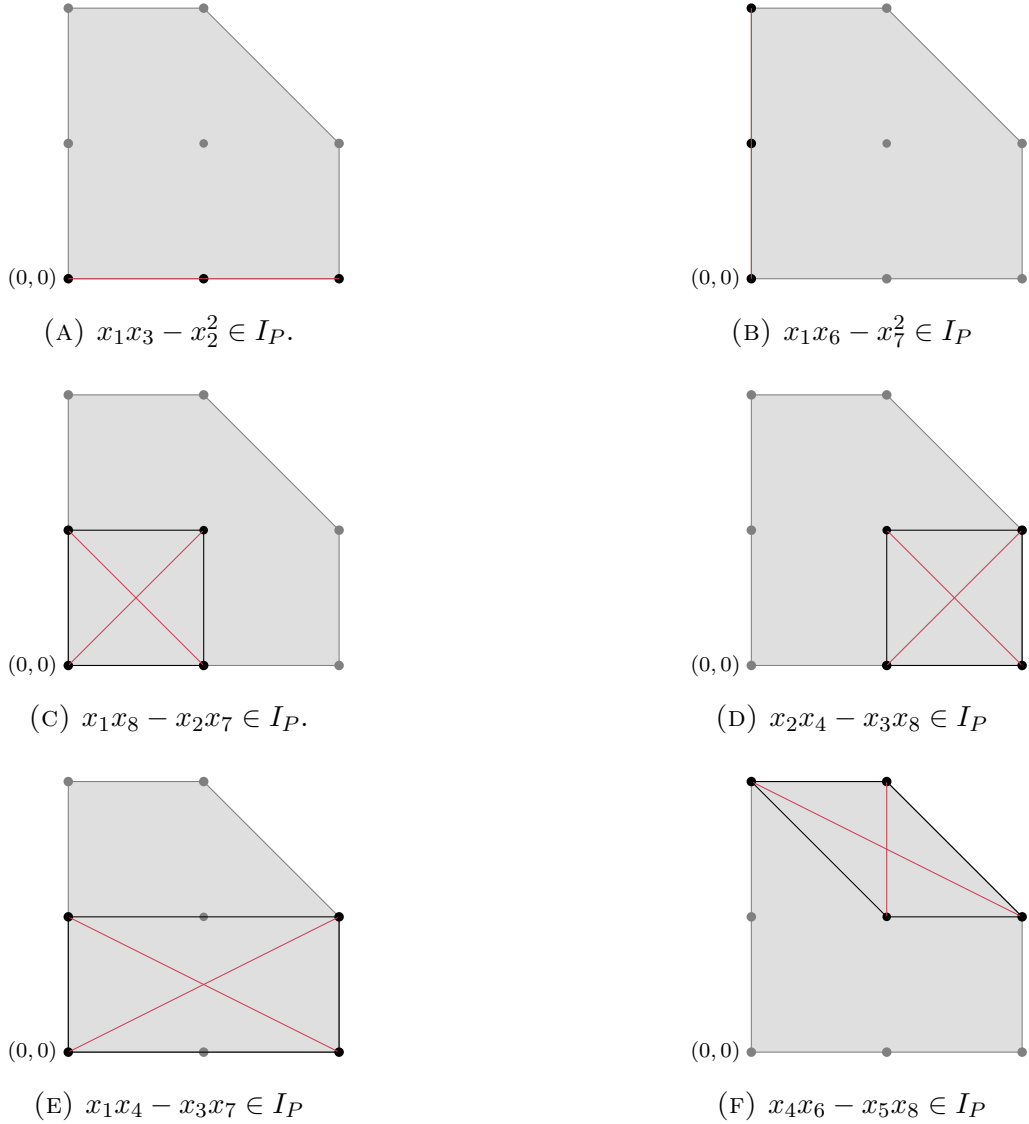
\end{example}

%%%--------------------------------------------------------------------------------%%%
%%%                           S E C T I O N     4                                  %%%
%%%--------------------------------------------------------------------------------%%%
\section{Results on $h^\ast(P)$} \label{sect:Applic} In this section, we contribute to the study of characterizations of the $h^\ast$-vector of lattice polytopes. First, we formulate Dehn-Sommerville relations between $h^\ast(P)$ and $h^\ast(\partial P)$ that hold without any additional assumptions for any lattice polytope $P$. Building on these relations, we establish a connection between face enumeration of simplicial manifolds and regularly unimodularly triangulated lattice polytopes. We first show an analogue of Klee's Dehn-Sommerville relations for simplicial balls. Using this and the results from the previous sections, we show that (under suitable assumptions) the vector $(h_0^\ast(P),h_1^\ast(P)-h_{d}^\ast(P),h_2^\ast(P)-h_{d-1}^\ast(P),\ldots,h_{\lfloor d/2\rfloor}^\ast(P)-h_{d+1-\lfloor d/2\rfloor}^\ast(P))$ is an $M$-sequence. We then characterize symmetry properties of $h^\ast(P)$ using the property that a polytopal realization of  a regular unimodular triangulation of $\partial P$ is stacked. Finally, we formulate upper and lower bounds for these differences under certain assumptions regarding the boundary.

%%%------------------------------ Subsection ----------------------------------------%
\subsection{Dehn-Sommerville relations for Ehrhart $h^\ast$-vectors} 
The goal of this section is to show a version of the classical Dehn-Sommerville relations for balls and spheres for $h^\ast$-vectors of lattice polytopes and their boundaries. 
The proof will go along similar lines as the proof of \cite[Theorem 3.1]{NovikSwartz09}, which might go back to Macdonald \cite{Macdonald1971}:

\begin{theorem}\label{th:ds-lattice-polytopes}
    Let $P \subset \RR^d$ be a lattice $d$-polytope. Then
    $$h_i^\ast(P) - h_{d-i+1}^\ast(P) = h_i^\ast(\partial P) - h_{i-1}^\ast(\partial P)$$
    for all $0\leq i \leq d$. 
\end{theorem}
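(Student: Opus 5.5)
The plan is to argue purely at the level of generating functions, using Ehrhart--Macdonald reciprocity in the form $\Ehr_{P^\circ}(z) = (-1)^{d+1}\Ehr_P(1/z)$. Together with the definitions in \Cref{sect:Prelim}, this gives the classical identity $h^\ast_{P^\circ}(z) = z^{d+1}h^\ast_P(1/z)$; that is, the coefficients of $h^\ast_{P^\circ}$ are those of $h^\ast_P$ in reversed order, shifted by one: $h^\ast_{P^\circ}(z)=\sum_{i=0}^{d} h^\ast_{d-i+1}(P) z^{i}$ with the convention $h^\ast_{d+1}(P)=0$. Reciprocity holds for every lattice polytope, which is why no triangulation hypothesis appears in the statement.

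Next, I use the decomposition $L_P(m) = L_{\partial P}(m) + L_{P^\circ}(m)$ for $m\ge 1$. For $m=0$ the constant terms also match, since $\Ehr_P$ and $\Ehr_{\partial P}$ both start with $1$ and $\Ehr_{P^\circ}$ starts with $0$. Hence
\[
\Ehr_P(z) = \Ehr_{\partial P}(z) + \Ehr_{P^\circ}(z).
\]
Multiplying by $(1-z)^{d+1}$ and using the definitions of the three $h^\ast$-polynomials yields
\[
h^\ast_P(z) - h^\ast_{P^\circ}(z) = (1-z)\,h^\ast_{\partial P}(z).
\]

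Finally, I substitute the reciprocity expression for $h^\ast_{P^\circ}$ and compare coefficients of $z^i$. On the left, the coefficient of $z^i$ is $h^\ast_i(P) - h^\ast_{d-i+1}(P)$. On the right, it is $h^\ast_i(\partial P) - h^\ast_{i-1}(\partial P)$, with the conventions $h^\ast_{-1}(\partial P)=0$ and $h^\ast_d(\partial P)=0$ (recall $\deg h^\ast_{\partial P}\le d-1$). This is exactly the claimed identity for $0\le i\le d$; the cases $i=0$ and $i=d$ are consistent since $h^\ast_{d+1}(P)=0$.

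The only step that needs care is the normalization of the reciprocity identity: checking that the convention $\Ehr_{P^\circ}$ without constant term, divided by $(1-z)^{d+1}$, produces precisely the shift $z^{d+1}h^\ast_P(1/z)$ and not $z^{d}$ or $z^{d+2}$. I would verify this on a unimodular simplex, where $h^\ast_P=1$ and $\Ehr_{P^\circ}(z)=z^{d+1}/(1-z)^{d+1}$. The rest is bookkeeping of the degree conventions at the ends of the index range.
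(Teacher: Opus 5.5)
Your argument is correct and is essentially the paper's proof: reciprocity gives $h^\ast_{P^\circ}(z)=z^{d+1}h^\ast_P(1/z)$, and comparing coefficients in $h^\ast_P(z)-h^\ast_{P^\circ}(z)=(1-z)h^\ast_{\partial P}(z)$ yields the identity. One correction: drop the convention $h^\ast_d(\partial P)=0$. The $z^{d+1}$-coefficient of that same identity gives $h^\ast_d(\partial P)=h^\ast_0(P)=1$; for example, $h^\ast_{\partial P}(z)=1+2z+z^2$ for the unit square. So the degree bound $d-1$ you recalled is false, and imposing $h^\ast_d(\partial P)=0$ would make the case $i=d$ fail. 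Your coefficient comparison never actually needs that convention.
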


\begin{proof}
    By Ehrhart-Macdonald reciprocity (see \cite[Theorem 4.4]{Beck2010})
    \begin{align}\label{eq:ehrhart-macdonald}
        \Ehr_P\left(\frac{1}{z}\right) = (-1)^{d+1} \Ehr_{P^\circ}(z).
    \end{align}
    The left-hand side of (\ref{eq:ehrhart-macdonald}) is equal to 
    \[
    \Ehr_P\left(\frac{1}{z}\right) = \frac{\sum_{i=0}^d h_i^\ast(P) \left(\frac{1}{z}\right)^i }{(1-z)^{d+1}} = \frac{\sum_{i=0}^d h_i^\ast(P) \left(z\right)^{d-i+1} }{(z-1)^{d+1}}.
    \]
    Using $\Ehr_{P^\circ}(z) = \frac{\sum_{i=0}^d h_i^\ast(\partial P) z^i}{(1-z)^{d+1}}$ and multiplying both sides of (\ref{eq:ehrhart-macdonald}) by $(-1)^{d+1}$, we obtain that
    \begin{align}\label{eq:identity-interior-P}
        \sum_{i=0}^d \left( h_{d-i+1}^\ast(P) - h_i^\ast(P^\circ) \right) z^i = 0 
    \end{align}
    implies that  $h_{d-i+1}^\ast(P) = h_i^\ast(P^\circ)$ for all $0 \leq i \leq d$. 

    Moreover, the Ehrhart series of $P$ can be written as
    \[
    \Ehr_P(z) = \Ehr_{P^\circ}(z) + \Ehr_{\partial P}(z).
    \]
    On the level of Ehrhart polynomials, this translates to
    \[
    h_P^\ast(z) = h_{P^\circ}^\ast(z) + (1-z)h_{\partial P}^\ast(z),
    \]
    leading to coefficient-wise relations
    \begin{align*}
        h_i^\ast(P) &= h_i^\ast(P^\circ) + h_i^\ast(\partial P) - h_{i-1}^\ast(\partial P) \\ &\overset{(\ref{eq:identity-interior-P})}{=} h_{d-i+1}^\ast(P) + h_i^\ast(\partial P) - h_{i-1}^\ast(\partial P),
    \end{align*}
    which shows the claim.
\end{proof}

An even shorter proof can be given in the special case when $P$ admits a regular unimodular triangulation.

\begin{corollary}\label{cor:ds-lattice-polytopes-reguni}
    Let $P \subset \RR^d$ be a lattice $d$-polytope admitting a regular unimodular triangulation. Then
    \[
    h_i^\ast(P) - h_{d-i+1}^\ast(P) = h_i^\ast(\partial P) - h_{i-1}^\ast(\partial P)
    \]
    for all $0\leq i \leq d$. 
\end{corollary}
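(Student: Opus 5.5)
The plan is to turn the statement into the classical Dehn--Sommerville relations for simplicial balls (Klee / Gr\"abe), via \Cref{cor:betke-mcmullen-spec} and \Cref{cor:stapledon-spec}. Let $\mc{T}$ be a regular unimodular triangulation of $P$. Then \Cref{cor:betke-mcmullen-spec} gives $h^\ast_P(z)=h_\mc{T}(z)$.

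Next we pass to the boundary. The restriction $\partial\mc{T}$ is a triangulation of $\partial P$. It is unimodular, since every $(d-1)$-simplex of $\partial\mc{T}$ is a facet of a unimodular $d$-simplex of $\mc{T}$ lying in a facet of $P$, hence has normalized volume $1$ in that facet's lattice. It is also regular: restricting the lifting $\omega$ to a facet $F$ gives exactly $\mc{T}_{|F}$ by \Cref{lem:subdivision-facets}, so the weight function $\omega_{|\mc{B}}$ induces $\partial\mc{T}$. Therefore \Cref{cor:stapledon-spec} applies and yields $h^\ast_{\partial P}(z)=h_{\partial\mc{T}}(z)$.

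The statement is now purely combinatorial. For the simplicial $d$-ball $\mc{T}$ (a triangulated $d$-polytope, hence a $d$-ball) with boundary sphere $\partial\mc{T}$, we need
\[
h_i(\mc{T})-h_{d+1-i}(\mc{T})=h_i(\partial\mc{T})-h_{i-1}(\partial\mc{T}), \qquad 0\le i\le d .
\]
This is exactly the Dehn--Sommerville relation for homology balls (Klee, Gr\"abe; see also \cite{NovikSwartz09}). In polynomial form it reads $z^{d+1}h_\mc{T}(1/z)-h_\mc{T}(z)=(z-1)\,h_{\partial\mc{T}}(z)$.

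If we want to avoid citing it, the main step is to prove this relation directly. We would use Stanley--Reisner theory with the interior ideal: the Hilbert series of the ideal of $\KK[\mc{T}]$ spanned by monomials whose support is an interior face equals $\Hilb_{\KK[\mc{T}]}(z)-\Hilb_{\KK[\partial\mc{T}]}(z)$. Its numerator is $h_\mc{T}(z)-(1-z)h_{\partial\mc{T}}(z)$. Since $\mc{T}$ is a ball, each interior face $F$ has a link that is a sphere of dimension $d-|F|$. Counting faces through the Euler characteristic of these links shows that the numerator equals $z^{d+1}h_\mc{T}(1/z)$; this is the simplicial reciprocity that mirrors Ehrhart--Macdonald reciprocity. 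Comparing coefficients of $z^i$ gives the relation.

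Alternatively, the reciprocity can be read off lattice-theoretically, since by unimodularity the interior faces of $\mc{T}$ count the interior lattice points of the dilates. This reduces the argument back to \Cref{th:ds-lattice-polytopes}, so the simplicial route is the one that makes the corollary a genuinely shorter and independent proof.
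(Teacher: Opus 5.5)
Your proposal is correct and follows the paper's own proof. You transfer to $h(\mathcal{T})$ and $h(\partial\mathcal{T})$ via \Cref{cor:betke-mcmullen-spec} and \Cref{cor:stapledon-spec}, then invoke the Dehn--Sommerville relations for homology $d$-balls (Novik--Swartz, Theorem 3.1); your explicit check that $\partial\mathcal{T}$ is regular (via \Cref{lem:subdivision-facets}) and unimodular, which \Cref{cor:stapledon-spec} requires, fills in a step the paper leaves implicit.
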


\begin{proof}
    Let $\mc{T}$ be a regular unimodular triangulation of $P$ and let $\Delta = \partial \mc{T}$ denote the induced triangulation of $\partial P$ obtained by restricting $\mc{T}$ to the boundary complex of $P$. Then 
    \begin{align*}
        h_i^\ast(P) - h_{d-i+1}^\ast(P) &= h_i(\mc{T})-h_{d-i+1}(\mc{T}) \\ &= h_i(\Delta) - h_{i-1}(\Delta) \\ &= h_i^\ast(\partial P) - h_{i-1}^\ast(\partial P)
    \end{align*}
    where the first and third equality are simply \Cref{cor:betke-mcmullen-spec} and \Cref{cor:stapledon-spec}, respectively. The second identity follows from Theorem 3.1 in \cite{NovikSwartz09} by observing that $\mc{T}$ is a $\KK$-homology $d$-ball.
\end{proof}

%%%------------------------------ Subsection ----------------------------------------%
\subsection{Properties of coefficient-wise differences in $h^\ast(P)$ and $h^\ast(\partial P)$} \label{subsect:differences} 
For the sake of readability we introduce a shorthand notation:
\begin{align*}
    g^\ast_i(\partial P) \coloneqq h^\ast_i(\partial P) - h^\ast_{i-1}(\partial P)  \text{ for }0\leq i\leq \lfloor\frac{d}{2}\rfloor,
\end{align*}
where $P\subset \RR^d$ is a lattice $d$-polytope. It follows from \Cref{th:ds-lattice-polytopes} that, alternatively, we have
$g^\ast_i(\partial P)=h_i^\ast(P)-h_{d-i+1}^\ast(P)$. 

%A lattice $d$-polytope $P \subset \RR^d$ is a compact $d$-dimensional manifold with boundary, homeomorphic to $\BB^d$. The boundary $\partial P$ is homeomorphic to $\sphere^{d-1}$ and a $(d-1)$-dimensional manifold without boundary. A triangulation $\mc{T}$ of $P$ is a simplicial $d$-ball, while $\partial \mc{T} = \Delta$ is a simplicial $(d-1)$-sphere. For more (co)homology background on these definitions we refer to \cite{KleeNovik16}.

The main bridge we use was kindly communicated to us by Francisco Santos and is the subject of the following lemma:

\begin{lemma}\label{claim:santos}
    Let $P$ be a lattice $d$-polytope, whose boundary $\partial P$ admits a regular triangulation $\Delta$. Then $\Delta$ is polytopal, i.e., $\Delta$ is combinatorially equivalent to the boundary complex of a simplicial polytope $Q$. 
\end{lemma}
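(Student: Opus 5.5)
The plan is to realize $\Delta$ as the link of a vertex in a regular triangulation of $P$, and then read off a polytope from the vertex figure of the lifted polytope. Fix a point $\bs{p}$ in the interior of $P$. It need not be a lattice point, since polytopality is a purely combinatorial statement. Let $\mc{T}_{\bs{p}}$ be the triangulation of $P$ obtained by coning every simplex of $\Delta$ from $\bs{p}$. Its simplices are $\conv(\{\bs{p}\}\cup\sigma)$ and $\sigma$ for $\sigma\in\Delta$, together with $\{\bs{p}\}$. This is a triangulation because every ray from $\bs{p}$ meets $\partial P$ exactly once. In $\mc{T}_{\bs{p}}$ the link of the vertex $\bs{p}$ is exactly $\Delta$.

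The first step is to show that $\mc{T}_{\bs{p}}$ is regular. I will construct a convex piecewise-linear function $h:P\to\RR$ whose domains of linearity are precisely the cells $\conv(\{\bs{p}\}\cup\sigma)$ for maximal $\sigma\in\Delta$, and which is strictly convex across their common walls.

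\begin{enumerate}
\item[(a)] By hypothesis and \Cref{lem:subdivision-facets}, for each facet $F$ the lower envelope $f_F$ of the lifted points $(\bs{b},\omega_\partial(\bs{b}))$, $\bs{b}\in\mc{B}\cap F$, is a convex piecewise-linear function on $F$. Its maximal domains of linearity are the maximal simplices of $\Delta_{|F}$. These functions agree on common faces of facets, since there they are all the lower envelope of the same points. Hence they glue to a function $f$ on $\partial P$.
\item[(b)] Every $\bs{x}\in P$ can be written as $\bs{x}=\bs{p}+t(\bs{y}-\bs{p})$ with $\bs{y}\in\partial P$ and $t\in[0,1]$. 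Set $\tilde f(\bs{x}):=t\,f(\bs{y})$; this is the extension of $f$ that is linear on each cone $\conv(\{\bs{p}\}\cup\sigma)$.
\item[(c)] Let $\rho(\bs{x}):=t$ be the gauge of $P$ centred at $\bs{p}$. It is convex and piecewise linear, its linearity domains are the cones from $\bs{p}$ over the facets of $P$, and it bends strictly across every cone over a ridge of $P$.
\item[(d)] Put $h:=\tilde f+\lambda\rho$ for a large constant $\lambda>0$.
\end{enumerate}

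To verify that $h$ works, I will check local convexity across each wall $\conv(\{\bs{p}\}\cup\tau)$, where $\tau$ is a $(d-2)$-simplex of $\Delta$.

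\begin{itemize}
\item If $\tau$ lies in the relative interior of a facet $F$, then $\rho$ is linear there. Strict convexity of $h$ across the wall then comes from strict convexity of $f_F$ across $\tau$, after coning (homogeneity in $t$ preserves it).
\item If $\tau$ lies in a ridge $F\cap F'$, then $\tilde f$ may fail to be convex across the wall, but only by a bounded amount. There are finitely many such walls, and $\rho$ has a strictly positive bend across each of them. So a sufficiently large $\lambda$ makes $h$ strictly convex there as well.
\end{itemize}

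Since $h$ is piecewise linear and locally strictly convex across every wall of a triangulation of the convex set $P$, it is a convex function whose maximal linearity domains are exactly the cells of $\mc{T}_{\bs{p}}$. It follows that $\mc{T}_{\bs{p}}$ is the regular triangulation induced by the weights $h(\bs{p})=0$ and $h(\bs{b})=f(\bs{b})+\lambda$ on the vertices $\bs{b}$ of $\Delta$.

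The second step is to take the vertex figure. Let $\widehat{P}:=\conv\{(\bs{v},h(\bs{v})):\bs{v}\in\{\bs{p}\}\cup\mc{B}\}\subset\RR^{d+1}$. By regularity, the lower faces of $\widehat{P}$ project onto the simplices of $\mc{T}_{\bs{p}}$, and $\hat{\bs{p}}:=(\bs{p},h(\bs{p}))$ is a vertex of $\widehat{P}$. Every face of $\widehat{P}$ containing $\hat{\bs{p}}$ is a lower face. The reason is that $\bs{p}$ is interior, so every non-vertical supporting hyperplane through $\hat{\bs{p}}$ lies below $\widehat{P}$, and no vertical hyperplane supports $\widehat{P}$ at $\hat{\bs{p}}$ (all other lifted points project to $\partial P$, which surrounds $\bs{p}$).

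Therefore the face lattice of the vertex figure $Q:=\widehat{P}/\hat{\bs{p}}$ is isomorphic to the interval above $\hat{\bs{p}}$ in the face lattice of $\widehat{P}$. That interval is the face poset of $\link_{\mc{T}_{\bs{p}}}(\bs{p})=\Delta$. Since every face of $\Delta$ is a simplex, $Q$ is a simplicial $d$-polytope with $\partial Q\cong\Delta$, which is the claim.

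The main obstacle is the first step, and specifically the ridge walls in (c)–(d). One has to argue cleanly that the possible concavity of $\tilde f$ across cones over ridges is uniformly bounded, while the bend of $\rho$ there is uniformly positive. I would make this precise by comparing the two linear pieces of $\tilde f$ and of $\rho$ on the two adjacent cells at a fixed interior point of the wall, and then using finiteness of the set of walls.
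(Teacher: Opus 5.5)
Your proof is correct, and it has the same skeleton as the paper's argument (due to Santos). Both cone $\Delta$ from an interior point $\bs{p}$, show that the cone $\mc{T}_{\bs{p}}$ is regular, and extract $Q$ from the link of $\bs{p}$. What differs is that you prove by hand what the paper cites from De Loera--Rambau--Santos.

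For regularity, the paper views $\mc{T}_{\bs{p}}$ as the refinement of the pulling subdivision at $\bs{p}$ by the boundary weights, and invokes their Lemma~2.3.26. On the vertices of $\Delta$, your height $h=\tilde f+\lambda\rho$ is, up to positive scaling, exactly the height that lemma uses: $\rho$ gives the pulling subdivision, $\tilde f$ carries $\omega_\partial$, and large $\lambda$ plays the role of small $\epsilon$. Your wall-by-wall check is therefore a proof of that lemma in this special case.

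For the last step, the paper treats $\link_{\mc{T}_{\bs{p}}}(\bs{p})=\Delta$ as a regular triangulation of the totally cyclic vector configuration $\mathcal{A}\setminus\{\bs{p}\}$ (their Lemma~4.2.20). It then uses that such triangulations are polytopal (Lemma~9.5.1), i.e.\ it passes through complete regular fans. Your vertex figure of $\widehat{P}$ at $\hat{\bs{p}}$ produces $Q$ directly and explicitly instead. The paper's route is shorter and places the statement in the general theory of regular fans; yours is self-contained and elementary.

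Two small points need tightening.
\begin{enumerate}
\item Your first wall case should read ``$\tau$ is not contained in any ridge'' rather than ``$\tau$ lies in the relative interior of a facet''. A $(d-2)$-simplex of $\Delta_{|F}$ may touch the relative boundary of $F$ without lying in a ridge. Your argument only uses that both adjacent simplices lie in $F$, so nothing else changes.
\item Interiority of $\bs{p}$ by itself only rules out vertical supporting hyperplanes at $\hat{\bs{p}}$. To rule out upper faces you also need that $\widehat{P}$ is not flat. If $\hat{\bs{p}}$ lay on an upper face, the upper envelope $u$ of $\widehat{P}$ would satisfy $u(\bs{p})=h(\bs{p})$. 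Then the concave function $u-h\ge 0$ would vanish at an interior point of $P$, hence identically. That makes $h$ affine, contradicting its strict convexity across walls.
\end{enumerate}
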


\begin{proof}
    Consider the following auxiliary triangulation $\mc{T}^\prime$ of $P$: add an extra point $\bs{p}$ in the interior of $P$, and define $\mc{T}^\prime$ to be the cone of $\partial \mc{T}$ with apex at $\bs{p}$. $\mc{T}^\prime$ is indeed a regular triangulation of $P$ since one can alternatively describe it as follows: first, take the cone over the boundary complex $\partial P$ with apex $\bs{p}$. This is a pulling refinement of the trivial subdivision of $P$ (see \cite[Section 4.3.4]{DeLoeraRambauSantos2010}). Then, refine this subdivision using the same lifting weights that give $\mc{T}$ as a regular triangulation. That this recovers $\mc{T}^\prime$ and that it produces a regular subdivision is Lemma 2.3.26 in \cite{DeLoeraRambauSantos2010}. 

    Now denote by $\mathcal{A}$ the point configuration underlying  $\mc{T}^\prime$, i.e., $\mathcal{A}$ is the vertex set of $\mc{T}$ together with the additional point $\bs{p}$. Then one notes that $\partial \mc{T}$ equals the link of $\bs{p}$ in $\mc{T}^\prime$, which is  a regular subdivision of $\mathcal{A}\setminus\{ \bs{p}\}$ by \cite[Lemma 4.2.20]{DeLoeraRambauSantos2010}, and that regular triangulations of totally cyclic configurations such as $\mathcal{A}\setminus \{\bs{p}\}$ are polytopal complexes (see \cite[Lemma 9.5.1]{DeLoeraRambauSantos2010}). This proves the claim.
\end{proof}

We are now able to translate some prime results from face enumeration for simplicial manifolds, as the $g$-theorem (see \cite{BilleraLee1981,Stanley1980}) and the Generalized Lower Bound Theorem for simplicial polytopes (see \cite{McMullenWalkup1971, MuraiNevo2013}) to the setting of $h^\ast$-vectors of polytopes and their boundaries. We start by introducing the concept of an $M$-sequence of integers. Given positive integers $\nu$ and $i$, there exists a unique decomposition (also known as \emph{$i$-binomial expansion}) of the form
\begin{align*}
    \nu = \binom{a_i}{i} + \binom{a_{i-1}}{i-1} + \cdots + \binom{a_j}{j}, \hspace{1.5 em} \text{where } a_i > a_{i-1} > \cdots > a_j \geq j > 0.
\end{align*}
Define
\begin{align*}
    \nu^{\langle i\rangle} \coloneqq \binom{a_i + 1}{i+1} + \binom{a_{i-1}+1}{i} + \cdots + \binom{a_j +1}{j+1}.
\end{align*}
A sequence of integers $(1,\nu_1,\nu_2,\dots)$ is an \emph{$M$-sequence} if $0 \leq \nu_{i+1} \leq \nu_i^{\langle i\rangle}$ for all $i$. Equivalently, an $M$-sequence is the Hilbert function as a standard graded Artinian $\KK$-algebra (see e.g., \cite{StanleyGreenBook}).

 %When referring to the $h$-vector of a homology manifold or a simplicial polytope $\mc{K}$ in the following, we always mean the $h$-vector of its boundary complex $\partial \mc{K}$.
\begin{theorem}\label{th:g-theorem-transfer}
    Let $P \subset \RR^d$ be a lattice $d$-polytope, whose boundary complex $\partial P$ admits a regular unimodular triangulation. Then
    \begin{enumerate}
        \item[(i)] $h_j^\ast(\partial P) = h_{d-j}^\ast(\partial P)$ for all $j$,
        \item[(ii)] $1 = h_0^\ast(\partial P) \leq h_1^\ast(\partial P) \leq \cdots \leq h_{\lfloor d/2 \rfloor}^\ast(\partial P)$, and
        \item [(iii)] the sequence $g^\ast(\partial P) \coloneqq (g_0^\ast(\partial P),g_1^\ast(\partial P),\dots,g_{\lfloor d/2\rfloor}^\ast(\partial P))$ is an $M$-sequence.
    \end{enumerate}
\end{theorem}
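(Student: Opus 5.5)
Let $\Delta$ be a regular unimodular triangulation of $\partial P$. The plan is to move from $h^\ast(\partial P)$ to the $h$-vector of a simplicial polytope and then apply the $g$-theorem (\Cref{th:g-theorem}).

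First, \Cref{cor:stapledon-spec} gives $h^\ast_{\partial P}(z) = h_\Delta(z)$, so $h_j^\ast(\partial P) = h_j(\Delta)$ for every $j$. Here $\Delta$ is a $(d-1)$-dimensional simplicial complex, so its $h$-vector is $(h_0(\Delta),\dots,h_d(\Delta))$. Since $h^\ast_{\partial P}$ has degree at most $d-1$, the top coefficients should vanish consistently. Checking this is part of the index bookkeeping below.

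Second, \Cref{claim:santos} applies because $\Delta$ is regular. It says $\Delta$ is combinatorially equivalent to the boundary complex of a simplicial polytope $Q$. Since $\Delta$ triangulates $\partial P \cong \sphere^{d-1}$, $Q$ is a simplicial $d$-polytope. Combinatorial equivalence preserves $f$-vectors, and hence $h$-vectors via \eqref{eq:relation-h-f}. Therefore $h(\Delta) = h(\partial Q)$, which is what the paper means by the $h$-vector of the polytope $Q$.

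Third, I apply \Cref{th:g-theorem} to $Q$. This gives the symmetry $h_j(\partial Q) = h_{d-j}(\partial Q)$ and the monotonicity $1 = h_0 \le h_1 \le \cdots \le h_{\lfloor d/2\rfloor}$. It also gives that $g_j = h_j - h_{j-1}$, for $0\le j\le \lfloor d/2\rfloor$, form an $M$-sequence. Translating through the equalities above, (i), (ii) and (iii) follow at once, since $g_j^\ast(\partial P) = h_j(\Delta)-h_{j-1}(\Delta)$ by definition (with $h_{-1}=0$, so $g_0^\ast=1$).

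The only real obstacle is the index convention, not the mathematics. The $g$-theorem as stated indexes the $h$-vector of a simplicial $d$-polytope as $(h_0,\dots,h_d)$ with $h_j=h_{d-j}$. Meanwhile $h^\ast_{\partial P}$ has degree $\le d-1$ and $\Delta$ has dimension $d-1$, so the genuine Dehn–Sommerville symmetry of a $(d-1)$-sphere reads $h_j = h_{d-j}$ in the convention of \eqref{eq:relation-h-f}, where the sum runs to $d$. I would fix the convention of \eqref{eq:relation-h-f} for $\Delta$ throughout, check against the octahedron example that both sides match, and state (i) in that convention. Everything else is a direct transfer.
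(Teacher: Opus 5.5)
Your proposal is correct and is essentially the paper's proof: identify $h^\ast(\partial P)$ with $h(\Delta)$ via \Cref{cor:stapledon-spec}, realize $\Delta$ as the boundary of a simplicial $d$-polytope via \Cref{claim:santos}, and transfer (i)--(iii) from \Cref{th:g-theorem}. On the bookkeeping point you flagged, the top coefficient does not vanish: $h^\ast_d(\partial P)=h_d(\Delta)=1$ (compare $1+4z+z^2$ for $d=2$), so the remark in the preliminaries that $h^\ast_{\partial P}$ has degree at most $d-1$ is a slip, and (i) with $j=0$ in fact forces degree exactly $d$.
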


\begin{proof}
    Let $\Delta$ be a regular unimodular triangulation of $\partial P$. Due to \Cref{claim:santos}, $\Delta$ is combinatorially equivalent to $\partial \mc{K}$ for a simplicial $d$-polytope $\mc{K}$, which implies $h(\Delta) = h(\partial \mc{K})$. By \Cref{th:g-theorem} and \Cref{cor:stapledon-spec}, all the properties (i)-(iii) are translated to $h^\ast(\partial P)$ via 
    \[
    h(\partial \mc{K}) = h(\Delta) = h^\ast(\partial P).
    \]
\end{proof}

\begin{remark}
    Symmetry of $h(\partial \mc{K})$ holds more generally when $\partial \mc{K}$ is a $(d-1)$-dimensional homology sphere. This is an immediate consequence of Klee's Dehn-Sommerville equations \cite{Klee1964}. Both (i) and (ii) in \Cref{th:g-theorem-transfer} were already (implicitly) stated by Stapledon \cite{Stapledon2009}.
\end{remark}

\Cref{th:g-theorem-transfer} (iii) means that the growth rate between consecutive entries of $h^\ast(\partial P)$ is bounded. Recall that by \Cref{th:g-theorem-transfer}, unimodality of $h^\ast(\partial P)$ implies 
\[
g_j^\ast(\partial P) = h_j^\ast(\partial P) - h_{j-1}(\partial P) \geq 0.
\]
The Dehn-Sommerville equations stated in \Cref{cor:ds-lattice-polytopes-reguni} then directly imply that 
\begin{equation}\label{eq:topheavy}
h_i^\ast(P) \geq h_{d-i+1}^\ast(P), 
\end{equation}
a property also referred to as \emph{top-heaviness}. This has previously been proven  by Athanasiadis \cite{Athanasiadis}. 
The goal of this section is to investigate how the cases of equality in \eqref{eq:topheavy} can be characterized. A treatment of these cases is given by the Generalized Lower Bound Theorem (see \Cref{th:glbt}). We add the following definition:

A simplicial $d$-polytope $\mc{K}$ is called \emph{$(r-1)$-stacked} if there exists a triangulation $\mc{T}$ of $P$ such that $\partial \mc{T} = \partial P$ and $\Skel_{d-r}(\mc{T}) = \Skel_{d-r}(P)$, where $\Skel_{d-r}(P)$ (resp. $\Skel_{d-r}(\mc{T})$) denotes the $(d-r)$-skeleton of $P$ (resp. $\mc{T})$, i.e., the set of faces of $P$ (resp. $\mc{T}$) of dimension at most $d-r$. In other words, $\mc{T}$ extends the triangulation of $\partial P$ to the interior of $P$ without introducing any interior faces of dimension $\leq d-r$. Moreover, by definition, $(r-1)$-stackedness of $\partial P$ implies that $\partial P$ is $\ell$-stacked for all $r \leq \ell \leq d$.

The key ingredient to characterize equality in \eqref{eq:topheavy} is the observation that due to the Dehn-Sommerville relations, we have $h^\ast_r(P) = h^\ast_{d-r+1}(P)$ if and only if $h^\ast_{r-1}(\partial P)=h_r^\ast(\partial P)$, which can be characterized via the generalized lower bound theorem. More precisely, we derive the following characterization:

\begin{theorem}\label{th:glbt-transfer}
    Let $P \subset \RR^d$ be a lattice $d$-polytope, whose boundary complex $\partial P$ admits a regular unimodular triangulation $\Delta$ and let $\mc{K}$ be a simplicial polytope for which $\Delta \cong \partial\mc{K}$. Then, $\mc{K}$ is $(r-1)$-stacked for some $r \leq \lfloor d/2\rfloor$ if and only if
    \[
    h_j^\ast(P) = h_{d-j+1}^\ast(P) \hspace{1.5 em} \text{for all } r \leq j \leq \lfloor d/2\rfloor.
    \]
    In particular, $h_j^\ast(\partial P) = h_{j-1}^\ast(\partial P)$ for $r \leq i \leq d-r+1$ if and only if $\mc{K}$ is $(r-1)$-stacked for some $r \leq \lfloor d/2 \rfloor$. 
\end{theorem}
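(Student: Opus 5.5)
The plan is to reduce everything to the Generalized Lower Bound Theorem (\Cref{th:glbt}) for the simplicial polytope $\mc{K}$, passing through the boundary via the Dehn--Sommerville-type identity of \Cref{th:ds-lattice-polytopes} and the boundary Betke--McMullen statement \Cref{cor:stapledon-spec}. The chain of translations is
\[
h_j^\ast(P)-h^\ast_{d-j+1}(P) = g_j^\ast(\partial P) = h_j(\Delta)-h_{j-1}(\Delta) = h_j(\partial\mc{K})-h_{j-1}(\partial \mc{K}),
\]
valid for all $0\le j\le d$. The first equality is \Cref{th:ds-lattice-polytopes}, which needs no hypothesis. The second is \Cref{cor:stapledon-spec}, using the regular unimodular triangulation $\Delta$. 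The third holds because $\Delta\cong\partial\mc{K}$ combinatorially, and such a $\mc{K}$ exists by \Cref{claim:santos}. Hence the condition $h_j^\ast(P)=h^\ast_{d-j+1}(P)$ for all $r\le j\le\lfloor d/2\rfloor$ is equivalent to $h_{j-1}(\partial\mc{K})=h_j(\partial\mc{K})$ for the same range of $j$.

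Next I use the unimodality in \Cref{th:glbt}(i) (equivalently \Cref{th:g-theorem-transfer}(ii)). The $h$-vector of $\partial\mc{K}$ is weakly increasing up to index $\lfloor d/2\rfloor$. So equality of all consecutive entries between $r-1$ and $\lfloor d/2\rfloor$ is the same as the single equality $h_{r-1}(\partial\mc{K})=h_{\lfloor d/2\rfloor}(\partial\mc{K})$, and in particular it implies $h_{r-1}=h_r$. Conversely, \Cref{th:glbt}(ii) says that $h_r(\partial\mc{K})=h_{r-1}(\partial\mc{K})$ if and only if $\mc{K}$ is $(r-1)$-stacked. I then need the propagation step: if $\mc{K}$ is $(r-1)$-stacked, it is also $(j-1)$-stacked for every $j\ge r$. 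This follows from the definition, since a triangulation with no interior faces of dimension $\le d-r$ has none of dimension $\le d-j$ either; the paper notes this monotonicity right after the definition. Applying \Cref{th:glbt}(ii) with $j$ in place of $r$ gives $h_j=h_{j-1}$ for all $r\le j\le\lfloor d/2\rfloor$. This proves the main equivalence in both directions.

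For the ``in particular'' statement, I combine the above with the symmetry $h_j^\ast(\partial P)=h^\ast_{d-j}(\partial P)$ from \Cref{th:g-theorem-transfer}(i). Equalities $h^\ast_j(\partial P)=h^\ast_{j-1}(\partial P)$ for $r\le j\le\lfloor d/2\rfloor$ reflect to $h^\ast_{d-j}=h^\ast_{d-j+1}$, which covers the indices $j$ from $d-\lfloor d/2\rfloor+1$ up to $d-r+1$. The middle index (when $d$ is odd, $h^\ast_{(d-1)/2}=h^\ast_{(d+1)/2}$) is automatic by symmetry. So the whole range $r\le j\le d-r+1$ is covered, and the equivalence with $(r-1)$-stackedness follows from the previous paragraph.

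The main obstacle is bookkeeping rather than substance. I need to check that the ``$\Delta\cong\partial\mc{K}$'' hypothesis is compatible with how \Cref{th:glbt} indexes the $h$-vector, since $\mc{K}$ is a $d$-polytope and its boundary is $(d-1)$-dimensional with $h$-vector of length $d+1$. I also need to verify the edge cases of the index range, such as $r=1$ and $d$ odd versus even, and handle the slightly inconsistent indices in the statement (the $i$ versus $j$ in the last sentence).
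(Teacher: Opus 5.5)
Your proposal is correct and follows essentially the same route as the paper: you translate $h_j^\ast(P)-h_{d-j+1}^\ast(P)$ into $h_j(\partial\mc{K})-h_{j-1}(\partial\mc{K})$ via \Cref{th:ds-lattice-polytopes}, \Cref{cor:stapledon-spec} and $\Delta\cong\partial\mc{K}$, then apply \Cref{th:glbt}(ii) together with the monotonicity of stackedness. Your explicit derivation of the ``in particular'' clause from the symmetry in \Cref{th:g-theorem-transfer}(i), including the automatic middle equality for odd $d$, fills in a step the paper leaves implicit.
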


\begin{proof}
    We start by noting that, by definition, $(r-1)$-stackedness of $\mc{K}$ implies that $\mc{K}$ is $\ell$-stacked for $r \leq \ell \leq \lfloor d/2 \rfloor$.

    $\Rightarrow$: If $\mc{K}$ is $(r-1)$-stacked, then we get the following chain of identities:
    \begin{align*}
        h_r^\ast(\partial P) &\overset{(1)}{=} h_r(\Delta) \overset{(2)}{=} h_r(\partial \mc{K}) \overset{(3)}{=} h_{r-1}(\partial \mc{K}) \\ &\overset{(4)}{=} h_{r-1}(\Delta) \overset{(5)}{=} h_{r-1}^\ast(\partial P).
    \end{align*}
    The equalities (1) and (5) follow by \Cref{cor:stapledon-spec}, (2) and (4) are due to $\Delta \cong \partial \mc{K}$ and (3) is precisely the GLBT (\Cref{th:glbt}). Applying the Dehn-Sommerville relations in \Cref{th:ds-lattice-polytopes}) and the fact from the beginning of this proof, we obtain 
    \[
    h_j^\ast(P) = h_{d-j+1}^\ast(P) \hspace{1em} \text{ for all } r \leq j \leq \lfloor d/2 \rfloor.
    \]

    $\Leftarrow$: Reversing the argumentation, we arrive at $h_r(\partial \mc{K}) = h_{r-1}(\partial \mc{K})$, which implies that $\mc{K}$ is $(r-1)$-stacked by \Cref{th:glbt}. 
\end{proof}

We note that \Cref{th:glbt-transfer} also shows that the symmetry $h_j^\ast(P) = h_{d-j+1}^\ast(P)$ propagates, i.e., if $h_j^\ast(P) = h_{d-j+1}^\ast(P)$ for some $j$, then $h_k^\ast(P) = h_{d-k+1}^\ast(P)$ for all $j\leq k\leq \lfloor \frac{d}{2}\rfloor$.

We illustrate this behavior with an example.
\begin{example}
    Let $\bs{a} = (N-1,\dots,N-1,N) \in \NN^d$. The 1-row Hermite normal form simplex (HNFS, for short) associated to $\bs{a}$ is defined as
    \[
    S_{\bs{a}} \coloneqq \text{conv}\{0,\bs{e}_1,\dots,\bs{e}_{d-1},\bs{a}\}.
    \]
    Figure \ref{fig:Examples-HNFS} shows realizations of $S_{\bs{a}}$ for low-dimensional choices of $\bs{a}$. A result from \cite{BruckampCaicedoJuhnke} shows that $S_{\bs{a}}$ admits a regular unimodular triangulation $\mc{T}$ if and only if $N \in \{k d, k d + 1\}$ and it is shown that
    \[
    h^\ast_{S_{\bs{a}}}(z)=\begin{cases}
        1+\sum_{j=1}^{d-1}kz^j+(k-1) z^d, \quad&\text{ if }N=kd\\
        1+\sum_{j=1}^{d}kz^j, \quad&\text{ if } N=kd+1.
    \end{cases}
    \]
    E.g., for $d=6,k=2$ and $N=k d + 1 = 13$, we obtain
    \[
    h^\ast_{S_{\bs{a}}}(z) = 1 + 2 \sum_{i=1}^d z^i,
    \]
    so $h^\ast(S_{\bs{a}}) = (1,2,2,2,2,2,2)$, satisfying $h^\ast_i(S_{\bs{a}}) = h^\ast_{6-i+1}(P)$ for $1 \leq i \leq 3$. 
    It is easy to see that for $N\in \{kd,kd+1\}$, the only lattice points on the boundary of the simplex $S_{\bs{a}}$ are its vertices and one additional point in the first case. Hence, the polytopal realizations of the boundary are given by two simplices glued together in the first case (which is $1$-stacked) and a simplex in the second case (which can be interpreted as $0$-stacked). From this we can deduce the above formula for the $h^\ast$-polynomial via \Cref{th:glbt-transfer}, where in the first case, for the last coefficient we additionally use that the normalized volume of $S_{\bs{a}}$ equals $N$.
    \begin{figure}[h]
        \begin{minipage}{0.45\textwidth}
            \centering
            \includegraphics[width=0.8\textwidth]{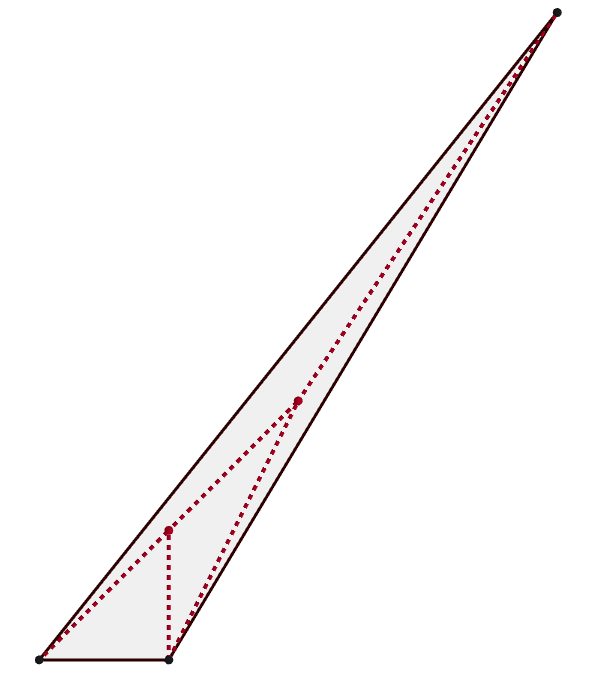}
        \end{minipage}
        \hspace{1em}
        \begin{minipage}{0.45\textwidth}
            \centering
            \includegraphics[width=0.65\textwidth]{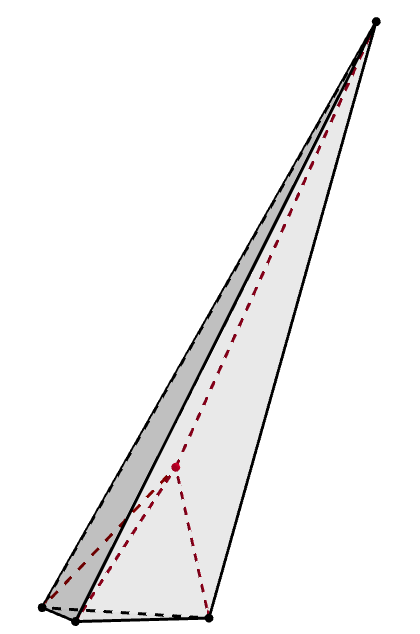}
        \end{minipage}
        \caption{1-row Hermite normal form simplices with $(d,k,N) = (2,2,5)$ and $(d,k,N) = (3,1,4)$.}
        \label{fig:Examples-HNFS}
    \end{figure}
\end{example}

\begin{remark}
    Recently, Bajo and Beck showed that $h^\ast(P) = h^\ast(\partial P)$ if $P$ is reflexive (see Corollary 5.4 in \cite{BajoBeck2023}).  If $\partial P$ additionally admits a regular unimodular triangulation, \Cref{th:glbt-transfer} imposes very strong conditions on the form of $h^\ast(P)$. More precisely, we have the following corollary.
\end{remark}

\begin{corollary}\label{cor:refl}
 Let $P \subset \RR^d$ be a reflexive lattice $d$-polytope, whose boundary complex $\partial P$ admits a regular unimodular triangulation. If  $h_j^\ast(P)=h_{d-j+1}^\ast(P)$ for some $1\leq j\leq$, then 
 \[
 h^\ast_j(P)=h_{j+1}^\ast(P)=\cdots = h_{d-j}^\ast(P)=h_{d-j+1}^\ast(P).
 \]
\end{corollary}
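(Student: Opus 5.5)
The plan is to combine two symmetries: the palindromicity of $h^\ast(P)$ that comes from reflexivity, and the propagation of equalities $h^\ast_k(P)=h^\ast_{d-k+1}(P)$ established in \Cref{th:glbt-transfer}. I read the hypothesis as $1\leq j\leq \lfloor d/2\rfloor$, the range in which \Cref{th:glbt-transfer} applies.

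First, I would recall Hibi's characterization of reflexive polytopes: $h^\ast_P(z)$ is palindromic of degree $d$, that is, $h^\ast_i(P)=h^\ast_{d-i}(P)$ for all $i$. This is the same symmetry behind Bajo–Beck's identity $h^\ast(P)=h^\ast(\partial P)$ mentioned in the preceding remark. Next, I would use the assumption $h^\ast_j(P)=h^\ast_{d-j+1}(P)$. By \Cref{th:ds-lattice-polytopes} it is equivalent to $h^\ast_j(\partial P)=h^\ast_{j-1}(\partial P)$. Since $\partial P$ has a regular unimodular triangulation $\Delta\cong\partial\mc{K}$, \Cref{th:glbt-transfer} then makes $\mc{K}$ $(j-1)$-stacked, and hence gives
\[
h^\ast_k(P)=h^\ast_{d-k+1}(P)\qquad\text{for all } j\leq k\leq \lfloor d/2\rfloor .
\]
This is the propagation statement noted right after that theorem.

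Then I would combine the two symmetries. For each $k$ with $j\leq k\leq\lfloor d/2\rfloor$, palindromicity gives $h^\ast_{d-k+1}(P)=h^\ast_{k-1}(P)$. Together with the display this yields $h^\ast_k(P)=h^\ast_{k-1}(P)$ for all such $k$, so
\[
h^\ast_{j-1}(P)=h^\ast_j(P)=\cdots=h^\ast_{\lfloor d/2\rfloor}(P).
\]
Applying $h^\ast_i=h^\ast_{d-i}$ to this chain reflects it onto the indices $d-\lfloor d/2\rfloor,\dots,d-j+1$. When $d$ is odd, the two middle entries $h^\ast_{\lfloor d/2\rfloor}$ and $h^\ast_{\lceil d/2\rceil}$ are equal by palindromicity, so the two halves of the chain join. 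This gives $h^\ast_j(P)=h^\ast_{j+1}(P)=\cdots=h^\ast_{d-j}(P)=h^\ast_{d-j+1}(P)$, and in fact slightly more, since $h^\ast_{j-1}(P)$ is also equal to these values.

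The main point needing care is the index bookkeeping at the middle when $d$ is odd, together with confirming that the range of $j$ matches the hypotheses of \Cref{th:glbt-transfer}. No genuinely hard step is expected, because the geometric input (GLBT and polytopality via \Cref{claim:santos}) is already packaged in that theorem.
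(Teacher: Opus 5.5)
Your proposal is correct and follows essentially the paper's argument. Both combine Hibi's palindromicity $h^\ast_i(P)=h^\ast_{d-i}(P)$ with the propagation $h^\ast_k(P)=h^\ast_{d-k+1}(P)$ for $j\leq k\leq\lfloor d/2\rfloor$ from \Cref{th:glbt-transfer}; the only difference is bookkeeping: the paper alternates $h^\ast_k(P)=h^\ast_{d-k}(P)=h^\ast_{k+1}(P)$ directly, whereas you first get $h^\ast_k(P)=h^\ast_{k-1}(P)$ and then reflect, which makes the middle index for odd $d$ explicit.
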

Intuitively, this corollary states that if a reflexive lattice polytope such that its boundary admits a regular unimodular triangulation attains equality in \eqref{eq:topheavy} for some $j$, then its $h^\ast$-vector stays constant between $h_j^\ast(P)$ and $h_{d-j+1}^\ast(P)$.

\begin{proof}
The statement directly follows from the characterization of reflexivity, due to Hibi \cite{Hibi}. Namely, $P$ is reflexive, if and only if $h_j^\ast(P)=h_{d-j}^\ast (P)$ for $0\leq j\leq d$. This implies
\[
 h^\ast_j(P)= h_{d-j}^\ast(P)= h_{j+1}^\ast(P)=h_{d-j-1}^\ast(P)= h_{j+1}^\ast(P)=\cdots,
\]
where the equalities follow from the alternating use of reflexivity and the assumption combined with \Cref{th:glbt-transfer}. 
\end{proof}

%%%------------------------------ Subsection ----------------------------------------%
\subsection{Upper and lower bounds on $g_i^\ast(P)$} In the final section, we derive an upper and lower bound for the differences $g_i^\ast(P) = h_i^\ast(P) - h_{d-i+1}^\ast(P)$, which hold under the assumption that $\partial P$ has a regular unimodular triangulation $\Delta$. One the one hand, the upper bound is established by a transfer of the $g$-theorem/Macaulay growth from boundary triangulations to asymmetry of the $h^\ast$-vector of $P$. On the other hand, the lower bound holds if $\Delta$ is assumed to be balanced, which strengthens the GLBT to some normalized inequalities.

The upper bound is given in terms of the boundary $h^\ast$-vector, more precisely by $h_1^\ast(\partial P)$:
\begin{proposition}\label{prop:upper-bound}
     Let $P \subset \RR^d$ be a lattice $d$-polytope, whose boundary complex $\partial P$ admits a regular unimodular triangulation $\Delta$. Then for every $1 \leq i \leq \lfloor d/2 \rfloor$,
    \[
    0 \leq g_i^\ast(P)=h_i^\ast(P) - h_{d-i+1}^\ast(P) \leq \binom{h_1^\ast(\partial P) + i-2}{i}.
    \]
    More generally, for $1 \leq k \leq \ell \leq \lfloor d/2 \rfloor$,
    \[
    0 \leq \sum_{i=\ell}^k (h_i^\ast(P) - h_{d-i+1}^\ast(P)) \leq \binom{h_1^\ast(\partial P) + k-1}{k} - \binom{h_1^\ast(\partial P) + \ell -2}{\ell-1}.
    \]
\end{proposition}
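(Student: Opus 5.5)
By \Cref{th:ds-lattice-polytopes}, $g_i^\ast(P)=h_i^\ast(P)-h_{d-i+1}^\ast(P)=h_i^\ast(\partial P)-h_{i-1}^\ast(\partial P)=g_i^\ast(\partial P)$, so both inequalities are statements about the boundary $g^\ast$-vector alone. By \Cref{th:g-theorem-transfer}, the sequence $(1,g_1^\ast(\partial P),\dots,g_{\lfloor d/2\rfloor}^\ast(\partial P))$ is an $M$-sequence. Part (ii) of that theorem gives the lower bound $g_i^\ast\geq 0$ immediately.

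For the upper bound we use Macaulay's theorem. An $M$-sequence $(1,\nu_1,\nu_2,\dots)$ is the Hilbert function of a standard graded Artinian algebra generated by $\nu_1$ elements of degree one. Hence $\nu_i$ is at most the number of degree-$i$ monomials in $\nu_1$ variables, namely $\binom{\nu_1+i-1}{i}$. One can also see this by iterating $\nu_{i+1}\le\nu_i^{\langle i\rangle}$, using that $x\mapsto x^{\langle i\rangle}$ is monotone and $\binom{\nu_1+i-1}{i}^{\langle i\rangle}=\binom{\nu_1+i}{i+1}$. Here $\nu_1=g_1^\ast(\partial P)=h_1^\ast(\partial P)-1$, so $\nu_i\le\binom{h_1^\ast(\partial P)+i-2}{i}$, which is the first claim.

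For the summed version, telescoping gives
\[
\sum_{i=\ell}^k g_i^\ast(\partial P)=h_k^\ast(\partial P)-h_{\ell-1}^\ast(\partial P).
\]
Nonnegativity follows from the unimodality in \Cref{th:g-theorem-transfer}(ii). For the upper bound, summing the termwise bounds and using the hockey-stick identity $\sum_{i=0}^{k}\binom{n+i-2}{i}=\binom{n+k-1}{k}$ with $n=h_1^\ast(\partial P)$ gives
\[
\sum_{i=\ell}^k \binom{n+i-2}{i}=\binom{n+k-1}{k}-\binom{n+\ell-2}{\ell-1}.
\]
The statement writes $k\le\ell$, but it is clearly meant as $\ell\le k$.

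The main obstacle is only bookkeeping. We must check that the $M$-sequence indexing in \Cref{th:g-theorem-transfer}(iii) starts at $g_0^\ast=1$ with $g_1^\ast=h_1^\ast(\partial P)-1$ variables. We must also handle the degenerate case $h_1^\ast(\partial P)=1$, where the $M$-sequence is $(1,0,0,\dots)$ and all bounds read $0$, consistent with $\binom{i-1}{i}=0$.
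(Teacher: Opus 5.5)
Your proposal is correct and follows essentially the same route as the paper. You pass to the boundary via \Cref{th:ds-lattice-polytopes}, use the $M$-sequence property from \Cref{th:g-theorem-transfer} with Macaulay's bound $g_i^\ast(\partial P)\le\binom{g_1^\ast(\partial P)+i-1}{i}$ where $g_1^\ast(\partial P)=h_1^\ast(\partial P)-1$, and sum with the hockey-stick identity. You are also right that the index condition should read $\ell\le k$, and your remarks on nonnegativity, on the hockey-stick step being an equality, and on the case $h_1^\ast(\partial P)=1$ are valid additions.
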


\begin{proof}
    As $g_i^\ast(\partial P)$ is an M-sequence according to \Cref{th:g-theorem-transfer}, Macaulay's theorem \cite{Macaulay1927} gives 
    \begin{align}\label{eq:Upper-Bound-1}
        h_i^\ast(P)-h_{d-i+1}^\ast(P) = h_i^\ast(\partial P) - h_{i-1}^\ast(\partial P) \leq \binom{g_1^\ast(\partial P) +i-1}{i}.
    \end{align}
    Inserting $g_1^\ast(\partial P) = h_1^\ast(\partial P) -1$ shows the upper bound.

    Summing over the left-hand side of (\ref{eq:Upper-Bound-1}) leads to
    \begin{align*}
        \sum_{i=\ell}^k (h_i^\ast(P)-h_{d-i+1}^\ast(P)) &\leq \sum_{i=\ell}^k \binom{h_1^\ast(\partial P) +i-2}{i} \\ &\overset{(*)}{\leq} \binom{h_1^\ast(\partial P)+k-1}{k} - \binom{h_1^\ast(\partial P) + \ell -2}{\ell-1}
    \end{align*}
    for $1 \leq \ell \leq k \leq \lfloor d/2 \rfloor$. Note that in $(*)$, we used the Hockey-Stick identity for binomial coefficients \cite{Jones1996}. 
\end{proof}

Before formulating the lower bound, we need the following definition. A $d$-dimensional simplicial complex $\mc{K}$ is called \emph{balanced} if its 1-skeleton (viewed as a graph) admits a proper $(d+1)$-coloring. A standard example is the barycentric subdivision of a simplicial complex. For further reading we refer to \cite[Section 5.1]{KleeNovik16}. 

The lower bound is expressed in terms of the coefficients of $h^\ast(\partial P)$ as well:
\begin{proposition}\label{prop:lower-bound}
    Let $P \subset \RR^d$ be a lattice $d$-polytope, whose boundary complex $\partial P$ admits a regular unimodular triangulation $\Delta$. 
    If, in addition, $\Delta$ is balanced, then
    \[
    h_i^\ast(P) - h_{d-i+1}^\ast(P) \geq \frac{d-2i+1}{i} h_{i-1}^\ast(\partial P).
    \]
\end{proposition}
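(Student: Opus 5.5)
The plan is to reduce the inequality to a statement about the $h$-vector of the balanced boundary triangulation $\Delta$, and then invoke the balanced analogue of the lower bound inequalities. The three ingredients are the Dehn--Sommerville relations of \Cref{th:ds-lattice-polytopes}, \Cref{cor:stapledon-spec}, and a known lower bound for balanced spheres.

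First, by \Cref{th:ds-lattice-polytopes}, for $1\leq i\leq \lfloor d/2\rfloor$ we have
\[
h_i^\ast(P)-h_{d-i+1}^\ast(P)=h_i^\ast(\partial P)-h_{i-1}^\ast(\partial P).
\]
By \Cref{cor:stapledon-spec}, the regular unimodular triangulation $\Delta$ of $\partial P$ satisfies $h^\ast(\partial P)=h(\Delta)$. It therefore suffices to prove
\[
h_i(\Delta)-h_{i-1}(\Delta)\geq \frac{d-2i+1}{i}\,h_{i-1}(\Delta).
\]
Rearranged, this is
\[
i\,h_i(\Delta)\geq (d-i+1)\,h_{i-1}(\Delta).
\]
Since $\binom{d}{i}/\binom{d}{i-1}=\frac{d-i+1}{i}$, this is equivalent to the normalized monotonicity
\[
\frac{h_i(\Delta)}{\binom{d}{i}}\geq \frac{h_{i-1}(\Delta)}{\binom{d}{i-1}}.
\]

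Second, $\Delta$ is a $(d-1)$-dimensional simplicial sphere. By \Cref{claim:santos} it is even combinatorially a simplicial polytope boundary, and it is balanced by assumption. Its facets have $d$ vertices, so a proper coloring uses $d$ colors, matching the binomial normalization $\binom{d}{i}$. For balanced homology spheres (indeed balanced homology manifolds), the balanced lower bound theorem of Klee and Novik (see \cite[Section 5.1]{KleeNovik16} and the references there) gives exactly the normalized inequality above for $1\leq i\leq \lfloor d/2\rfloor$. Substituting back through $h(\Delta)=h^\ast(\partial P)$ and the Dehn--Sommerville relation yields the claim.

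The main obstacle is the second step: correctly invoking the balanced GLBT with the right normalization. The required facts are that the relevant dimension is that of the $(d-1)$-sphere $\Delta$, so the normalization is by $\binom{d}{i}$, and that the inequality holds in the range $i\leq \lfloor d/2\rfloor$. If one wanted a self-contained argument instead, I would try the standard route. Take a colored linear system of parameters $\theta_1,\dots,\theta_d$ given by the color classes, which exists by Stanley's result for balanced Cohen--Macaulay complexes. Then use a weak Lefschetz-type injectivity from degree $i-1$ to degree $i$ in the colored Artinian reduction, obtained by averaging over the $d$ colors, in the spirit of Juhnke-Kubitzke--Murai. The injectivity is the delicate part; for the proposal I would simply cite it.
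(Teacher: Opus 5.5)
Your proposal is correct and is essentially the paper's own proof: you use \Cref{th:ds-lattice-polytopes} and \Cref{cor:stapledon-spec} to reduce the claim to $i\,h_i(\Delta)\geq (d-i+1)\,h_{i-1}(\Delta)$, i.e.\ $h_{i-1}(\Delta)/\binom{d}{i-1}\leq h_i(\Delta)/\binom{d}{i}$ for $1\leq i\leq\lfloor d/2\rfloor$, and then cite the balanced generalized lower bound inequality. One small caution on that citation: the full normalized chain is the theorem of Juhnke-Kubitzke and Murai for balanced simplicial polytopes, settling a conjecture of Klee and Novik, who proved the first nontrivial case $2h_2\geq(d-1)h_1$ for balanced normal pseudomanifolds; so it is the polytopality supplied by \Cref{claim:santos} that justifies this step, and your parenthetical extension to balanced homology manifolds overreaches and is not needed.
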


\begin{proof}
    Under the additional assumption of $\Delta$ being balanced, the Generalized Lower Bound Theorem can be  strengthened to the normalized inequalities
    \[
    \frac{h_0(\Delta)}{\binom{d}{0}} \leq \frac{h_1(\Delta)}{\binom{d}{1}} \leq \cdots \leq \frac{h_i(\Delta)}{\binom{d}{i}} \leq \cdots \leq \frac{h_{\lfloor d/2 \rfloor}(\Delta)}{\binom{d}{\lfloor d/2 \rfloor}},
    \]
    (see \cite{MuraiJuhnke,KleeNovik})
    or equivalently,
    \[
    \Tilde{g}_i(\Delta) \coloneqq i h_i(\Delta) - (d-i+1)h_{i-1}(\Delta) \geq 0
    \]
    for $1\leq i\leq \lfloor \frac{d}{2}\rfloor$. 
    Comparing with $g_i^\ast(P) = h_i^\ast(P) - h_{d-i+1}^\ast(P) = h_i(\Delta) - h_{i-1}(\Delta)$, we obtain
    \[
    \Tilde{g}_i(\Delta) = ig_i^\ast(P) - (d-2i+1)h_{i-1}(\Delta),
    \]
    which leads to
    \begin{align*}
        h_i^\ast(P) - h_{d-i+1}^\ast(P) \geq \frac{d-2i+1}{i}h_{i-1}(\Delta) = \frac{d-2i+1}{i}h^\ast_{i-1}(\partial P). 
    \end{align*}
\end{proof}

%----------------------- Acknowledgments -----------------------------------------%
\noindent \textbf{Acknowledgments.} We would like to thank Francisco Santos for clearing up some confusion which occurred to us when connecting regular triangulations to polytopality. We are also grateful to our dear colleague Jhon Caicedo, who provided us with his code for generating examples of 1-row Hermite normal form simplices. We would also like to thank Matthias Beck who suggested to investigate the Ehrhart theory of polytope boundaries in the first place. This work was supported by the German Research Foundation (DFG) via SPP 2458 \emph{Combinatorial Synergies}, project number 539849618.

\bibliographystyle{plain}
\bibliography{references.bib}

\end{document}